\documentclass[11pt,reqno]{amsart}

\usepackage[T1]{fontenc}
\usepackage[utf8]{inputenc}
\usepackage{lmodern}
\usepackage[a4paper,margin=32mm]{geometry}
\usepackage{microtype}

\usepackage{amsmath,amssymb,mathtools}
\usepackage{mathrsfs}
\usepackage{bm}
\usepackage{tikz-cd}
\tikzcdset{
  scale cd/.style={
    every label/.append style={scale=#1},
    cells={nodes={scale=#1}}
  }
}

\usepackage{booktabs}
\usepackage{enumitem}
\usepackage{xcolor}
\usepackage{aliascnt}
\definecolor{journalblue}{RGB}{32,72,120}

\usepackage[
  colorlinks=true,
  linkcolor=journalblue,
  citecolor=journalblue,
  urlcolor=journalblue,
  pdfusetitle
]{hyperref}
\usepackage[nameinlink,capitalise,noabbrev]{cleveref}
\crefalias{subsection}{section}
\crefalias{subsubsection}{section}

\setlist{itemsep=2pt,topsep=4pt}
\allowdisplaybreaks
\numberwithin{equation}{section}

\newtheorem{theorem}{Theorem}[section]

\newaliascnt{lemma}{theorem}

\aliascntresetthe{lemma}

\newaliascnt{proposition}{theorem}
\newtheorem{proposition}[proposition]{Proposition}
\aliascntresetthe{proposition}

\newaliascnt{corollary}{theorem}

\aliascntresetthe{corollary}

\theoremstyle{definition}
\newaliascnt{definition}{theorem}
\newtheorem{definition}[definition]{Definition}
\aliascntresetthe{definition}

\newaliascnt{example}{theorem}

\aliascntresetthe{example}

\newaliascnt{assumption}{theorem}

\aliascntresetthe{assumption}

\newaliascnt{notation}{theorem}
\newtheorem{notation}[notation]{Notation}
\aliascntresetthe{notation}

\theoremstyle{remark}
\newaliascnt{remark}{theorem}
\newtheorem{remark}[remark]{Remark}
\aliascntresetthe{remark}

\crefname{theorem}{theorem}{theorems}
\Crefname{theorem}{Theorem}{Theorems}
\crefname{lemma}{lemma}{lemmas}
\Crefname{lemma}{Lemma}{Lemmas}
\crefname{proposition}{proposition}{propositions}
\Crefname{proposition}{Proposition}{Propositions}
\crefname{corollary}{corollary}{corollaries}
\Crefname{corollary}{Corollary}{Corollaries}
\crefname{definition}{definition}{definitions}
\Crefname{definition}{Definition}{Definitions}
\crefname{example}{example}{examples}
\Crefname{example}{Example}{Examples}
\crefname{assumption}{assumption}{assumptions}
\Crefname{assumption}{Assumption}{Assumptions}
\crefname{notation}{notation}{notations}
\Crefname{notation}{Notation}{Notations}
\crefname{remark}{remark}{remarks}
\Crefname{remark}{Remark}{Remarks}

\title[Weak Lie 3-groups, 2-gerbes and T-duality]{Weak Lie 3-groups, 2-gerbes over torus fibrations of type $F_1$, and T-duality}

\author{Roberto Tellez-Dominguez}
\address{%
  Department of Mathematics, CEU San Pablo University, Madrid, Spain
}
\email{rtellezd46@gmail.com}

\date{September 3, 2026}

\subjclass[2020]{Primary 53C08; Secondary 18N20, 18G45, 55R10, 81T30}
\keywords{weak Lie 3-groups, gerbes, 2-gerbes, T-duality, torus fibrations, dimensional reduction}

\begin{document}

\begin{abstract}
We construct a weak Lie 3-group $T_2\mathbb B_n^{F_1}$ from the 2-category of gerbes over $\mathbb R^n/\mathbb Z^n$ and the $\mathbb R^n/\mathbb Z^n$-action on it by pullback along translations. We also construct a homotopy equivalence between $T_2\mathbb B_n^{F_1}$ and a different Lie 3-group $T_2\mathbb D_n^{F_1}$, which admits a dimensional reduction to the homotopy equivalence of Lie 2-groups introduced by Nikolaus and Waldorf to model half-geometric T-duality. This is a first step towards establishing a higher form of T-duality for 2-gerbes over torus fibrations, relevant to supergravity and M-theory. 
\end{abstract}

\maketitle

\tableofcontents

\section{Introduction}\label{sec:introduction}

A common string-theoretic background is given by a principal $\mathbb R^n/\mathbb Z^n$-fibration $P \rightarrow X$ with a gerbe $\mathcal{L} \rightarrow P$ over it \cite{Murray:9407015,Bouwknegt:2003vb,Bouwknegt:2003zg,Bunke:2005um}. Here a gerbe is a geometric object constructed by gluing along transition $U(1)$-bundles, and classified by $H^3(P,\mathbb Z)$. In theories such as 11-dimensional supergravity or M-theory, $\mathcal{L}$ is expected to arise from transgression of a 2-gerbe over a principal $\mathbb R^{n+1}/\mathbb Z^{n+1}$-fibration $P \times_X L \rightarrow X$ \cite{Breen:1994aa,Fiorenza:2012mr,Fiorenza:2018ekd}. This 2-gerbe is a geometric object constructed by gluing along transition gerbes, and classified by $H^4(P,\mathbb Z)$. The purpose of this article is to provide the algebraic structures that control the gluing data determining both 2-gerbes over torus fibrations, and a transformation of these lifting T-duality of gerbes along transgression. This suggests the existence of an associated duality for the corresponding higher-dimensional field theories, in the style of U-duality \cite{Hull:1994ys} or other forms of higher T-duality \cite{Fiorenza:2018ekd,Chatzistavrakidis:2021buscher}.

The study of T-duality started with the observation that, with the right choices of background fields, string-theoretic backgrounds $(P,\mathcal{L})$ and $(\hat{P},\widehat{\mathcal{L}})$ that are non-homeomorphic may nevertheless lead to equivalent quantum field theories. Topological T-duality focuses on the study of questions that are independent of the fields, such as which backgrounds $(P,\mathcal{L})$ admit a corresponding T-dual $(\hat{P},\widehat{\mathcal{L}})$, how many non-homeomorphic T-duals may exist, or how these may be constructed \cite{Bouwknegt:2003vb,Bunke:2005sn,Bunke:2005um}. One of the first results was the following. The fibration $\mathbb R^n/\mathbb Z^n \rightarrow P \rightarrow X$ determines, via the Serre spectral sequence, a filtration of $H^3(P,\mathbb Z)$ of the form $\pi^{\ast}H^3(X,\mathbb Z) = F_3 \subset F_2 \subset F_1 \subset F_0 = H^3(P,\mathbb Z)$. It turns out that a T-dual exists if and only if the class of the gerbe $\mathcal{L} \rightarrow P$ in $H^3(P,\mathbb Z)$ lies in the $F_2$ stage of this filtration. However, as noted in \cite{Hull:2006qs}, a gerbe whose class lies in the $F_1$ stage of the filtration can be locally identified with a gerbe whose class lies in the $F_2$ stage. Therefore, it is to be expected that local T-duals may be defined which, while not gluing to a proper global T-dual, could glue into a different kind of geometric object. This object has been modelled in terms of noncommutative geometry \cite{Mathai:2004qc,Mathai:2004qq,Mathai:2005fd} and, more importantly for this paper, in terms of Lie 2-groups.

A Lie 2-group is a smooth, categorified group. It is the algebraic structure playing the role of the structure group of fibrations whose gluing data has itself a higher level of symmetries. In our setting, this is the case for $(P,\mathcal{L})$, whose gluing data includes $U(1)$-bundles over the torus $\mathbb R^n/\mathbb Z^n$. Nikolaus and Waldorf \cite{Nikolaus:2018qop} proved that there is a commutative diagram of Lie 2-groups of the form
    \begin{equation}\label{eq:tdualspan}
    \begin{tikzcd}
        T\mathbb B_n^{F_2} \ar[d] & T\mathbb D_n^{F_2} \ar[l] \ar[d] \ar[r] & T\mathbb B_n^{F_2} \\ 
        T\mathbb B_n^{F_1} & T\mathbb D_n^{F_1} \ar[l] & 
    \end{tikzcd},
    \end{equation}
    and with the following properties.
    \begin{enumerate}
        \item\label{it:intro1} The underlying category of $T\mathbb B_n^{F_1}$ is $\mathbb R^n/\mathbb Z^n \times L(\mathbb R^n/\mathbb Z^n)$, where $L(\mathbb R^n/\mathbb Z^n)$ is the category of $U(1)$-bundles over $\mathbb R^n/\mathbb Z^n$. The 2-group structure is a semidirect product of addition in $\mathbb R^n/\mathbb Z^n$, tensor product in $L(\mathbb R^n/\mathbb Z^n)$, and action of $\mathbb R^n/\mathbb Z^n$ on $L(\mathbb R^n/\mathbb Z^n)$ by pull-back along translations. This implies $T\mathbb B_n^{F_1}$-bundles over $X$ are in one-to-one correspondence with backgrounds $(P,\mathcal{L})$ over $X$ of type $F_1$.
        \item\label{it:intro1.5} $T\mathbb B_n^{F_2}$ is the full sub-2-group of $T\mathbb B_n^{F_1}$ with set of objects $\mathbb R^n/\mathbb Z^n \times \{ 1 \}$. This implies $T\mathbb B_n^{F_2}$-bundles are in one-to-one correspondence with backgrounds $(P,\mathcal{L})$ of type $F_2$.
        \item\label{it:intro2} Homomorphisms $T\mathbb B_n^{F_i} \leftarrow T\mathbb D_n^{F_i}$ are homotopy equivalences for $i=1, \,2$. In particular, every $T\mathbb B_n^{F_i}$-bundle can be lifted to a $T\mathbb D_n^{F_i}$-bundle.
        \item\label{it:intro3} Two backgrounds $(P,\mathcal{L})$ and $(\hat{P},\widehat{\mathcal{L}})$ are T-dual if and only if they arise from the same $T\mathbb D_n^{F_2}$-bundle through the upper horizontal arrows.
    \end{enumerate}
    Thus, given a background $(P,\mathcal{L})$ of type $F_1$, one can lift it by \ref{it:intro2} to a $T\mathbb D_n^{F_1}$-bundle, and interpret the result by \ref{it:intro3} as the geometric object containing the data of all its local T-duals. Later work has also managed to model the transformation of string-theoretic fields by considering connections on these bundles \cite{Kim:2022opr,Waldorf:2022tib}.

    \subsection{Main result}

    The main result of this article is a higher categorification of the results in \cite{Nikolaus:2018qop}, related to 2-gerbes over $\mathbb R^{n+1}/\mathbb Z^{n+1}$-fibrations $P \rightarrow X$. It requires the introduction of Lie 3-groups, in order to account for the higher level of symmetries present in 2-gerbes. To understand the notation, note that the Serre spectral sequence of $P$ also induces a filtration $\pi^{\ast}H^4(X,\mathbb Z) = F_4 \subset F_3 \subset  F_2 \subset F_1 \subset F_0 = H^4(P,\mathbb Z)$, which can be used to classify 2-gerbes over $P$.
    \begin{theorem}\label{th:mainintro}
        There exist Lie 3-groups $T_2\mathbb B_{n+1}^{F_1}$, $T_2\mathbb D_{n+1}^{F_1}$, $T_2\mathbb B_{n+1}^{F_2}$, $T_2\mathbb D_{n+1}^{F_2}$ fitting in a commutative diagram
        \begin{equation}
            \begin{tikzcd}
        && {T \mathbb B_n^{F_2}} & {T \mathbb D_n^{F_2}} \\
        {T_2\mathbb B_{n+1}^{F_2}} & {T_2\mathbb D_{n+1}^{F_2}} & {T \mathbb B_n^{F_1}} & {T \mathbb D_n^{F_1}} \\
        {T_2\mathbb B_{n+1}^{F_1}} & {T_2\mathbb D_{n+1}^{F_1}}
        \arrow[from=1-3, to=2-1]
        \arrow[from=1-3, to=2-3]
        \arrow[from=1-4, to=1-3]
        \arrow[from=1-4, to=2-2]
        \arrow[from=1-4, to=2-4]
        \arrow[from=2-1, to=3-1]
        \arrow[from=2-2, to=2-1]
        \arrow[from=2-2, to=3-2]
        \arrow[from=2-3, to=3-1]
        \arrow[from=2-4, to=2-3]
        \arrow[from=2-4, to=3-2]
        \arrow[from=3-2, to=3-1]
    \end{tikzcd}
        \end{equation}
        with the following properties.
        \begin{enumerate}
            \item\label{it:main1} The underlying 2-category of $T_2\mathbb B_{n+1}^{F_1}$ is equivalent to $\mathbb R^{n+1}/\mathbb Z^{n+1} \times \mathcal{G}(\mathbb R^{n+1}/\mathbb Z^{n+1})$, for $\mathcal{G}(\mathbb R^{n+1}/\mathbb Z^{n+1})$ the 2-category of gerbes over $\mathbb R^{n+1}/\mathbb Z^{n+1}$. The 3-group structure is a semidirect product of addition in $\mathbb R^{n+1}/\mathbb Z^{n+1}$, tensor product in $\mathcal{G}(\mathbb R^{n+1}/\mathbb Z^{n+1})$, and action of $\mathbb R^{n+1}/\mathbb Z^{n+1}$ on $\mathcal{G}(\mathbb R^{n+1}/\mathbb Z^{n+1})$ by pull-back along translations. 
            \item\label{it:main2} $T_2\mathbb B_{n+1}^{F_2}$ is the full sub-3-group of $T_2\mathbb B_{n+1}^{F_1}$ with set of objects $\mathbb R^{n+1}/\mathbb Z^{n+1} \times \{ 1 \}$.
            \item\label{it:main3} Homomorphisms $T_2\mathbb B_{n+1}^{F_i} \leftarrow T_2\mathbb D_{n+1}^{F_i}$ are homotopy equivalences for $i=1, \,2$.
            \item\label{it:main4} Homomorphisms $T \mathbb B_n^{F_i} \rightarrow T_2 \mathbb B_{n+1}^{F_i}$ are injective.
        \end{enumerate}
    \end{theorem}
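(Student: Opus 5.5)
The overall strategy is to build everything from an explicit strict model and then relate the pieces by homotopy-group computations. The key steps are as follows.

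\textbf{Step 1: a strict model for the gerbe 2-group.} First I would replace the 2-category $\mathcal{G}(\mathbb R^{n+1}/\mathbb Z^{n+1})$ by a strict model suited to translations. The classification $H^3(T^{n+1},\mathbb Z)\cong \Lambda^3\mathbb Z^{n+1}$ suggests using, for each antisymmetric integer 3-tensor $C$, a standard gerbe $\mathcal{L}_C$ over $\mathbb R^{n+1}$ with $\mathbb Z^{n+1}$-equivariant descent data. Descent is given by line bundles $\mathcal{P}_m$, $m\in\mathbb Z^{n+1}$, built from the Nikolaus--Waldorf bundles used for $T\mathbb B_n^{F_1}$. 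Pullback along a translation $a\in\mathbb R^{n+1}$ then changes the descent data only by an explicit 1-morphism built from $C(a,-,-)$. This gives a semistrict (Gray-type, or crossed 2-module style) 3-group structure.

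\textbf{Step 2: defining the four 3-groups and checking the properties.}

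\begin{itemize}
\item I define $T_2\mathbb B_{n+1}^{F_1}$ as the semidirect product of $\mathbb R^{n+1}$ with this model, quotiented by the lattice.
\item Property \ref{it:main1} follows by comparing with the pullback action on $\mathcal{G}$. The comparison is an equivalence because every gerbe is isomorphic to some $\mathcal{L}_C\otimes(\text{flat})$, with automorphisms $L(T^{n+1})$.
\item $T_2\mathbb B_{n+1}^{F_2}$ is the full sub-3-group on objects $(a,1)$, which gives \ref{it:main2}.
\item For the D-versions I imitate $T\mathbb D_n^{F_i}$: enlarge the object set to include dual coordinates $\hat a\in\mathbb R^{n+1}$ and the 3-form data. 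Translations then act strictly, and the map to $T_2\mathbb B$ is the projection forgetting dual data.
\item For injectivity \ref{it:main4}, I embed $T\mathbb B_n^{F_i}$ by sending $(a,P)$ to $(a,0)$ together with the gerbe $\mathrm{pr}^*P\cup \theta_{n+1}$, i.e. the cup product with the last circle coordinate. This is a dimensional "un-reduction": a line bundle on $T^n$ becomes a gerbe on $T^{n+1}$. Injectivity on objects and morphisms is checked directly from the explicit formulas.
\item Commutativity of the diagram then reduces to checking that the un-reduction intertwines the projections $\mathbb D\to\mathbb B$, which holds by construction.
\end{itemize}

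\textbf{Step 3: homotopy equivalence \ref{it:main3}.} I compute $\pi_0,\pi_1,\pi_2,\pi_3$ of both sides as Lie groups and show the map induces isomorphisms; for Lie 3-groups this is the weak-equivalence criterion.

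\begin{itemize}
\item On the $\mathbb B$ side:
  \begin{itemize}
  \item $\pi_0=T^{n+1}\times H^3(T^{n+1})$ (restricted to the $F_i$ part);
  \item $\pi_1=H^2(T^{n+1},\mathbb Z)\times$ flat data;
  \item $\pi_2=H^1$ plus $U(1)$-valued functions modulo constants;
  \item $\pi_3=U(1)$.
  \end{itemize}
\item On the $\mathbb D$ side, the extra dual coordinates $\hat a$ are contractible directions that exactly absorb the discrete data. This uses the same long exact sequence argument as in Nikolaus--Waldorf, now carried out one degree higher.
\end{itemize}

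\textbf{Main obstacle.} I expect the hardest part to be Step 1 together with coherence. Pullback along translations acts on gerbes only weakly, so one must write the associator/pentagonator data of the semidirect product and verify the 3-group coherence axioms for the translation-dependent 1- and 2-morphisms. My first attempt would be to choose $\mathcal{L}_C$ so that the translation action is strict up to a single explicit 2-cocycle in $C$, and to package the structure as a 2-crossed module to avoid pentagonator bookkeeping.
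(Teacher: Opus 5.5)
\textbf{Step 3 fails as stated, and the $\mathbb D$-side in Step 2 is mis-specified.} The map $T_2\mathbb D_{n+1}^{F_i}\to T_2\mathbb B_{n+1}^{F_i}$ is not an isomorphism on homotopy groups regarded as Lie groups, and is not meant to be; neither is Nikolaus--Waldorf's $T\mathbb D_n\to T\mathbb B_n$.

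\begin{itemize}
\item In $T_2\mathbb B_{n+1}^{F_1}$ one has $\pi_1=\Lambda^2(\mathbb Z^{n+1})^\ast$ and $\pi_2=C^\infty(\mathbb R^{n+1}/\mathbb Z^{n+1},U(1))$.
\item In the paper's $T_2\mathbb D_{n+1}^{F_1}$ one has $\pi_1=(\mathbb R^{n+1})^\ast/(\mathbb Z^{n+1})^\ast$ and $\pi_2=U(1)$, and $\pi_0$ acquires the torus $\Lambda^2(\mathbb R^{n+1})^\ast/\Lambda^2(\mathbb Z^{n+1})^\ast$.
\end{itemize}

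A smooth weak equivalence would make $T_2\mathbb D$-bundles carry no more data than $T_2\mathbb B$-bundles, which defeats the purpose of the $\mathbb D$-side. Your list is also internally inconsistent: a 3-group on a 2-groupoid has no categorical $\pi_3$, and after realization that group would be $\mathbb Z$, not $U(1)$.

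What holds, and what the paper proves, is a topological statement. The forgetful map is a homotopy equivalence on the spaces of objects, arrows and 2-cells. This is because $(A_k,z)\mapsto z\exp(A_k(\cdot))$ embeds $(\mathbb Z^{n+1})^\ast\times U(1)$ homotopy-equivalently into $C^\infty(\mathbb R^{n+1}/\mathbb Z^{n+1},U(1))$, and the forgotten factors are vector spaces. This dictates the enlargement:
\begin{itemize}
\item a real 2-form $B\in\Lambda^2(\mathbb R^{n+1})^\ast$ on objects, absorbing the discrete $H^2$ in $\pi_1$;
\item a real 1-form $A\in(\mathbb R^{n+1})^\ast$ on arrows, absorbing the components $H^1$ of $\pi_2$;
\item 2-cells cut down to pairs $(A_k,z)$.
\end{itemize}
Dual coordinates $\hat a\in\mathbb R^{n+1}$ on objects are the 2-group recipe without the degree shift. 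Note that the dimensional reduction sends Nikolaus--Waldorf's $A$ into exactly this 2-form slot, as $A\wedge de_{n+1}$. Nothing supports ``translations act strictly'' either: in the paper's $\mathbb D$-model the compositor, interchanger, associator arrows and 2-cells, and the pentagonator $\exp(-H_{k_1}^{low}(v_2,v_3,v_4))$ are all nontrivial.

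\textbf{Step 1, the core of the theorem, is left open, along a route the paper deliberately avoids.} Your model gerbes with $\mathbb Z^{n+1}$-descent, and translation isomorphisms twisted by $C(a,-,-)$, are essentially the paper's $\mathcal{L}^H$ and $\Phi_{v,H}$ (twisted by $\exp(H^{low}(v,x,k))$). But these compose only up to $\chi_{v_1,v_2,H}$, whose coherence fails by $\exp(-H^{low}(v_3,v_2,v_1))$. Moreover, an integer translation acts by $\Phi_{k,H}\cong L^{\iota_kH}$ rather than trivially. So you cannot literally quotient by the lattice. The paper keeps the action groupoid $\mathbb R^{n+1}/\!/\mathbb Z^{n+1}$, with left multiplication sending $(k_2,B_{k_2})$ to $(k_2,B_{k_2}+\iota_{k_2}H_{k_1})$, and ends up with nontrivial compositors, associator 2-cells and pentagonator.

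The paper notes that strictification to Gray 3-groups (2-crossed modules) relies on choice and is unavailable for Lie 3-groups. It therefore introduces a weaker notion of Lie 3-group, writes every structure cell explicitly in terms of $H^{low}$ and $B^{low}$, and verifies the axioms directly. Property (1) is then an explicit isomorphism with:
\begin{itemize}
\item multiplicator arrows $\Phi_{v_2,H_1}\otimes\mathrm{id}$;
\item multiplicator 2-cells $\rho_{v_2,B_1}$ and $\Psi_{v_2,k_2,H_1}^{-1}$;
\item associator 2-cells built from $\chi$.
\end{itemize}
You supply no 2-crossed module, so (1) and the $\mathbb D$-side remain unconstructed. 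Likewise, your cup-product embedding agrees with the paper's $B_k\mapsto B_k\wedge de_{n+1}$ on objects. But (4) and commutativity also require, on arrows, resolving $\mathbb R/\mathbb Z$ to $\mathbb R$ with $\mathbb Z$-valued 2-cells, and checking $(k,A_k,x)\mapsto(k,A_k\wedge de_{n+1},x\,de_{n+1})$ against those explicit structure cells.
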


Theorem \ref{th:mainintro} should be interpreted as follows. By \ref{it:main1} and \ref{it:main2}, $T_2\mathbb B_{n+1}^{F_i}$-bundles are to be thought of as 2-gerbes over $\mathbb R^{n+1}/\mathbb Z^{n+1}$-fibrations of type $F_i$, as these are the objects that can be constructed by gluing along gerbes over $\mathbb R^{n+1}/\mathbb Z^{n+1}$-fibers. By \ref{it:main3}, $T_2\mathbb B_{n+1}^{F_i}$-bundles can be lifted to $T_2\mathbb D_{n+1}^{F_i}$-bundles. By \ref{it:main4} and the results in \cite{Nikolaus:2018qop}, the corresponding $T_2\mathbb D_{n+1}^{F_i}$-bundles contain additional data describing the construction of dual backgrounds. However, we note that in this article we work purely at the algebraic level. Associated bundles, connections on them, and a physical interpretation of \cref{th:mainintro} in terms of a duality for higher-dimensional field theories lifting T-duality are studied in a separate paper with G. Gagliardo and C. Sämann \cite{Gagliardo:2026inprep}.
     
    \subsection{Organization of the paper}

    In \cref{sec:basic} we recall basic definitions from 2-category theory and gerbes which are needed to fix notation. In \cref{sec:lie3} we introduce the model for Lie 3-groups that is used in this paper, which is weaker than the model based on Lie 2-crossed modules, commonly found in the literature. In \cref{sec:results} we construct the Lie 3-groups of interest, and prove \cref{th:mainintro} by splitting it into \cref{prop:t2dnwhet2bn}, \cref{th:t2bnf1isgerbes} and \cref{th:dimredf1}.

    \section{Basic definitions}\label{sec:basic}
    In this section we collect some standard definitions to fix notation.

    \subsection{2-categories, pseudofunctors and pseudonatural transformations}\label{sec:2cat}

    A \emph{2-category} \cite{Leinster:1998aa} $C$ is a collection $C_0$ of \emph{objects}, a collection $C_1$ of \emph{arrows} and a collection $C_2$ of \emph{2-cells} with source and target maps of the form
    \begin{equation}
        \begin{tikzcd}[ampersand replacement = \& ]
                g \ar[r,bend left = 40, "{\gamma}"{name=F},pos=0.55] \ar[r,bend right = 40, "{\eta}"{name=G},swap,pos=0.5] \ar[Rightarrow,from=F,to=G,"{\psi}",swap] \& g'
            \end{tikzcd},
    \end{equation}
    along with horizontal and vertical composition rules
    \begin{equation}
        \begin{aligned}
        &\begin{tikzcd}[ampersand replacement = \&]
                g \ar[r,bend left = 40, "{\gamma}"{name=F},pos=0.55] \ar[r,bend right = 40, "{\eta}"{name=G},swap,pos=0.5] \ar[Rightarrow,from=F,to=G,"{\psi}",swap] \& g' \ar[r,bend left = 40, "{\gamma'}"{name=F2}] \ar[r,bend right = 40, "{\eta'}"{name=G2},swap] \ar[Rightarrow,from=F2,to=G2,"{\psi'}",swap] \& g''
            \end{tikzcd} =
            \begin{tikzcd}[ampersand replacement = \&,column sep = 9ex]
                g \ar[r,bend left = 40, "{\gamma' \circ \gamma}", ""{name=F,coordinate,pos=0.5},pos=0.55] \ar[r,bend right = 40, "{\eta' \circ \eta}"{swap}, ""{name=G,coordinate,pos=0.5},pos=0.55] \ar[Rightarrow,from=F,to=G,"{\psi' \circ \psi}",swap] \& g''
            \end{tikzcd} ,
        &\qquad
        \begin{tikzcd}[ampersand replacement = \& ,column sep = 12ex]
                g \ar[r,bend left = 70, "{\gamma}"{name=F}] \ar[r,bend left = 10, "{\eta}"{name=G},swap,pos=0.5]
                \ar[r,bend right = 70,"{\lambda}"{name=H},swap]
                \ar[Rightarrow,from=F,to=G,"{\psi}",swap,pos=0.2,shorten = 1ex]
                \ar[Rightarrow,from=G,to=H,"{\phi}",swap,pos=0.4]
                \& g'
            \end{tikzcd} = 
        \begin{tikzcd}[ampersand replacement = \& ,column sep = 12ex]
                g \ar[r,bend left = 70, "{\gamma}"{name=F}]
                \ar[r,bend right = 70,"{\lambda}"{name=H},swap]
                \ar[Rightarrow,from=F,to=H,"{\phi \circ \psi}",swap,pos=0.3]
                \& g'
            \end{tikzcd}.
        \end{aligned}
    \end{equation}
    which must be strictly associative and satisfy the interchange law
    \begin{equation}
        \begin{tikzcd}[ampersand replacement = \&, column sep = 12ex]
            g \ar[r,bend left = 70,"{\gamma' \circ \gamma}"{name=F}]
            \ar[r,"{\eta' \circ \eta}"{name=G},pos=0.55]
            \ar[r,bend right = 70,"{\lambda' \circ \lambda}"{name=H},swap]
            \ar[Rightarrow,from=F,to=G,"{\psi' \circ \psi}",swap]
            \ar[Rightarrow,from=G,to=H,"{\phi' \circ \phi}",swap]
            \& g''
        \end{tikzcd}
        =
        \begin{tikzcd}[ampersand replacement = \&, column sep = 10ex]
            g \ar[r,bend left = 55,"{\gamma}"{name=F}]
            \ar[r,bend right = 55,"{\lambda}"{name=H},swap]
            \ar[Rightarrow,from=F,to=H,"{\phi \circ \psi}",swap]
            \& g' \ar[r,bend left = 55,"{\gamma'}"{name=F2}]
            \ar[r,bend right = 55,"{\lambda'}"{name=H2},swap]
            \ar[Rightarrow,from=F2,to=H2,"{\phi' \circ \psi'}",swap]
            \& g''.
        \end{tikzcd}
    \end{equation}

    We also demand the existence of an identity arrow for each object, and an identity 2-cell for each arrow. If every arrow and every 2-cell is invertible, we say the 2-category is a \emph{2-groupoid}. If, additionally, $C_0$, $C_1$, and $C_2$ are all smooth manifolds, and all the structure maps are smooth, then we say $C$ is a \emph{Lie 2-groupoid}. A \emph{category} (resp. \emph{Lie groupoid}) is a 2-category (resp. Lie 2-groupoid) with only identity 2-cells.

    A \emph{pseudofunctor} $F: C \rightarrow D$ between 2-categories  $C$ and $D$ is a map sending 2-cells in $C$ to 2-cells in $D$, preserving their sources and targets, as well as the vertical composition rule, along with \emph{compositor cells} for each $g \stackrel{\gamma}{\rightarrow} g' \stackrel{\gamma'}{\rightarrow} g''$
    \begin{equation}
                \begin{tikzcd}[column sep = 5ex]
            F(g) \ar[rr,bend left=60,"{F(\gamma' \circ \gamma)}"{name=F}] 
              \ar[rr,bend right=10,"{F(\gamma') \circ F(\gamma)}"{name=G},swap]
             & & F(g'') \\ \ar[Rightarrow,from=F,to=G,"{F(\gamma,\gamma')}"{swap,pos=0.5},shorten >=1.5pt] &
                \end{tikzcd}
    \end{equation}
    which must be associative on arrows $g \stackrel{\gamma}{\rightarrow} g' \stackrel{\gamma'}{\rightarrow} g'' \stackrel{\gamma''}{\rightarrow} g'''$
    \begin{equation}
        \begin{tikzcd}[every label/.append style={font=\tiny},column sep=9ex]
            F(g) \ar[rr,bend left=75,"{F(\gamma'' \circ \gamma' \circ \gamma)}"{name=A}]
            \ar[rr,bend left=10,"{F(\gamma'' \circ \gamma') \circ F(\gamma)}"{name=B},swap]
            \ar[rr,bend right=80,"{(F(\gamma'') \circ F(\gamma')) \circ F(\gamma)}"{name=C},swap]
            & & F(g''')
            \ar[Rightarrow,from=A,to=B,"{F(\gamma,\gamma'' \circ \gamma')}"{swap},shorten >=1.5pt]
            \ar[Rightarrow,from=B,to=C,"{F(\gamma',\gamma'') \circ \mathrm{id}_{F(\gamma)}}"{swap},shorten >=1.5pt]
        \end{tikzcd}
        =
        \begin{tikzcd}[every label/.append style={font=\tiny},column sep=9ex]
            F(g) \ar[rr,bend left=75,"{F(\gamma'' \circ \gamma' \circ \gamma)}"{name=A}]
            \ar[rr,bend left=10,"{F(\gamma'') \circ F(\gamma' \circ \gamma)}"{name=B},swap]
            \ar[rr,bend right=80,"{F(\gamma'') \circ (F(\gamma') \circ F(\gamma))}"{name=C},swap]
            & & F(g''')
            \ar[Rightarrow,from=A,to=B,"{F(\gamma' \circ \gamma,\gamma'')}"{swap},shorten >=1.5pt,pos=0.7]
            \ar[Rightarrow,from=B,to=C,"{\mathrm{id}_{F(\gamma'')} \circ F(\gamma,\gamma')}"{swap},shorten >=1.5pt],
        \end{tikzcd}
    \end{equation}
    and natural on 2-cells $\psi: \gamma \Rightarrow \eta: g \rightarrow g'$ and $\psi': \gamma' \Rightarrow \eta':g' \rightarrow g''$
    \begin{equation}
        \begin{tikzcd}[every label/.append style={font=\tiny},column sep=9ex]
            F(g) \ar[rr,bend left=65,"{F(\gamma' \circ \gamma)}"{name=A}]
            \ar[rr,bend left=5,"{F(\gamma') \circ F(\gamma)}"{name=B},swap]
            \ar[rr,bend right=75,"{F(\eta') \circ F(\eta)}"{name=C},swap]
            & & F(g'')
            \ar[Rightarrow,from=A,to=B,"{F(\gamma,\gamma')}"{swap},shorten >=1.5pt]
            \ar[Rightarrow,from=B,to=C,"{F(\psi') \circ F(\psi)}"{swap},shorten >=1.5pt,pos=0.4]
        \end{tikzcd}
        =
        \begin{tikzcd}[every label/.append style={font=\tiny},column sep=9ex]
            F(g) \ar[rr,bend left=65,"{F(\gamma' \circ \gamma)}"{name=A}]
            \ar[rr,bend left=5,"{F(\eta' \circ \eta)}"{name=B},swap]
            \ar[rr,bend right=75,"{F(\eta') \circ F(\eta)}"{name=C},swap]
            & & F(g'')
            \ar[Rightarrow,from=A,to=B,"{F(\psi' \circ \psi)}"{swap},shorten >=1.5pt]
            \ar[Rightarrow,from=B,to=C,"{F(\eta,\eta')}"{swap},shorten >=1.5pt].
        \end{tikzcd}
    \end{equation}
    A \emph{functor} is a pseudofunctor in which the compositor cells are identities. When the 2-categories are Lie 2-groupoids, we say a pseudofunctor is \emph{smooth} if all structure maps are smooth.
    
    A \emph{pseudonatural transformation} $\alpha:F \Rightarrow G$ between pseudofunctors $F,G:C\rightarrow D$ is a collection of arrows and 2-cells in $D$
    \begin{equation}
        \alpha(g):F(g)\longrightarrow G(g), \quad g \in C_0 \qquad \begin{tikzcd}[column sep=12ex]
            F(g)  \ar[r, "{\alpha(g)}"{name=A}] \ar[d,"{F(\gamma)}",swap] & G(g) \ar[d,"{G(\gamma)}"] \\
            F(g') \ar[r,"{\alpha(g')}"{name=B},swap] & G(g')
            \ar[Rightarrow,from=1-2,to=2-1,"{\alpha(\gamma)}",swap]
        \end{tikzcd}, \quad \gamma:g\rightarrow g' \in C_1
    \end{equation}
    which must respect composition of arrows $g\stackrel{\gamma}{\rightarrow}g'\stackrel{\gamma'}{\rightarrow}g''$,
    \begin{equation}
        \begin{tikzcd}[column sep=12ex]
            F(g)  \ar[r, "{\alpha(g)}"{name=A}] \ar[d,"{F(\gamma)}",swap] & G(g) \ar[d,"{G(\gamma)}"] \\
            F(g') \ar[r,"{\alpha(g')}"{name=B},swap] \ar[d,"{F(\gamma')}",swap] & G(g') 
            \ar[Rightarrow,from=1-2,to=2-1,"{\alpha(\gamma)}",swap] \ar[d,"{G(\gamma')}"] \\
            F(g'') \ar[r,"{\alpha(g'')}"{name=B},swap] & G(g'')
            \ar[Rightarrow,from=2-2,to=3-1,"{\alpha(\gamma')}",swap,pos=0.65]
        \end{tikzcd} = 
        \begin{tikzcd}[column sep=12ex]
            F(g)  \ar[r, "{\alpha(g)}"{name=A}] \ar[d,"{F(\gamma)}",swap] & G(g) \ar[d,"{G(\gamma)}"] \\
            F(g'') \ar[r,"{\alpha(g'')}"{name=B},swap] & G(g'')
            \ar[Rightarrow,from=1-2,to=2-1,"{\alpha(\gamma' \circ \gamma)}",swap]
        \end{tikzcd}
    \end{equation}
    and be natural on 2-cells $\psi:\gamma\Rightarrow\eta:g\rightarrow g'$
    \begin{equation}
        \begin{tikzcd}[column sep=14ex]
             F(g) \ar[r,"{\alpha(g)}",{name=U}] \ar[d,"{}"{name=E}] \ar[d,bend right=100, "{F(\eta)}"{name=F},swap]  \ar[Rightarrow,from=E,to=F,"{F(\psi)}"]
             & G(g) \ar[d,"{G(\gamma)}"] \ar[Rightarrow, dl,"{\alpha(\gamma)}",swap]\\
             F(g') \ar[r,"{\alpha(g')}"{name=D}]
             & G(g')
            \end{tikzcd} = 
         \begin{tikzcd}[column sep=14ex]
             F(g)  \ar[d,"{F(\eta)}"{name=E},swap] \ar[r,"{\alpha(g)}",{name=U}]
             & F(g') \ar[d,"{}"{name=R}] \ar[Rightarrow, dl,"{\alpha(\eta)}",swap] \ar[d,bend left=100, "{G(\gamma)}"{name=G}] \ar[Rightarrow,from=G,to=R,"{G(\psi)}"]\\
             G(g) \ar[r,"{\alpha(g')}"{name=D},swap] 
             & G(g') 
            \end{tikzcd}.
    \end{equation}
    A \emph{natural transformation} is a pseudonatural transformation in which the defining 2-cells are all identities. When $C$ and $D$ are Lie 2-groupoids, and $F, \,G$ are smooth, a pseudonatural transformation is called \emph{smooth} if its object and arrow components depend smoothly on their arguments.

    The most basic example of a 2-category, denoted by $S \times BG \times B^2T$ for a set $S$, a group $G$ and an abelian group $T$, has 2-cells of the form
    \begin{equation}
        \begin{tikzcd}[ampersand replacement = \& ]
                s \ar[r,bend left = 40, "{g}"{name=F},pos=0.55] \ar[r,bend right = 40, "{h}"{name=G},swap,pos=0.5] \ar[Rightarrow,from=F,to=G,"{t}",swap] \& s'
            \end{tikzcd}, \quad \quad s, \,s' \in S, \: g, \,h \in G, \: t \in T,
    \end{equation}
    composed horizontally and vertically using the group products of $G$ and $T$. Another standard example is $\mathsf{Cat}$, the 2-category which has categories as objects, functors as arrows, and natural transformations as 2-cells, with their standard horizontal and vertical composition rules \cite{0387984038}.

    \subsection{Gerbes}

Given a submersion $\pi:Y \rightarrow X$, we write $Y^{[n]} := Y \times_{\pi} \cdots \times_{\pi} Y$ for the $n$-fold fibered product of $Y$ over $X$. We also write $p_{i_{1}...i_n}: Y^{[m]} \rightarrow Y^{[n]}$ with $1 \leq i_r \leq m$ for the map defined by the $i_1$-th, $i_2$-th, ..., $i_n$-th components of $Y^{[m]}$. For any object $L$ lying over $Y^{[n]}$ that can be pulled back along smooth maps (such as a line bundle, or an isomorphism of line bundles), we write $L_{i_{1}...i_n}$ for its pull-back along $p_{i_{1}...i_n}$.

    A \emph{bundle gerbe} $(Y,\pi,L,\alpha)$ over a manifold $X$ \cite{Murray:9407015} is a submersion $\pi: Y \rightarrow X$, a $U(1)$-bundle $\pi^L:L \rightarrow Y^{[2]}$ and an isomorphism $\alpha: L_{12} \otimes L_{23} \rightarrow L_{13}$ of $U(1)$-bundles over $Y^{[3]}$ such that $\alpha_{134} \circ \alpha_{123} = \alpha_{124} \circ \alpha_{234}: L_{12} \otimes L_{23} \otimes L_{34} \rightarrow L_{14}$ over $Y^{[4]}$. Note in particular $\alpha_{111}$ is equivalent to a section $\tau$ of $L_{11}$. A \emph{gerbe} $(\mathcal{L},\pi)$ over a manifold $X$ is a Lie groupoid $\mathcal{L}$ with a smooth \emph{projection} functor $\pi:\mathcal{L} \rightarrow X$ which can be constructed from a bundle gerbe $(Y,\pi,L,\alpha)$ by letting $\mathcal{L}_0 := Y$, $\mathcal{L}_1 := L$ with identities defined from $\tau$, $\pi:\mathcal{L} \rightarrow X$ by the submersion of the bundle gerbe, source and target maps
    \begin{equation}
        \begin{tikzcd}
            \pi^L_1(l) \ar[r,bend left = 20, "y"] & \pi^L_2(l)
        \end{tikzcd} , \quad \quad l \in L,
    \end{equation}
    and composition
    \begin{equation}
        \begin{tikzcd}[every label/.append style = {font = \footnotesize},column sep = 4ex]
            \pi^L_1(l_{12}) \ar[r,bend left = 20, "{l_{12}}"] & \pi^L_2(l_{12}) = \pi^L_1(l_{23})  \ar[r,bend left = 20, "{l_{23}}"] & \pi^L_2(l_{23})
        \end{tikzcd} := 
        \begin{tikzcd}[every label/.append style = {font = \footnotesize},column sep = 4ex]
             \pi^L_1(l_{12}) \ar[r,bend left = 20, "{\alpha(l_{12} \otimes l_{23})}"] & \pi^L_2(l_{23}).
        \end{tikzcd}
    \end{equation}

    If $(Y,\pi,L,\alpha)$, $(Y,\pi,L',\alpha')$ are bundle gerbes over $X$ defined over a common acyclic submersion $\pi:Y \rightarrow X$, then an \emph{isomorphism of bundle gerbes over the same acyclic submersion} $\beta : (Y,\pi,L,\alpha) \rightarrow (Y,\pi,L',\alpha')$ is an isomorphism of $U(1)$-bundles $\beta: L \rightarrow L'$ over $Y^{[2]}$ such that $\beta_{13} \circ \alpha = \alpha' \circ \beta_{23} \circ \beta_{12}$ over $Y^{[3]}$. An \emph{isomorphism of gerbes} is a smooth functor $\gamma:\mathcal{L} \rightarrow \mathcal{L}'$ which can be constructed from an isomorphism of bundle gerbes over the same acyclic submersion by $\gamma(y) = y$ on objects and $\gamma(l) = \beta(l)$ on arrows.

    If $\beta, \, \beta': (Y,\pi,L,\alpha) \rightarrow (Y,\pi',L',\alpha')$ are isomorphisms of bundle gerbes over the same acyclic submersion, then a \emph{2-isomorphism of bundle gerbes} $f: \beta \Rightarrow \beta'$ is a function $f: Y \rightarrow U(1)$ such that $f_1 \cdot \beta = f_2 \cdot \beta'$ over $Y^{[2]}$. A \emph{2-isomorphism of gerbes} is a smooth natural transformation $\psi:\gamma \Rightarrow \gamma':\mathcal{L} \rightarrow \mathcal{L}'$ constructed from a 2-isomorphism of bundle gerbes by $\psi(y) = f(y) \cdot \tau'(y)$.

    \emph{Pullback} of gerbes is defined by pulling back all their constituent data. \emph{Tensor product} of gerbes is defined by taking the tensor products of all $U(1)$-bundles and isomorphisms between them.

    Given a manifold $X$, fix an acyclic submersion $Y \rightarrow X$. Gerbes defined over this submersion, their isomorphisms and 2-isomorphisms form a 2-subcategory of $\mathsf{Cat}$ which does not depend on $Y$, as it is actually equivalent to 
    \begin{equation}\label{eq:gxcohomology}
    \mathcal{G}(X) \cong H^3(X,\mathbb Z) \times BH^2(X,\mathbb Z) \times B^2C^{\infty}(X,U(1)).
    \end{equation} 

    \begin{remark}
        The auxiliary choice of a fixed, acyclic submersion to define $\mathcal{G}(X)$ is made here to avoid the more technical notion of a bibundle between Lie groupoids, necessary to model isomorphisms of gerbes defined over different and possibly non-acyclic submersions \cite{Murray:9908135,Nikolaus:2011ag}.
    \end{remark}

    \section{Definitions regarding Lie 3-groups}\label{sec:lie3}

    An algebraic model for weak set-theoretic 3-groups can be found in \cite{Gurski:2006} (based on \cite{Gordon:1995tr}), where it is also proved that every such 3-group is equivalent to a \emph{Gray} 3-group or, equivalently, a 2-crossed module. Since this equivalence relies on the axiom of choice, it is no longer true in the case of weak \emph{Lie} 3-groups. 
    
    In order to present the most general definition of weak Lie 3-groups, one must work with higher stacks or simplicial models, as in \cite{Zhu:0801.2057}. Here we introduce a family of Lie 3-groups that are weaker than Gray Lie 3-groups (i.e. Lie 2-crossed modules) but still strict enough to be suitable for an algebraic description; this will be sufficient for the purposes of this paper. For notational convenience, we will refer to these simply as \emph{Lie 3-groups}, while acknowledging the restricted scope of our definition.
    \pagebreak 
    \subsection{Lie 3-groups}
    
    \begin{definition}
        A \emph{Lie 3-group} is
        \begin{enumerate}
            \item A Lie 2-groupoid $\mathcal{G}$.
            
            \item For each $g \in \mathcal{G}_0$, \emph{left} and \emph{right multiplication} smooth pseudofunctors $L_g: \mathcal{G} \rightarrow \mathcal{G}$ and $R_g: \mathcal{G} \rightarrow \mathcal{G}$ such that $L_{g_1}(g_2) = R_{g_2}(g_1) =: g_1g_2$. In particular, for $g_i \stackrel{\gamma_i}{\rightarrow} g_i' \stackrel{\gamma_i'}{\rightarrow} g_i''$, $i=1, \,2$, there are \emph{compositor} 2-cells
            \begin{equation}
                \begin{tikzcd}[column sep = 10ex]
            g_1g_2 \ar[rr,bend left=60,"{(\gamma_1' \circ \gamma_1)g_2}"{name=F}] 
              \ar[rr,bend right=10,"{(\gamma_1'g_2) \circ (\gamma_1g_2)}"{name=G},swap]
             & & g_1''g_2 \\ \ar[Rightarrow,from=F,to=G,"{R_{g_2}(\gamma_1,\gamma_1')}"{swap,pos=0.5},shorten >=1.5pt] &
                \end{tikzcd},
                \begin{tikzcd}[column sep = 10ex]
            g_1g_2 \ar[rr,bend left=60,"{g_1(\gamma_2' \circ \gamma_2)}"{name=F}] 
              \ar[rr,bend right=10,"{(g_1\gamma_2') \circ (g_1\gamma_2)}"{name=G},swap]
             & & g_1g_2'' \\ \ar[Rightarrow,from=F,to=G,"{L_{g_1}(\gamma_2,\gamma_2')}"{swap,pos=0.5},shorten >=1.5pt] &
                \end{tikzcd}.
            \end{equation}
            
            \item For $\gamma_i:g_i \rightarrow g_i' \in \mathcal{G}_1$, $i = 1,\,2$, \emph{interchange} 2-cells
            \begin{equation}
            \begin{tikzcd}[column sep=14ex]
             g_1g_2 \ar[r,"{\gamma_1g_2}",{name=U}] \ar[d,"{g_1\gamma_2}",swap] 
             & g_1'g_2 \ar[d,"{g_1'\gamma_2}"] \ar[Rightarrow, dl,"{I(\gamma_1,\gamma_2)}",swap]\\
             g_1g_2' \ar[r,"{\gamma_1g_2'}",{name=D},swap] 
             & g_1'g_2'
            \end{tikzcd}
            \end{equation}
            natural on 2-cells and respecting composition of arrows.

            \item For $g_i \in \mathcal{G}_0$, $i=1, \,2, \,3$, \emph{associator} $1$-arrows
            \begin{equation}
            \begin{tikzcd}[column sep=14ex]
                (g_1g_2)g_3 \ar[r, "{\alpha(g_1,g_2,g_3)}"] & g_1(g_2g_3)
            \end{tikzcd}
            \end{equation}
            and for $\gamma_i:g_i \rightarrow g_i' \in \mathcal{G}_1$, $i = 1,\,2,\,3$, \emph{associator} 2-cells
            \begin{equation}
            \begin{tikzcd}[column sep=14ex]
             (g_1g_2)g_3 \ar[r,"{\alpha(g_1,g_2,g_3)}",{name=U}] \ar[d,"{(\gamma_1g_2)g_3}",swap] 
             & g_1(g_2g_3) \ar[d,"{\gamma_1(g_2g_3)}"] \\
             (g_1'g_2)g_3 \ar[r,"{\alpha(g_1',g_2,g_3)}",{name=D},swap]  \ar[Rightarrow, ur,"{\alpha(\gamma_1,g_2,g_3)}"]
             & g_1'(g_2g_3)
            \end{tikzcd},
            \end{equation}
            \begin{equation}
            \begin{tikzcd}[column sep=14ex]
             (g_1g_2)g_3 \ar[r,"{\alpha(g_1,g_2,g_3)}",{name=U}] \ar[d,"{(g_1\gamma_2)g_3}",swap] 
             & g_1(g_2g_3) \ar[d,"{g_1(\gamma_2g_3)}"] \\
             (g_1g_2')g_3 \ar[r,"{\alpha(g_1,g_2',g_3)}",{name=D},swap]  \ar[Rightarrow, ur,"{\alpha(g_1,\gamma_2,g_3)}"]
             & g_1(g_2'g_3)
            \end{tikzcd},
            \end{equation}
            \begin{equation}
            \begin{tikzcd}[column sep=14ex]
             (g_1g_2)g_3 \ar[r,"{\alpha(g_1,g_2,g_3)}",{name=U}] \ar[d,"{(g_1g_2)\gamma_3}",swap] 
             & g_1(g_2g_3) \ar[d,"{g_1(g_2\gamma_3)}"] \\
             (g_1g_2)g_3' \ar[r,"{\alpha(g_1,g_2,g_3')}",{name=D},swap]  \ar[Rightarrow, ur,"{\alpha(g_1,g_2,\gamma_3)}"]
             & g_1(g_2g_3')
            \end{tikzcd}
            \end{equation}
            natural on 2-cells and respecting composition of arrows and interchange cells.
            
            \item For $g_i \in \mathcal{G}_0$, $i = 1, \,...,\,4$, \emph{pentagonator} 2-cells
                \begin{equation}
                \begin{tikzcd}
	                    & ((g_1g_2)g_3)g_4 \\
	            (g_1(g_2g_3))g_4 &        & (g_1g_2)(g_3g_4) \\
	            g_1((g_2g_3)g_4) &        & g_1(g_2(g_3g_4)) 
	           \arrow[from=1-2, to=2-1, "{\alpha(g_1,g_2,g_3)g_4}",swap]
	           \arrow[from=2-1, to=3-1, "{\alpha(g_1,g_2g_3,g_4)}",swap]
	           \arrow[from=3-1, to=3-3, "{g_1\alpha(g_2,g_3,g_4)}"{name=A},swap]
	           \arrow[from=1-2, to=2-3, "{\alpha(g_1g_2,g_3,g_4)}"]
	           \arrow[from=2-3, to=3-3, "{\alpha(g_1,g_2,g_3g_4)}"]
                \arrow[Rightarrow,from=3-1,to=2-3,"{\Sigma(g_1,g_2,g_3,g_4)}"]
            \end{tikzcd}
                \end{equation}
            which are natural on arrows and satisfy the \emph{associahedron identity}
            \begin{align}
            \begin{split}
            \begin{tikzcd}[ampersand replacement=\&, every label/.append style = {font = \footnotesize}, every matrix/.append style={nodes={font=\footnotesize}}, row sep = 6ex]
	{((g_1g_2)g_3)(g_4g_5)} \&\& {(g_1g_2)(g_3(g_4g_5))} \\
	{(((g_1g_2)g_3)g_4)g_5} \& {((g_1g_2)(g_3g_4))g_5} \& {(g_1g_2)((g_3g_4)g_5)} \& [2ex] {g_1(g_2(g_3(g_4g_5)))} \\
	{((g_1(g_2g_3))g_4)g_5} \&\& {g_1(g_2((g_3g_4)g_5))} \\
	{(g_1((g_2g_3)g_4))g_5} \& {(g_1(g_2(g_3g_4)))g_5} \& {g_1((g_2(g_3g_4))g_5)}
	\arrow["{(\alpha(g_1,g_2,g_3)g_4)g_5}"', from=2-1, to=3-1]
	\arrow["{\alpha(g_1,g_2g_3,g_4)g_5}"', from=3-1, to=4-1]
	\arrow["{(g_1\alpha(g_2,g_3,g_4))g_5}"', from=4-1, to=4-2]
	\arrow["{\alpha(g_1,g_2(g_3g_4),g_5)}"', from=4-2, to=4-3]
	\arrow["{\alpha(g_1g_2,g_3,g_4g_5)}"{name=a}, from=1-1, to=1-3]
	\arrow["{\alpha(g_1,g_2,g_3(g_4g_5))}"{name=p}, from=1-3, to=2-4]
	\arrow[bend right = 40, "{g_1(g_2(\alpha(g_3,g_4,g_5)))}"'{name=A}, from=3-3, to=2-4]
	\arrow["{g_1\alpha(g_2,g_3g_4,g_5)}"', from=4-3, to=3-3]
	\arrow["{\alpha((g_1g_2)g_3,g_4,g_5)}", from=2-1, to=1-1]
	\arrow[from=2-3, to=3-3]
	\arrow[from=2-3, to=1-3]
	\arrow[from=2-1, to=2-2]
	\arrow[from=2-2, to=4-2]
	\arrow[from=2-2, to=2-3]
    \arrow["{\Sigma(g_1g_2,g_3,g_4,g_5)}", Rightarrow, from=2-2, to=a,pos=0.6]
        \arrow["{\widetilde{\Sigma}(g_1,g_2,g_3,g_4)g_5}", Rightarrow, from=4-1, to=2-2,pos=0.85]
        \arrow["{\Sigma(g_1,g_2,g_3g_4,g_5)}", Rightarrow, from=4-2, to=2-3,pos=0.85]
        \arrow["{\alpha(g_1,g_2,\alpha(g_3,g_4,g_5))^{-1}}", Rightarrow, from=A, to=1-3,shorten=4ex,sloped,swap,pos=0.6]
\end{tikzcd} = \\
\begin{tikzcd}[column sep= 6ex, row sep = 7ex, ampersand replacement=\&, every label/.append style = {font = \footnotesize}, every matrix/.append style={nodes={font=\footnotesize}}]
	\&\& {(((g_1g_2)g_3)g_4)g_5} \\
	\& {((g_1(g_2g_3))g_4)g_5} \&\& {((g_1g_2)g_3)(g_4g_5)} \\
	\& {(g_1((g_2g_3)g_4))g_5} \& {(g_1(g_2g_3))(g_4g_5)} \& {(g_1g_2)(g_3(g_4g_5))} \\
	\& {g_1(((g_2g_3)g_4)g_5)} \& {g_1((g_2g_3)(g_4g_5))} \& {g_1(g_2(g_3(g_4g_5)))} \\
	{(g_1(g_2(g_3g_4)))g_5} \&\& {g_1((g_2(g_3g_4))g_5)} \& {g_1(g_2((g_3g_4)g_5))}
	\arrow["{(\alpha(g_1,g_2,g_3)g_4)g_5}"', from=1-3, to=2-2]
	\arrow["{\alpha(g_1,g_2g_3,g_4)g_5}"', from=2-2, to=3-2]
	\arrow[from=3-2, to=4-2]
	\arrow["{}", from=4-2, to=4-3]
	\arrow["{\alpha(g_1g_2,g_3,g_4g_5)}", from=2-4, to=3-4]
	\arrow["{\alpha(g_1,g_2,g_3(g_4g_5))}"{name=t}, from=3-4, to=4-4]
	\arrow["{\alpha((g_1g_2)g_3,g_4,g_5)}", from=1-3, to=2-4]
	\arrow[from=2-2, to=3-3]
	\arrow[from=2-4, to=3-3]
	\arrow[""{name=s},from=3-3, to=4-3]
	\arrow[from=4-2, to=5-3]
	\arrow["{g_1\alpha(g_2,g_3g_4,g_5)}"', from=5-3, to=5-4]
	\arrow[from=4-3, to=4-4]
	\arrow["{g_1(g_2\alpha(g_3,g_4,g_5))}"', from=5-4, to=4-4]
	\arrow["{(g_1\alpha(g_2,g_3,g_4))g_5}"', from=3-2, to=5-1]
	\arrow["{\alpha(g_1,g_2(g_3g_4),g_5)}"', from=5-1, to=5-3]
        \arrow[Rightarrow, "{\Sigma(g_1,g_2g_3,g_4,g_5)}", from = 4-2, to = s, pos= 0.7]
        \arrow[Rightarrow, "{\Sigma(g_1,g_2,g_3,g_4g_5)}", from = 4-3, to = t, pos= 0.7]
        \arrow[Rightarrow, "{g_1\widetilde{\Sigma}(g_2,g_3,g_4,g_5)}", from = 5-3, to = 4-4, pos= 0.5]
        \arrow[Rightarrow, "{\alpha(\alpha(g_1,g_2,g_3),g_4,g_5)}", from = 2-2, to = 2-4, pos= 0.5]
        \arrow[Rightarrow, "{\alpha(g_1,\alpha(g_2,g_3,g_4),g_5)}", from = 5-1, to = 4-2, pos= 0.5,swap]
\end{tikzcd},
            \end{split} 
            \end{align}
            where $\widetilde{\Sigma}(g_1,g_2,g_3,g_4)g_5$ and $g_1\widetilde{\Sigma}(g_2,g_3,g_4,g_5)$ stand respectively for

            \begin{align}
            \begin{split}
                \begin{tikzcd}[row sep = 7ex, column sep = 15ex, every label/.append style = {font = \tiny}, ampersand replacement = \&]
	                    \& (((g_1g_2)g_3)g_4)g_5 \\
	            ((g_1(g_2g_3))g_4)g_5 \&        \& ((g_1g_2)(g_3g_4))g_5 \\
                \\
	            (g_1((g_2g_3)g_4))g_5 \&        \& (g_1(g_2(g_3g_4)))g_5 
	           \arrow[from=1-2, to=2-1, "{(\alpha(g_1,g_2,g_3)g_4)g_5}",swap]
	           \arrow[from=2-1, to=4-1, bend right = 40, "{\alpha(g_1,g_2g_3,g_4)g_5}"{name=P},swap]
	           \arrow[from=4-1, to=4-3, "{(g_1\alpha(g_2,g_3,g_4))g_5}"{name=A},swap]
	           \arrow[from=1-2, to=2-3, bend left = 50, looseness = 0.75, "{\alpha(g_1g_2,g_3,g_4)g_5}"{name=C},pos=0.65]
	           \arrow[from=2-3, to=4-3, "{\alpha(g_1,g_2,g_3g_4)g_5}"]
                \arrow["{}"{name=A}, bend right = 40, looseness = 1.75, from =1-2, to = 4-3]
                \arrow["{}"{name=B}, bend right=20, from=1-2, to = 4-3, pos= 0.1]
                \arrow[Rightarrow,from=A,to=B,"{\Sigma(g_1,g_2,g_3,g_4)g_5}",swap,pos=0,shorten=1ex]
                \arrow[Rightarrow, "{R_{g_5}(\alpha(g_1g_2,g_3,g_4),\alpha(g_1,g_2,g_3g_4))}" ,from=B, to = C,shorten = 3ex,swap,pos=0]
                \arrow[Rightarrow, from=P, to=A, "{R_{g_5}(\alpha(g_1,g_2,g_3)g_4,\alpha(g_1,g_2g_3,g_4),g_1\alpha(g_2,g_3,g_4))^{-1}}",shorten=1ex,pos=0]
            \end{tikzcd}, \\
            \begin{tikzcd}[row sep = 7ex, column sep = 15ex, every label/.append style = {font = \tiny}, ampersand replacement = \&]
	                    \& g_1(((g_2g_3)g_4)g_5) \\
	            g_1((g_2(g_3g_4))g_5) \&        \& g_1((g_2g_3)(g_4g_5)) \\
                \\
	            g_1(g_2((g_3g_4)g_5)) \&        \& g_1(g_2(g_3(g_4g_5))) 
	           \arrow[from=1-2, to=2-1, "{g_1((\alpha(g_2,g_3,g_4)g_5))}",swap]
	           \arrow[from=2-1, to=4-1, bend right = 40, "{g_1\alpha(g_2,g_3g_4,g_5)}"{name=P},swap]
	           \arrow[from=4-1, to=4-3, "{g_1(g_2\alpha(g_3,g_4,g_5))}"{name=A},swap]
	           \arrow[from=1-2, to=2-3, bend left = 50, looseness = 0.75, "{g_1\alpha(g_2g_3,g_4,g_5)}"{name=C},pos=0.65]
	           \arrow[from=2-3, to=4-3, "{g_1\alpha(g_2,g_3,g_4g_5)}"]
                \arrow["{}"{name=A}, bend right = 40, looseness = 1.75, from =1-2, to = 4-3]
                \arrow["{}"{name=B}, bend right=20, from=1-2, to = 4-3, pos= 0.1]
                \arrow[Rightarrow,from=A,to=B,"{g_1\Sigma(g_2,g_3,g_4,g_5)}",swap,pos=0,shorten=1ex]
                \arrow[Rightarrow, "{L_{g_1}(\alpha(g_2g_3,g_4,g_5),\alpha(g_2,g_3,g_4g_5))}" ,from=B, to = C,shorten = 3ex,swap,pos=0]
                \arrow[Rightarrow, from=P, to=A, "{L_{g_1}(\alpha(g_2,g_3,g_4)g_5,\alpha(g_2,g_3g_4,g_5),g_2\alpha(g_3,g_4,g_5))^{-1}}",shorten=1ex,pos=0]
            \end{tikzcd}.
            \end{split}
            \end{align}
            
        \end{enumerate}
        A \emph{Gray Lie 3-group} is a Lie 3-group in which the  associator arrows and the compositor, associator and pentagonator 2-cells are identities. Note interchange 2-cells may not be identities. A \emph{Lie 2-group} is a Lie 3-group with only identity 2-cells.
    \end{definition}

    \begin{remark}
        In a Lie 2-group, the pseudofunctors $L_{g_1}$ and $R_{g_2}$ define a single smooth functor $m: \mathcal{G} \times \mathcal{G} \rightarrow \mathcal{G}$, and the associator arrows define a natural transformation $m \circ (m \times id) \Rightarrow m \circ (id \times m): \mathcal{G} \times \mathcal{G} \times \mathcal{G} \rightarrow \mathcal{G}$.
    \end{remark}
    
   We spell out in more detail the data that defines a Lie 3-group $\mathcal{G}$. The pseudofunctors $L_{g_1}, \,R_{g_2}: \mathcal{G} \rightarrow \mathcal{G}$ are given by smooth maps $L_{g_1}^i, R_{g_2}^i: \mathcal{G}_i \rightarrow \mathcal{G}_i$ preserving all source, target, identity and vertical composition maps. Their compositors must be natural on 2-cells; i.e., given $\psi_i:\gamma_i \Rightarrow \eta_i$, $\psi_i':\gamma_i' \Rightarrow \eta_i'$, we must have
    \begin{equation}
        \begin{tikzcd}[every label/.append style = {font = \tiny},column sep = 10ex]
            g_1g_2 \ar[rr,bend left=60,"{(\gamma_1' \circ \gamma_1)g_2}"{name=F}] 
              \ar[rr,bend left = 10,"{(\gamma_1'g_2) \circ (\gamma_1g_2)}"{name=G},swap]
              \ar[rr,bend right=80,"{(\eta_1'g_2) \circ (\eta_1g_2)}"{name=H},swap]
             & & g_1''g_2 \\ \ar[Rightarrow,from=F,to=G,"{R_{g_2}(\gamma_1,\gamma_1')}"{swap,pos=0.6},shorten >=1.5pt] \ar[Rightarrow,from=G,to=H,"{(\psi_1'g_2) \circ (\psi_1g_2)}"{swap,pos=0.25},shorten >=1.5pt] &
        \end{tikzcd} = 
        \begin{tikzcd}[every label/.append style = {font = \tiny},column sep = 10ex]
            g_1g_2 \ar[rr,bend left=60,"{(\gamma_1' \circ \gamma_1)g_2}"{name=F}] 
              \ar[rr,bend left = 10,"{(\eta_1' \circ \eta_1)g_2)}"{name=G},swap]
              \ar[rr,bend right=80,"{(\eta_1'g_2) \circ (\eta_1g_2)}"{name=H},swap]
             & & g_1''g_2 \\ \ar[Rightarrow,from=F,to=G,"{(\psi_1' \circ \psi_1)g_2}"{swap,pos=0.55},shorten >=1.5pt] \ar[Rightarrow,from=G,to=H,"{R_{g_2}(\eta_1,\eta_1')}"{swap,pos=0.35},shorten >=1.5pt] &
        \end{tikzcd},
    \end{equation}
    \begin{equation}
        \begin{tikzcd}[every label/.append style = {font = \tiny},column sep = 10ex]
            g_1g_2 \ar[rr,bend left=60,"{g_1(\gamma_2' \circ \gamma_2)}"{name=F}] 
              \ar[rr,bend left = 10,"{(g_1\gamma_2') \circ (g_1\gamma_2)}"{name=G},swap]
              \ar[rr,bend right=80,"{(g_1\eta_2') \circ (g_1\eta_2)}"{name=H},swap]
             & & g_1g_2'' \\ \ar[Rightarrow,from=F,to=G,"{L_{g_1}(\gamma_2,\gamma_2')}"{swap,pos=0.6},shorten >=1.5pt] \ar[Rightarrow,from=G,to=H,"{(g_1\psi_2') \circ (g_1\psi_2)}"{swap,pos=0.25},shorten >=1.5pt] &
        \end{tikzcd} = 
        \begin{tikzcd}[every label/.append style = {font = \tiny},column sep = 10ex]
            g_1g_2 \ar[rr,bend left=60,"{g_1(\gamma_2' \circ \gamma_2)}"{name=F}] 
              \ar[rr,bend left = 10,"{g_1(\eta_2' \circ \eta_2)}"{name=G},swap]
              \ar[rr,bend right=80,"{(g_1\eta_2') \circ (g_1\eta_2)}"{name=H},swap]
             & & g_1g_2'' \\ \ar[Rightarrow,from=F,to=G,"{g_1(\psi_2' \circ \psi_2)}"{swap,pos=0.5},shorten >=1.5pt] \ar[Rightarrow,from=G,to=H,"{L_{g_1}(\eta_2,\eta_2')}"{swap,pos=0.35},shorten >=1.5pt] &
        \end{tikzcd}
    \end{equation}
    and they must be associative in the sense that given $g_i \stackrel{\gamma_i}{\rightarrow} g_i' \stackrel{\gamma_i'}{\rightarrow} g_i'' \stackrel{\gamma_i''}{\rightarrow} g_i'''$ then
    \begin{equation}
        \begin{tikzcd}[every label/.append style = {font = \tiny},column sep = 10ex]
            g_1g_2 \ar[rr,bend left=75,"{(\gamma_1'' \circ \gamma_1' \circ \gamma_1)g_2}"{name=F},pos=0.5] 
              \ar[rr,bend left = 10,"{((\gamma_1''\circ \gamma_1') g_2) \circ (\gamma_1g_2)}"{name=G},swap,pos=0.60]
              \ar[rr,bend right=80,"{(\gamma_1''g_2 \circ \gamma_1'g_2) \circ (\gamma_1g_2)}"{name=H},swap]
             & & g_1'''g_2 \\ \ar[Rightarrow,from=F,to=G,"{R_{g_2}(\gamma_1'' \circ \gamma_1',\gamma_1)}"{swap,pos=0.55},shorten >=1.5pt] \ar[Rightarrow,from=G,to=H,"{R_{g_2}(\gamma_1'',\gamma_1') \circ id}"{swap,pos=0.5},shorten >=1.5pt] &
        \end{tikzcd} = 
        \begin{tikzcd}[every label/.append style = {font = \tiny},column sep = 10ex]
            g_1g_2 \ar[rr,bend left=75,"{(\gamma_1'' \circ \gamma_1' \circ \gamma_1)g_2}"{name=F}] 
              \ar[rr,bend left = 10,"{(\gamma_1'' g_2) \circ (\gamma_1' \circ \gamma_1)g_2}"{name=G},swap,pos=0.6]
              \ar[rr,bend right=80,"{(\gamma_1''g_2) \circ (\gamma_1'g_2 \circ \gamma_1g_2)}"{name=H},swap]
             & & g_1'''g_2 \\ \ar[Rightarrow,from=F,to=G,"{R_{g_2}(\gamma_1'', \gamma_1'\circ \gamma_1)}"{swap,pos=0.55},shorten >=1.5pt] \ar[Rightarrow,from=G,to=H,"{id \circ R_{g_2}(\gamma_1',\gamma_1)}"{swap,pos=0.5},shorten >=1.5pt] &
        \end{tikzcd} =: R_{g_2}(\gamma_1,\gamma_1',\gamma_1''),
    \end{equation}
    \begin{equation}
        \begin{tikzcd}[every label/.append style = {font = \tiny},column sep = 10ex]
            g_1g_2 \ar[rr,bend left=75,"{g_1(\gamma_2'' \circ \gamma_2' \circ \gamma_2)}"{name=F},pos=0.5] 
              \ar[rr,bend left = 10,"{(g_1(\gamma_2''\circ \gamma_2')) \circ (g_1\gamma_2)}"{name=G},swap,pos=0.60]
              \ar[rr,bend right=80,"{(g_1\gamma_2'' \circ g_1\gamma_2') \circ (g_1\gamma_2)}"{name=H},swap]
             & & g_1g_2''' \\ \ar[Rightarrow,from=F,to=G,"{L_{g_1}(\gamma_2'' \circ \gamma_2',\gamma_2)}"{swap,pos=0.55},shorten >=1.5pt] \ar[Rightarrow,from=G,to=H,"{L_{g_1}(\gamma_2'',\gamma_2') \circ id}"{swap,pos=0.5},shorten >=1.5pt] &
        \end{tikzcd} = 
        \begin{tikzcd}[every label/.append style = {font = \tiny},column sep = 10ex]
            g_1g_2 \ar[rr,bend left=75,"{g_1(\gamma_2'' \circ \gamma_2' \circ \gamma_2)}"{name=F}] 
              \ar[rr,bend left = 10,"{(g_1 \gamma_2') \circ g_1(\gamma_2' \circ \gamma_2)}"{name=G},swap,pos=0.6]
              \ar[rr,bend right=80,"{(g_1\gamma_2'') \circ (g_1\gamma_2' \circ g_1\gamma_2)}"{name=H},swap]
             & & g_1g_2''' \\ \ar[Rightarrow,from=F,to=G,"{L_{g_1}(\gamma_2'', \gamma_2'\circ \gamma_2)}"{swap,pos=0.55},shorten >=1.5pt] \ar[Rightarrow,from=G,to=H,"{id \circ L_{g_1}(\gamma_2',\gamma_2)}"{swap,pos=0.5},shorten >=1.5pt] &
        \end{tikzcd} =: \quad L_{g_1}(\gamma_2,\gamma_2',\gamma_2'').
    \end{equation}
    Naturality for the interchange 2-cells $I(\gamma_1,\gamma_2)$ means that, for given $\psi_i: \gamma_i \Rightarrow \eta_i: g_i \rightarrow g_i'$, $i = 1, \,2$, we have
    \begin{equation}
        \begin{tikzcd}[column sep=14ex]
             g_1g_2 \ar[r,"{\gamma_1g_2}",{name=U}] \ar[d,"{}"{name=E}] \ar[d,bend right=100, "{g_1\eta_2}"{name=F},swap]  \ar[Rightarrow,from=E,to=F,"{g_1\psi_2}"]
             & g_1'g_2 \ar[d,"{g_1'\gamma_2}"] \ar[Rightarrow, dl,"{I(\gamma_1,\gamma_2)}",swap]\\
             g_1g_2' \ar[r,"{}"{name=D}] \ar[r,bend right=30, "{\eta_1g_2'}"{name=G},swap] \ar[Rightarrow,from=D,to=G,"{\psi_1g_2'}"]
             & g_1'g_2' 
            \end{tikzcd} = 
         \begin{tikzcd}[column sep=14ex]
             g_1g_2 \ar[r,bend left=40, "{\gamma_1g_2}"{name=F}] \ar[r,"{}"{name=U}] \ar[d,"{g_1\eta_2}"{name=E},swap]   \ar[Rightarrow,from=F,to=U,"{\psi_1g_2}",pos=0.9]
             & g_1'g_2 \ar[d,"{}"{name=R}] \ar[Rightarrow, dl,"{I(\eta_1,\eta_2)}",swap]   \ar[d,bend left=100, "{g_1'\gamma_2}"{name=G}] \ar[Rightarrow,from=G,to=R,"{g_1'\psi_2}"]\\
             g_1g_2' \ar[r,"{\eta_1g_2'}"{name=D},swap] 
             & g_1'g_2' 
            \end{tikzcd}
    \end{equation}
    On the other hand, respecting composition of arrows means that given $g_i \stackrel{\gamma_i}{\rightarrow} g_i' \stackrel{\gamma_i'}{\rightarrow} g_i''$, $i=1, \,2$, we have
    \begin{equation}
        \begin{tikzcd}[column sep=14ex]
            & \ar[Rightarrow,d,"{R_{g_2}(\gamma_1',\gamma_1)}"] & \\
             g_1g_2 \ar[rr,bend left = 40,"{(\gamma_1' \circ \gamma_1)g_2}"{name=A}] \ar[r,"{\gamma_1g_2}",{name=U}] \ar[d,"{g_1\gamma_2}",swap] 
             & g_1'g_2  \ar[d,"{g_1'\gamma_2}",swap] \ar[Rightarrow, dl,"{I(\gamma_1,\gamma_2)}",swap] \ar[r,"{\gamma_1'g_2}",{name=U2}] & g_1''g_2  \ar[d,"{g_1''\gamma_2}"] \ar[Rightarrow, dl,"{I(\gamma_1',\gamma_2)}",swap] \\
             g_1g_2' \ar[r,"{\gamma_1g_2'}",{name=D},swap] 
             & g_1'g_2' \ar[r,"{\gamma_1'g_2'}",{name=D2},swap]  & g_1''g_2'
            \end{tikzcd} = 
        \begin{tikzcd}[column sep=14ex]
             g_1g_2 \ar[r,"{(\gamma_1' \circ \gamma_1)g_2}"{name=A}] \ar[d,"{g_1\gamma_2}",swap] 
             & g_1''g_2  \ar[d,"{g_1''\gamma_2}"] \ar[Rightarrow, dl,"{I(\gamma_1' \circ \gamma_1,\gamma_2)}",swap] \\
             g_1g_2' \ar[r,"{(\gamma_1' \circ \gamma_1)g_2'}",{name=D},swap,pos=0.3] \ar[r,bend right = 80, "{\gamma_1'g_2' \circ \gamma_1g_2'}"{name=T},swap,pos=0.51] \ar[Rightarrow,from=D,to=T,"{R_{g_2'}(\gamma_1',\gamma_1)}",pos=0.2]
             & g_1''g_2'
            \end{tikzcd} ,     
    \end{equation}
    \begin{equation}
        \begin{tikzcd}[every label/.append style = {font = \tiny},column sep=14ex]
             g_1g_2 \ar[r,"{\gamma_1g_2}",{name=U}] \ar[d,"{g_1\gamma_2}",swap] 
             & g_1'g_2  \ar[d,"{}"] \ar[Rightarrow, dl,"{I(\gamma_1,\gamma_2)}",swap] \ar[dd,bend left=80,looseness = 1.5,"{g_1'(\gamma_2'\circ \gamma_2)}",pos=0.5] &[-10ex] \\
             g_1g_2' \ar[r,"{\gamma_1g_2'}",{name=D}] \ar[d,"{g_1\gamma_2'}",swap]
             & g_1'g_2' \ar[d,"{}"] \ar[Rightarrow, dl,"{I(\gamma_1,\gamma_2')}",swap] & [-10ex] \ar[Rightarrow,l,"{L_{g_1'}(\gamma_2,\gamma_2')}",swap]\\
             g_1g_2'' \ar[r,"{\gamma_1g_2''}",{name=2},swap] 
             & g_1'g_2'' & [-10ex]
            \end{tikzcd} = 
        \begin{tikzcd}[every label/.append style = {font = \tiny},column sep=14ex,row sep = 10ex]
             g_1g_2 \ar[r,"{\gamma_1 g_2}"{name=A}] \ar[d,"{}"{name=X},swap] \ar[d,bend right=80,looseness=2,"{g_1\gamma_2' \circ g_1\gamma_2}"{name=Y},swap] \ar[Rightarrow,from=X,to=Y,"{L_{g_1}(\gamma_2,\gamma_2')}",swap]
             & g_1'g_2  \ar[d,"{g_1'(\gamma_2' \circ \gamma_2)}"] \ar[Rightarrow, dl,"{I(\gamma_1,\gamma_2' \circ \gamma_2)}",swap] \\
             g_1g_2'' \ar[r,"{\gamma_1g_2''}",{name=D},swap]
             & g_1'g_2''
            \end{tikzcd} .  
    \end{equation}
    For the associator 2-cells, naturality on 2-cells means that for given $\psi_i: \gamma_i \Rightarrow \eta_i: g_i \rightarrow g_i'$, $i = 1, \,2, \,3$ we have
    \begin{equation}
            \begin{tikzcd}[column sep=11ex]
             (g_1g_2)g_3 \ar[r,"{\alpha(g_1,g_2,g_3)}",{name=U}] \ar[d,"{}"{name=A}] \ar[d,bend right= 100,"{(\gamma_1g_2)g_3)}"{name=B},swap] \ar[Rightarrow,from=B,to=A,"{(\psi_1g_2)g_3}"]
             & g_1(g_2g_3) \ar[d,"{\eta_1(g_2g_3)}"]  \\
             (g_1'g_2)g_3 \ar[r,"{\alpha(g_1',g_2,g_3)}",{name=D},swap] \ar[Rightarrow, ur,"{\alpha(\eta_1,g_2,g_3)}"]
             & g_1'(g_2g_3)
            \end{tikzcd} =
            \begin{tikzcd}[column sep=11ex]
             (g_1g_2)g_3 \ar[r,"{\alpha(g_1,g_2,g_3)}",{name=U}] \ar[d,"{(\gamma_1g_2)g_3}",swap] 
             & g_1(g_2g_3) \ar[d,"{}"{name=B}] \ar[d,bend left= 100,"{\eta_1(g_2g_3)}"{name=A}] \ar[Rightarrow,from=B,to=A,"{\psi_1(g_2g_3)}",pos=0.4] \\
             (g_1'g_2)g_3 \ar[r,"{\alpha(g_1',g_2,g_3)}",{name=D},swap] \ar[Rightarrow, ur,"{\alpha(\gamma_1,g_2,g_3)}"] 
             & g_1'(g_2g_3)
            \end{tikzcd},
            \end{equation}
    \begin{equation}
            \begin{tikzcd}[column sep=11ex]
             (g_1g_2)g_3 \ar[r,"{\alpha(g_1,g_2,g_3)}",{name=U}] \ar[d,"{}"{name=A}] \ar[d,bend right= 100,"{(g_1\gamma_2)g_3}"{name=B},swap] \ar[Rightarrow,from=B,to=A,"{(g_1\psi_2)g_3}"]
             & g_1(g_2g_3) \ar[d,"{g_1(\eta_2g_3)}"]  \\
             (g_1g_2')g_3 \ar[r,"{\alpha(g_1,g_2',g_3)}",{name=D},swap]  \ar[Rightarrow, ur,"{\alpha(g_1,\eta_2,g_3)}"]
             & g_1(g_2'g_3)
            \end{tikzcd} =
            \begin{tikzcd}[column sep=11ex]
             (g_1g_2)g_3 \ar[r,"{\alpha(g_1,g_2,g_3)}",{name=U}] \ar[d,"{(g_1\gamma_2)g_3}",swap] 
             & g_1(g_2g_3) \ar[d,"{}"{name=B}]  \ar[d,bend left= 100,"{g_1(\eta_2g_3)}"{name=A}] \ar[Rightarrow,from=B,to=A,"{g_1(\psi_2g_3)}",pos=0.4] \\
             (g_1g_2')g_3 \ar[r,"{\alpha(g_1,g_2',g_3)}",{name=D},swap] \ar[Rightarrow, ur,"{\alpha(g_1,\gamma_2,g_3)}"]
             & g_1(g_2'g_3)
            \end{tikzcd},
            \end{equation}
    \begin{equation}
            \begin{tikzcd}[column sep=11ex]
             (g_1g_2)g_3 \ar[r,"{\alpha(g_1,g_2,g_3)}",{name=U}] \ar[d,"{}"{name=A}] \ar[d,bend right= 100,"{(g_1g_2)\gamma_3}"{name=B},swap] \ar[Rightarrow,from=B,to=A,"{(g_1g_2)\psi_3}"]
             & g_1(g_2g_3) \ar[d,"{g_1(g_2\eta_3)}"]  \\
             (g_1g_2)g_3' \ar[r,"{\alpha(g_1,g_2,g_3')}",{name=D},swap]  \ar[Rightarrow, ur,"{\alpha(g_1,g_2,\eta_3)}"]
             & g_1(g_2g_3')
            \end{tikzcd} =
            \begin{tikzcd}[column sep=11ex]
             (g_1g_2)g_3 \ar[r,"{\alpha(g_1,g_2,g_3)}",{name=U}] \ar[d,"{(g_1g_2)\gamma_3}",swap] 
             & g_1(g_2g_3) \ar[d,"{}"{name=B}]  \ar[d,bend left= 100,"{g_1(g_2\eta_3)}"{name=A}] \ar[Rightarrow,from=B,to=A,"{g_1(g_2\psi_3)}",pos=0.4] \\
             (g_1g_2)g_3' \ar[r,"{\alpha(g_1,g_2,g_3')}",{name=D},swap] \ar[Rightarrow, ur,"{\alpha(g_1,g_2,\gamma_3)}"]
             & g_1(g_2g_3')
            \end{tikzcd}.
            \end{equation}
    Respecting composition of arrows means that for given $g_i \stackrel{\gamma_i}{\rightarrow} g_i' \stackrel{\gamma_i'}{\rightarrow} g_i''$, $i=1, \,2, \,3$ we have
    \begin{align}
    \begin{split}
            \begin{tikzcd}[column sep=14ex,ampersand replacement = \&, nodes in empty cells=true]
             (g_1g_2)g_3 \ar[r,"{\alpha(g_1,g_2,g_3)}",{name=U}] \ar[dd,"{((\gamma_1' \circ \gamma_1)g_2)g_3}",swap]  
             \& g_1(g_2g_3) \ar[dd,"{(\gamma_1' \circ \gamma_1)(g_2g_3)}",swap,pos=0.8]  \ar[dd,bend left = 100,looseness=1.5,"{\gamma_1'(g_2g_3) \circ \gamma_1(g_2g_3)}"{name=A}] \ar[Rightarrow,from=2-2,to=A,"{R_{g_2g_3}(\gamma_1,\gamma_1')}",pos=0.5]\\
              \& \\
             (g_1''g_2)g_3 \ar[r,"{\alpha(g_1'',g_2,g_3)}",{name=A},swap] \ar[Rightarrow, uur,"{\alpha(\gamma_1' \circ \gamma_1,g_2,g_3)}"] 
             \& g_1''(g_2g_3)
            \end{tikzcd} = \\
            \begin{tikzcd}[column sep=14ex,row sep = 8ex, ampersand replacement = \&]
             (g_1g_2)g_3 \ar[r,"{\alpha(g_1,g_2,g_3)}",{name=U}] \ar[d,bend left = 50,"{(\gamma_1g_2)g_3}"{name=A},pos=0.3] \ar[dd,bend right = 60,looseness=0.75,"{}"{name=B},swap] \ar[dd,bend right = 90,looseness=2.2,"{((\gamma_1' \circ \gamma_1) g_2)g_3}"{name=C},swap] \ar[Rightarrow,from=B,to=A,"{R_{g_3}(\gamma_1g_2,\gamma_1'g_2)}",pos=0.5,shorten=2ex,sloped] \ar[Rightarrow,from=C,to=B,"{R_{g_2}(\gamma_1,\gamma_1')g_3}",pos=0.45]
             \& g_1(g_2g_3) \ar[d,"{\gamma_1(g_2g_3)}"] \\
             (g_1'g_2)g_3 \ar[d,bend left = 50,"{(\gamma_1'g_2)g_3}"{name=A},pos=0.3]  \ar[r] \ar[Rightarrow, ur,"{\alpha(\gamma_1,g_2,g_3)}",pos=0.4,swap] \& g_1'(g_2g_3) \ar[d,"{\gamma_1'(g_2g_3)}"]  \\
             (g_1''g_2)g_3 \ar[r,"{\alpha(g_1'',g_2,g_3)}",{name=D},swap]  \ar[Rightarrow, ur,"{\alpha(\gamma_1',g_2,g_3)}",pos=0.4,swap]
             \& g_1''(g_2g_3)
            \end{tikzcd},
    \end{split}
    \end{align}

    \begin{align}
    \begin{split}
            \begin{tikzcd}[column sep=18ex,ampersand replacement = \&, nodes in empty cells=true]
             (g_1g_2)g_3 \ar[r,"{\alpha(g_1,g_2,g_3)}",{name=U}] \ar[dd,"{(g_1(\gamma_2' \circ \gamma_2))g_3}",swap]  
             \& g_1(g_2g_3) \ar[dd,bend right = 50,"{g_1((\gamma_2' \circ \gamma_2)g_3)}"{name=x},swap,pos=0.7]  \ar[dd,bend left = 70,"{}"{name=A}] \ar[dd,bend left = 100, looseness = 3.5,"{g_1(\gamma_2'g_3) \circ g_1(\gamma_2g_3)}"{name=B}] \ar[Rightarrow,from=x,to=A,"{g_1R_{g_3}(\gamma_2,\gamma_2')}",pos=0.9] \ar[Rightarrow,from=A,to=B,"{L_{g_1}(\gamma_2g_3,\gamma_2'g_3)}",pos=0.45]\\
              \& \\
             (g_1g_2'')g_3 \ar[r,"{\alpha(g_1,g_2'',g_3)}",{name=A},swap] \ar[Rightarrow, uur,"{\alpha(g_1,\gamma_2' \circ \gamma_2,g_3)}"] 
             \& g_1(g_2''g_3)
            \end{tikzcd} = \\
            \begin{tikzcd}[column sep=14ex,row sep = 8ex, ampersand replacement = \&]
             (g_1g_2)g_3 \ar[r,"{\alpha(g_1,g_2,g_3)}",{name=U}] \ar[d,bend left = 50,"{(g_1\gamma_2)g_3}"{name=A},pos=0.3] \ar[dd,bend right = 60,looseness=0.75,"{}"{name=B},swap] \ar[dd,bend right = 90,looseness=2.2,"{(g_1(\gamma_2' \circ \gamma_2))g_3}"{name=C},swap] \ar[Rightarrow,from=B,to=A,"{R_{g_3}(g_1\gamma_2,g_1\gamma_2')}",pos=0.5,shorten=2ex,sloped] \ar[Rightarrow,from=C,to=B,"{L_{g_1}(\gamma_2,\gamma_2')g_3}",pos=0.45]
             \& g_1(g_2g_3) \ar[d,"{g_1(\gamma_2g_3)}"] \\
             (g_1g_2')g_3 \ar[d,bend left = 50,"{(g_1\gamma_2')g_3}"{name=A},pos=0.3]  \ar[r] \ar[Rightarrow, ur,"{\alpha(g_1,\gamma_2,g_3)}",pos=0.4,swap] \& g_1(g_2'g_3) \ar[d,"{g_1(\gamma_2'g_3)}"]  \\
             (g_1g_2'')g_3 \ar[r,"{\alpha(g_1,g_2'',g_3)}",{name=D},swap]  \ar[Rightarrow, ur,"{\alpha(g_1,\gamma_2',g_3)}",pos=0.4,swap]
             \& g_1(g_2''g_3)
            \end{tikzcd},
    \end{split}
    \end{align}

    \begin{align}
    \begin{split}
            \begin{tikzcd}[column sep=18ex,ampersand replacement = \&, nodes in empty cells=true]
             (g_1g_2)g_3 \ar[r,"{\alpha(g_1,g_2,g_3)}",{name=U}] \ar[dd,"{(g_1g_2)(\gamma_3' \circ \gamma_3)}",swap]  
             \& g_1(g_2g_3) \ar[dd,bend right = 50,"{g_1(g_2(\gamma_3' \circ \gamma_3))}"{name=x},swap,pos=0.7]  \ar[dd,bend left = 70,"{}"{name=A}] \ar[dd,bend left = 100, looseness = 3.5,"{g_1(g_2\gamma_3') \circ g_1(g_2\gamma_3)}"{name=B}] \ar[Rightarrow,from=x,to=A,"{g_1L_{g_2}(\gamma_3,\gamma_3')}",pos=0.9] \ar[Rightarrow,from=A,to=B,"{L_{g_1}(g_2\gamma_3,g_2\gamma_3')}",pos=0.45]\\
              \& \\
             (g_1g_2)g_3'' \ar[r,"{\alpha(g_1,g_2,g_3'')}",{name=A},swap] \ar[Rightarrow, uur,"{\alpha(g_1,g_2,\gamma_3' \circ \gamma_3)}"] 
             \& g_1(g_2g_3'')
            \end{tikzcd} = \\
            \begin{tikzcd}[column sep=10ex,row sep = 8ex, ampersand replacement = \&]
             (g_1g_2)g_3 \ar[r,"{\alpha(g_1,g_2,g_3)}",{name=U}] \ar[d,"{(g_1g_2)\gamma_3}"{name=A},pos=0.3,swap] \ar[dd,bend right = 80,looseness=1.5,"{(g_1g_2)(\gamma_3' \circ \gamma_3)}"{name=C},swap] \ar[Rightarrow,from=C,to=2-1,"{L_{g_1g_2}(\gamma_3,\gamma_3')}",pos=0.6]
             \& g_1(g_2g_3) \ar[d,"{g_1(g_2\gamma_3)}"] \\
             (g_1g_2)g_3' \ar[d,"{(g_1g_2)\gamma_3'}"{name=A},pos=0.3,swap]  \ar[r] \ar[Rightarrow, ur,"{\alpha(g_1,g_2,\gamma_3)}",pos=0.5] \& g_1(g_2g_3') \ar[d,"{g_1(g_2\gamma_3')}"]  \\
             (g_1g_2)g_3'' \ar[r,"{\alpha(g_1,g_2,g_3'')}",{name=D},swap]  \ar[Rightarrow, ur,"{\alpha(g_1,g_2,\gamma_3')}",pos=0.5]
             \& g_1(g_2g_3'')
            \end{tikzcd},
    \end{split}
    \end{align}
    
    and respecting the interchange cells means that given $\gamma_i:g_i \rightarrow g_i'$, $i=1, \,2,\, 3$ we have
    \begin{align}
    \begin{split}
    \begin{tikzcd}[column sep=10ex,row sep = 5ex, ampersand replacement = \&]
             (g_1g_2)g_3 \ar[r,"{\alpha(g_1,g_2,g_3)}",{name=U}] \ar[d,"{(\gamma_1g_2)g_3}"{name=A},pos=0.3,swap] \ar[dd,bend right = 100,looseness=1.75,"{(g_1'\gamma_2 \circ \gamma_1g_2)g_3}"{name=C},swap] \ar[Rightarrow,from=C,to=2-1,"{R_{g_3}(\gamma_1g_2,g_1'\gamma_2)}",pos=0.65]
             \& g_1(g_2g_3) \ar[d,"{\gamma_1(g_2g_3)}"] \ar[dd,bend left = 100,looseness=1.75,"{\gamma_1(g_2'g_3) \circ g_1(\gamma_2g_3)}"{name=D}] \ar[Rightarrow,from=2-2,to=D,"{I(\gamma_1,\gamma_2g_3)}",pos=0.5] \\
             (g_1'g_2)g_3 \ar[d,"{(g_1'\gamma_2)g_3}"{name=A},pos=0.3,swap]  \ar[r] \ar[Rightarrow, ur,"{\alpha(\gamma_1,g_2,g_3)}",pos=0.6] \& g_1'(g_2g_3) \ar[d,"{g_1'(\gamma_2g_3)}"]  \\
             (g_1'g_2')g_3 \ar[r,"{\alpha(g_1',g_2',g_3)}",{name=D},swap]  \ar[Rightarrow, ur,"{\alpha(g_1',\gamma_2,g_3)}",pos=0.6]
             \& g_1'(g_2'g_3)
            \end{tikzcd} = \\
    \begin{tikzcd}[column sep=14ex,row sep = 6ex, ampersand replacement = \&]
             (g_1g_2)g_3 \ar[r,"{\alpha(g_1,g_2,g_3)}",{name=U}] \ar[d,bend left = 50,"{(g_1\gamma_2)g_3}"{name=A},pos=0.3] \ar[dd,bend right = 80,looseness=1,"{}"{name=B},swap] \ar[dd,bend right = 110,looseness=2,"{(g_1'\gamma_2 \circ \gamma_1g_2)g_3}"{name=C},swap] \ar[Rightarrow,from=B,to=A,"{R_{g_3}(g_1\gamma_2,\gamma_1g_2')}",pos=0.5,shorten=2ex,sloped] \ar[Rightarrow,from=C,to=B,"{I(\gamma_1,\gamma_2)g_3}",pos=0.6]
             \& g_1(g_2g_3) \ar[d,"{g_1(\gamma_2g_3)}"] \\
             (g_1g_2')g_3 \ar[d,bend left = 50,"{(\gamma_1g_2')g_3}"{name=A},pos=0.3]  \ar[r] \ar[Rightarrow, ur,"{\alpha(g_1,\gamma_2,g_3)}",pos=0.4,swap] \& g_1(g_2'g_3) \ar[d,"{\gamma_1(g_2'g_3)}"]  \\
             (g_1'g_2')g_3 \ar[r,"{\alpha(g_1',g_2',g_3)}",{name=D},swap]  \ar[Rightarrow, ur,"{\alpha(\gamma_1,g_2',g_3)}",pos=0.4,swap]
             \& g_1'(g_2'g_3)
            \end{tikzcd} ,    
    \end{split}
    \end{align}

    \begin{align}
    \begin{split}
    \begin{tikzcd}[column sep=10ex,row sep = 5ex, ampersand replacement = \&]
             (g_1g_2)g_3 \ar[r,"{\alpha(g_1,g_2,g_3)}",{name=U}] \ar[d,"{(g_1g_2)\gamma_3}"{name=A},pos=0.3,swap] \ar[dd,bend right = 80,looseness=1.5,"{(g_1g_2')\gamma_3 \circ (g_1\gamma_2)g_3}"{name=C},swap] \ar[Rightarrow,from=C,to=2-1,"{I(g_1\gamma_2,\gamma_3)}",pos=0.65]
             \& g_1(g_2g_3) \ar[d,"{g_1(g_2\gamma_3)}"] \ar[dd,bend left = 100,looseness=2.35,"{g_1(\gamma_2g_3' \circ g_2\gamma_3)}"{name=D}] \ar[Rightarrow,from=2-2,to=D,"{L_{g_1}(g_2\gamma_3,\gamma_2g_3')^{-1}}",pos=0.45] \\
             (g_1g_2)g_3' \ar[d,"{(g_1\gamma_2)g_3'}"{name=A},pos=0.3,swap]  \ar[r] \ar[Rightarrow, ur,"{\alpha(g_1,g_2,\gamma_3)}",pos=0.6] \& g_1(g_2g_3') \ar[d,"{g_1(\gamma_2g_3')}"]  \\
             (g_1g_2')g_3' \ar[r,"{\alpha(g_1,g_2',g_3')}",{name=D},swap]  \ar[Rightarrow, ur,"{\alpha(g_1,\gamma_2,g_3')}",pos=0.6]
             \& g_1(g_2'g_3')
            \end{tikzcd} = \\
    \begin{tikzcd}[column sep=14ex,ampersand replacement = \&]
             (g_1g_2)g_3 \ar[r,"{\alpha(g_1,g_2,g_3)}",{name=U}] \ar[d,"{(g_1\gamma_2)g_3}",swap]  
             \& g_1(g_2g_3) \ar[d,"{g_1(\gamma_2g_3)}",swap]  \ar[dd,bend left = 90,looseness=3,"{}"{name=A}] \ar[dd,bend left = 100,looseness=5,"{g_1(\gamma_2g_3' \circ g_2\gamma_3)}"{name=B}]  \ar[Rightarrow,from=2-2,to=A,"{L_{g_1}(\gamma_2g_3,g_2'\gamma_3)^{-1}}",pos=0.4] \ar[Rightarrow,from=A,to=B,"{g_1I(\gamma_2,\gamma_3)}",pos=0.4]\\
             (g_1g_2')g_3 \ar[r,"{}",{name=D},swap] \ar[d,"{(g_1g_2')\gamma_3}",swap] \ar[Rightarrow, ur,"{\alpha(g_1,\gamma_2,g_3)}"]
             \& g_1(g_2'g_3) \ar[d,"{g_1(g_2'\gamma_3)}",swap] \\
             (g_1g_2')g_3' \ar[r,"{\alpha(g_1,g_2',g_3')}",{name=A},swap]  \ar[Rightarrow, ur,"{\alpha(g_1,g_2',\gamma_3)}"]
             \& g_1(g_2'g_3')
            \end{tikzcd},    
    \end{split}
    \end{align}

    \begin{align}
    \begin{split}
    \begin{tikzcd}[column sep=10ex,row sep = 8ex, ampersand replacement = \&]
             (g_1g_2)g_3 \ar[r,"{\alpha(g_1,g_2,g_3)}",{name=U}] \ar[d,"{(g_1g_2)\gamma_3}"{name=A},pos=0.3,swap] \ar[dd,bend right = 80,looseness=1.5,"{(g_1'g_2)\gamma_3 \circ (\gamma_1g_2)g_3}"{name=C},swap] \ar[Rightarrow,from=C,to=2-1,"{I(\gamma_1g_2,\gamma_3)}",pos=0.6]
             \& g_1(g_2g_3) \ar[d,"{g_1(g_2\gamma_3)}"] \\
             (g_1g_2)g_3' \ar[d,"{(\gamma_1g_2)g_3'}"{name=A},pos=0.3,swap]  \ar[r] \ar[Rightarrow, ur,"{\alpha(g_1,g_2,\gamma_3)}",pos=0.5] \& g_1(g_2g_3') \ar[d,"{\gamma_1(g_2g_3')}"]  \\
             (g_1'g_2)g_3' \ar[r,"{\alpha(g_1',g_2,g_3')}",{name=D},swap]  \ar[Rightarrow, ur,"{\alpha(\gamma_1,g_2,g_3')}",pos=0.5]
             \& g_1'(g_2g_3')
            \end{tikzcd} = \\
    \begin{tikzcd}[column sep=14ex,ampersand replacement = \&]
             (g_1g_2)g_3 \ar[r,"{\alpha(g_1,g_2,g_3)}",{name=U}] \ar[d,"{(\gamma_1g_2)g_3}",swap]  
             \& g_1(g_2g_3) \ar[d,"{\gamma_1(g_2g_3)}",swap]  \ar[dd,bend left = 90,looseness=1.5,"{\gamma_1(g_2g_3') \circ g_1(g_2\gamma_3)}"{name=A}] \ar[Rightarrow,from=2-2,to=A,"{I(\gamma_1,g_2\gamma_3)}",pos=0.4]\\
             (g_1'g_2)g_3 \ar[r,"{}",{name=D},swap] \ar[d,"{(g_1'g_2)\gamma_3}",swap] \ar[Rightarrow, ur,"{\alpha(\gamma_1,g_2,g_3)}"]
             \& g_1'(g_2g_3) \ar[d,"{g_1'(g_2\gamma_3)}",swap] \\
             (g_1'g_2)g_3' \ar[r,"{\alpha(g_1',g_2,g_3')}",{name=A},swap]  \ar[Rightarrow, ur,"{\alpha(g_1',g_2,\gamma_3)}"]
             \& g_1'(g_2g_3')
            \end{tikzcd}.  
    \end{split}
    \end{align}
    
    Finally, naturality on arrows for the pentagonator 2-cells means that for $\gamma_i:g_i \rightarrow g_i'$, $i=1, \,...,\, 4$ we have
    \begin{align}
    \begin{split}
    \begin{tikzcd}[ampersand replacement = \&]
	\& {((g_1'g_2)g_3)g_4} \& {((g_1g_2)g_3)g_4} \\
	{(g_1'(g_2g_3))g_4} \&\& {(g_1'g_2)(g_3g_4)} \& [3ex] {(g_1g_2)(g_3g_4)} \\
	{g_1'((g_2g_3)g_4)} \&\& {g_1'(g_2(g_3g_4))} \& [3ex] {g_1(g_2(g_3g_4))}
	\arrow[from=1-2, to=2-3]
	\arrow[from=2-3, to=3-3]
	\arrow["{((\gamma_1g_2)g_3)g_4}"', from=1-3, to=1-2]
	\arrow[from=2-4, to=2-3]
	\arrow["{\gamma_1(g_2(g_3g_4))}", from=3-4, to=3-3]
	\arrow["{\alpha(g_1g_2,g_3,g_4)}", from=1-3, to=2-4]
	\arrow["{\alpha(g_1',g_2,g_3)g_4}"', from=1-2, to=2-1]
	\arrow["{\alpha(g_1',g_2g_3,g_4)}"', from=2-1, to=3-1]
	\arrow["{g_1'\alpha(g_2,g_3,g_4)}"', from=3-1, to=3-3]
	\arrow["{\alpha(g_1,g_2,g_3g_4)}", from=2-4, to=3-4]
        \arrow[Rightarrow, "{\Sigma(g_1',g_2,g_3,g_4)}", from=3-1, to=2-3]
        \arrow[Rightarrow, "{\alpha(\gamma_1g_2,g_3,g_4)}", from=1-3, to=2-3,pos=0.6]
        \arrow[Rightarrow, "{\alpha(\gamma_1,g_2,g_3g_4)}"', from=2-4, to=3-3,pos=0.4]
    \end{tikzcd} = \\
    \begin{tikzcd}[ampersand replacement = \&]
	\& {((g_1g_2)g_3)g_4} \& [3ex] {((g_1g_2)g_3)g_4} \\
	{(g_1'(g_2g_3))g_4} \& [3ex] {(g_1(g_2g_3))g_4} \&\& {(g_1g_2)(g_3g_4)} \\
	{g_1'((g_2g_3)g_4)} \& [3ex] {g_1((g_2g_3)g_4)} \&\& {g_1(g_2(g_3g_4))} \\
        \& g_1'(g_2(g_3g_4))
	\arrow["{\alpha(g_1g_2,g_3,g_4)}", from=1-3, to=2-4]
	\arrow["{\alpha(g_1,g_2,g_3g_4)}", from=2-4, to=3-4]
	\arrow[from=1-3, to=2-2]
	\arrow[from=2-2, to=3-2]
	\arrow[from=3-2, to=3-4]
	\arrow["{\alpha(g_1',g_2,g_3)g_4}"', from=1-2, to=2-1]
	\arrow["{\alpha(g_1',g_2g_3,g_4)}"', from=2-1, to=3-1]
	\arrow[from=2-2, to=2-1]
	\arrow[from=3-2, to=3-1]
	\arrow["{((\gamma_1g_2)g_3)g_4}"', from=1-3, to=1-2]
        \arrow["{g_1'\alpha(g_2,g_3,g_4)}"', from=3-1, to = 4-2]
        \arrow["{\gamma_1(g_2(g_3g_4))}",from=3-4, to=4-2]
        \arrow[Rightarrow, "{\Sigma(g_1,g_2,g_3,g_4)}", from=3-2, to=2-4]
        \arrow[Rightarrow, "{\alpha(\gamma_1,g_2,g_3)g_4}", from=1-2, to=2-2,pos=0.4]
        \arrow[Rightarrow, "{\alpha(\gamma_1,g_2g_3,g_4)}"', from=2-2, to=3-1,pos=0.4]
        \arrow[Rightarrow, "{I(\gamma_1,\alpha(g_2,g_3,g_4))}"',from=4-2,to=3-2]
    \end{tikzcd},
    \end{split}
    \end{align}

    \begin{align}
    \begin{split}
    \begin{tikzcd}[ampersand replacement = \&]
	\& {((g_1g_2')g_3)g_4} \& {((g_1g_2)g_3)g_4} \\
	{(g_1(g_2'g_3))g_4} \&\& {(g_1g_2')(g_3g_4)} \& [3ex] {(g_1g_2)(g_3g_4)} \\
	{g_1((g_2'g_3)g_4)} \&\& {g_1(g_2'(g_3g_4))} \& [3ex] {g_1(g_2(g_3g_4))}
	\arrow[from=1-2, to=2-3]
	\arrow[from=2-3, to=3-3]
	\arrow["{((g_1\gamma_2)g_3)g_4}"', from=1-3, to=1-2]
	\arrow[from=2-4, to=2-3]
	\arrow["{g_1(\gamma_2(g_3g_4))}", from=3-4, to=3-3]
	\arrow["{\alpha(g_1g_2,g_3,g_4)}", from=1-3, to=2-4]
	\arrow["{\alpha(g_1,g_2',g_3)g_4}"', from=1-2, to=2-1]
	\arrow["{\alpha(g_1,g_2'g_3,g_4)}"', from=2-1, to=3-1]
	\arrow["{g_1\alpha(g_2',g_3,g_4)}"', from=3-1, to=3-3]
	\arrow["{\alpha(g_1,g_2,g_3g_4)}", from=2-4, to=3-4]
        \arrow[Rightarrow, "{\Sigma(g_1,g_2',g_3,g_4)}", from=3-1, to=2-3]
        \arrow[Rightarrow, "{\alpha(g_1\gamma_2,g_3,g_4)}", from=1-3, to=2-3,pos=0.6]
        \arrow[Rightarrow, "{\alpha(g_1,\gamma_2,g_3g_4)}"', from=2-4, to=3-3,pos=0.4]
    \end{tikzcd} = \\
    \begin{tikzcd}[ampersand replacement = \&]
	\& {((g_1g_2)g_3)g_4} \& [3ex] {((g_1g_2)g_3)g_4} \\
	{(g_1(g_2'g_3))g_4} \& [3ex] {(g_1(g_2g_3))g_4} \&\& {(g_1g_2)(g_3g_4)} \\
	{g_1((g_2'g_3)g_4)} \& [3ex] {g_1((g_2g_3)g_4)} \&\& {g_1(g_2(g_3g_4))} \\
        \& g_1(g_2'(g_3g_4))
	\arrow["{\alpha(g_1g_2,g_3,g_4)}", from=1-3, to=2-4]
	\arrow["{\alpha(g_1,g_2,g_3g_4)}", from=2-4, to=3-4]
	\arrow[from=1-3, to=2-2]
	\arrow[from=2-2, to=3-2]
	\arrow[from=3-2, to=3-4]
	\arrow["{\alpha(g_1,g_2',g_3)g_4}"', from=1-2, to=2-1]
	\arrow["{\alpha(g_1,g_2'g_3,g_4)}"', from=2-1, to=3-1]
	\arrow[from=2-2, to=2-1]
	\arrow[from=3-2, to=3-1]
	\arrow["{((g_1\gamma_2)g_3)g_4}"', from=1-3, to=1-2]
        \arrow["{g_1\alpha(g_2',g_3,g_4)}"', from=3-1, to = 4-2]
        \arrow["{g_1(\gamma_2(g_3g_4))}",from=3-4, to=4-2]
        \arrow[Rightarrow, "{\Sigma(g_1,g_2,g_3,g_4)}", from=3-2, to=2-4]
        \arrow[Rightarrow, "{\alpha(g_1,\gamma_2,g_3)g_4}", from=1-2, to=2-2,pos=0.4]
        \arrow[Rightarrow, "{\alpha(g_1,\gamma_2g_3,g_4)}"', from=2-2, to=3-1,pos=0.4]
        \arrow[Rightarrow, "{g_1\alpha(\gamma_2,g_3,g_4)}"',from=4-2,to=3-2]
    \end{tikzcd},
    \end{split}
    \end{align}

    \begin{align}
    \begin{split}
    \begin{tikzcd}[ampersand replacement = \&]
	\& {((g_1g_2)g_3')g_4} \& {((g_1g_2)g_3)g_4} \\
	{(g_1(g_2g_3'))g_4} \&\& {(g_1g_2)(g_3'g_4)} \& [3ex] {(g_1g_2)(g_3'g_4)} \\
	{g_1((g_2g_3')g_4)} \&\& {g_1(g_2(g_3'g_4))} \& [3ex] {g_1(g_2(g_3g_4))}
	\arrow[from=1-2, to=2-3]
	\arrow[from=2-3, to=3-3]
	\arrow["{((g_1g_2)\gamma_3)g_4}"', from=1-3, to=1-2]
	\arrow[from=2-4, to=2-3]
	\arrow["{g_1(g_2(\gamma_3g_4))}", from=3-4, to=3-3]
	\arrow["{\alpha(g_1g_2,g_3,g_4)}", from=1-3, to=2-4]
	\arrow["{\alpha(g_1,g_2,g_3')g_4}"', from=1-2, to=2-1]
	\arrow["{\alpha(g_1,g_2g_3',g_4)}"', from=2-1, to=3-1]
	\arrow["{g_1\alpha(g_2,g_3',g_4)}"', from=3-1, to=3-3]
	\arrow["{\alpha(g_1,g_2,g_3g_4)}", from=2-4, to=3-4]
        \arrow[Rightarrow, "{\Sigma(g_1,g_2,g_3',g_4)}", from=3-1, to=2-3]
        \arrow[Rightarrow, "{\alpha(g_1g_2,\gamma_3,g_4)}", from=1-3, to=2-3,pos=0.6]
        \arrow[Rightarrow, "{\alpha(g_1,g_2,\gamma_3g_4)}"', from=2-4, to=3-3,pos=0.4]
    \end{tikzcd} = \\
    \begin{tikzcd}[ampersand replacement = \&]
	\& {((g_1g_2)g_3)g_4} \& [3ex] {((g_1g_2)g_3)g_4} \\
	{(g_1(g_2g_3'))g_4} \& [3ex] {(g_1(g_2g_3))g_4} \&\& {(g_1g_2)(g_3g_4)} \\
	{g_1((g_2g_3')g_4)} \& [3ex] {g_1((g_2g_3)g_4)} \&\& {g_1(g_2(g_3g_4))} \\
        \& g_1(g_2(g_3'g_4))
	\arrow["{\alpha(g_1g_2,g_3,g_4)}", from=1-3, to=2-4]
	\arrow["{\alpha(g_1,g_2,g_3g_4)}", from=2-4, to=3-4]
	\arrow[from=1-3, to=2-2]
	\arrow[from=2-2, to=3-2]
	\arrow[from=3-2, to=3-4]
	\arrow["{\alpha(g_1,g_2,g_3')g_4}"', from=1-2, to=2-1]
	\arrow["{\alpha(g_1,g_2g_3',g_4)}"', from=2-1, to=3-1]
	\arrow[from=2-2, to=2-1]
	\arrow[from=3-2, to=3-1]
	\arrow["{((g_1g_2)\gamma_3)g_4}"', from=1-3, to=1-2]
        \arrow["{g_1\alpha(g_2,g_3',g_4)}"', from=3-1, to = 4-2]
        \arrow["{g_1(g_2(\gamma_3g_4))}",from=3-4, to=4-2]
        \arrow[Rightarrow, "{\Sigma(g_1,g_2,g_3,g_4)}", from=3-2, to=2-4]
        \arrow[Rightarrow, "{\alpha(g_1,g_2,\gamma_3)g_4}", from=1-2, to=2-2,pos=0.4]
        \arrow[Rightarrow, "{\alpha(g_1,g_2\gamma_3,g_4)}"', from=2-2, to=3-1,pos=0.4]
        \arrow[Rightarrow, "{g_1\alpha(g_2,\gamma_3,g_4)}"',from=4-2,to=3-2]
    \end{tikzcd},
    \end{split}
    \end{align}

    \begin{align}
    \begin{split}
    \begin{tikzcd}[ampersand replacement = \&]
	\& {((g_1g_2)g_3)g_4'} \& {((g_1g_2)g_3)g_4} \\
	{(g_1(g_2g_3))g_4'} \&\& {(g_1g_2)(g_3g_4')} \& [3ex] {(g_1g_2)(g_3g_4')} \\
	{g_1((g_2g_3)g_4')} \&\& {g_1(g_2(g_3g_4'))} \& [3ex] {g_1(g_2(g_3g_4))}
	\arrow[from=1-2, to=2-3]
	\arrow[from=2-3, to=3-3]
	\arrow["{((g_1g_2)g_3)\gamma_4}"', from=1-3, to=1-2]
	\arrow[from=2-4, to=2-3]
	\arrow["{g_1(g_2(g_3\gamma_4))}", from=3-4, to=3-3]
	\arrow["{\alpha(g_1g_2,g_3,g_4)}", from=1-3, to=2-4]
	\arrow["{\alpha(g_1,g_2,g_3)g_4'}"', from=1-2, to=2-1]
	\arrow["{\alpha(g_1,g_2g_3,g_4')}"', from=2-1, to=3-1]
	\arrow["{g_1\alpha(g_2,g_3,g_4')}"', from=3-1, to=3-3]
	\arrow["{\alpha(g_1,g_2,g_3g_4)}", from=2-4, to=3-4]
        \arrow[Rightarrow, "{\Sigma(g_1,g_2,g_3,g_4')}", from=3-1, to=2-3]
        \arrow[Rightarrow, "{\alpha(g_1g_2,g_3,\gamma_4)}", from=1-3, to=2-3,pos=0.6]
        \arrow[Rightarrow, "{\alpha(g_1,g_2,g_3\gamma_4)}"', from=2-4, to=3-3,pos=0.4]
    \end{tikzcd} = \\
    \begin{tikzcd}[ampersand replacement = \&]
	\& {((g_1g_2)g_3)g_4} \& [3ex] {((g_1g_2)g_3)g_4} \\
	{(g_1(g_2g_3))g_4'} \& [3ex] {(g_1(g_2g_3))g_4} \&\& {(g_1g_2)(g_3g_4)} \\
	{g_1((g_2g_3)g_4')} \& [3ex] {g_1((g_2g_3)g_4)} \&\& {g_1(g_2(g_3g_4))} \\
        \& g_1(g_2(g_3g_4'))
	\arrow["{\alpha(g_1g_2,g_3,g_4)}", from=1-3, to=2-4]
	\arrow["{\alpha(g_1,g_2,g_3g_4)}", from=2-4, to=3-4]
	\arrow[bend left= 30, from=1-3, to=2-2]
	\arrow[from=2-2, to=3-2]
	\arrow[from=3-2, to=3-4]
	\arrow["{\alpha(g_1,g_2,g_3)g_4'}"', from=1-2, to=2-1]
	\arrow["{\alpha(g_1,g_2g_3,g_4')}"', from=2-1, to=3-1]
	\arrow[from=2-2, to=2-1]
	\arrow[from=3-2, to=3-1]
	\arrow["{((g_1g_2)g_3)\gamma_4}"', from=1-3, to=1-2]
        \arrow["{g_1\alpha(g_2,g_3,g_4')}"', from=3-1, to = 4-2]
        \arrow["{g_1(g_2(g_3\gamma_4))}",from=3-4, to=4-2]
        \arrow[Rightarrow, "{\Sigma(g_1,g_2,g_3,g_4)}", from=3-2, to=2-4]
        \arrow[Rightarrow, "{I(\alpha(g_1,g_2,g_3),\gamma_4)^{-1}}", from=1-2, to=2-2,pos=0.4]
        \arrow[Rightarrow, "{\alpha(g_1,g_2g_3,\gamma_4)}"', from=2-2, to=3-1,pos=0.4]
        \arrow[Rightarrow, "{g_1\alpha(g_2,g_3,\gamma_4)}"',from=4-2,to=3-2]
    \end{tikzcd},
    \end{split}
    \end{align}

    \pagebreak

    \subsection{Homomorphisms of Lie 3-groups}\label{sec:hom3groups}
     
    For the purposes of this article it is not necessary to give the most general definition of a homomorphism between Lie 3-groups, so we just present a definition that covers all the homomorphisms that appear here.
    
    \begin{definition}\label{def:hom3groups}
        A \emph{homomorphism of Lie 3-groups} $\phi:\mathcal{G}^1 \rightarrow \mathcal{G}^2$ is given by 
        \begin{enumerate}
            \item A smooth pseudofunctor $\phi:\mathcal{G}^1 \rightarrow \mathcal{G}^2$. In particular, given $g \stackrel{\gamma}{\rightarrow} g' \stackrel{\gamma'}{\rightarrow} g''$ in $\mathcal{G}^1$ we have a compositor in $\mathcal{G}^2$ denoted by
                \begin{equation}
                \begin{tikzcd}[column sep = 10ex]
            \phi(g) \ar[rr,bend left=60,"{\phi(\gamma' \circ \gamma)}"{name=F}] 
              \ar[rr,bend right=10,"{\phi(\gamma') \circ \phi(\gamma)}"{name=G},swap]
             & & \phi(g'') \\ \ar[Rightarrow,from=F,to=G,"{\phi(\gamma,\gamma')}"{swap,pos=0.5},shorten >=1.5pt] &
                \end{tikzcd}
            \end{equation}
            and required to be natural on 2-cells of $\mathcal{G}^1$ and associative on arrows of $\mathcal{G}^1$.
            
            \item For $g_i \in \mathcal{G}^1_0$, $i=1, \,2$, multiplicator arrows
            \begin{equation}
                \phi(g_1g_2) \stackrel{\phi(g_1,g_2)}{\rightarrow} \phi(g_1)\phi(g_2),
            \end{equation}
            and for $\gamma_i:g_i \rightarrow g_i' \in \mathcal{G}_1^1$, $i=1, \,2$, multiplicator 2-cells 
            \begin{equation}
            \begin{tikzcd}[column sep=14ex]
             \phi(g_1g_2) \ar[r,"{\phi(g_1,g_2)}",{name=U}] \ar[d,"{\phi(\gamma_1g_2)}",swap] 
             & \phi(g_1)\phi(g_2) \ar[d,"{\phi(\gamma_1)\phi(g_2)}"] \\
             \phi(g_1'g_2) \ar[r,"{\phi(g_1',g_2)}",{name=D},swap] \ar[Rightarrow, ur,"{\phi(\gamma_1,g_2)}"] 
             & \phi(g_1')\phi(g_2)
            \end{tikzcd},
            \begin{tikzcd}[column sep=14ex]
             \phi(g_1g_2) \ar[r,"{\phi(g_1,g_2)}",{name=U}] \ar[d,"{\phi(g_1\gamma_2)}",swap] 
             & \phi(g_1)\phi(g_2) \ar[d,"{\phi(g_1)\phi(\gamma_2)}"] \\
             \phi(g_1g_2') \ar[r,"{\phi(g_1,g_2')}",{name=D},swap] \ar[Rightarrow, ur,"{\phi(g_1,\gamma_2)}"] 
             & \phi(g_1)\phi(g_2')
            \end{tikzcd}
            \end{equation}
        These must be natural on 2-cells of $\mathcal{G}^1$, respect composition of arrows of $\mathcal{G}^1$ and intertwine the interchange cells of $\mathcal{G}^1$ and $\mathcal{G}^2$. 
        \item For $g_i \in  \mathcal{G}_0^1$, $i=1, \,2, \,3$, associator 2-cells 
        \begin{equation}
            \begin{tikzcd}[column sep=18ex]
             \phi((g_1g_2)g_3) \ar[r,"{\phi(\alpha(g_1,g_2,g_3))}",{name=U}] \ar[d,"{\phi(g_1g_2,g_3)}",swap] 
             & \phi(g_1(g_2g_3)) \ar[d,"{\phi(g_1,g_2g_3)}"] \ar[Rightarrow, ddl,"{\phi(g_1,g_2,g_3)}",swap] \\
             \phi(g_1g_2)\phi(g_3)  \ar[d,"{\phi(g_1,g_2)\phi(g_3)}",swap]
             & \phi(g_1)\phi(g_2g_3) \ar[d,"{\phi(g_1)\phi(g_2,g_3)}"] \\
             (\phi(g_1)\phi(g_2))\phi(g_3) \ar[r,"{\alpha(\phi(g_1),\phi(g_2),\phi(g_3))}",swap] & \phi(g_1)(\phi(g_2)\phi(g_3))
            \end{tikzcd}
        \end{equation}
        that are required to be natural on arrows of $\mathcal{G}^1$ and to intertwine the pentagonators of $\mathcal{G}^1$ and $\mathcal{G}^2$.
        \end{enumerate} 
    \end{definition}
        \pagebreak 
    We spell out in detail the conditions in \cref{def:hom3groups}. First, the smooth pseudofunctor $\phi: \mathcal{G}^1 \rightarrow \mathcal{G}^2$ is a map of objects, arrows and 2-cells preserving all source, target and identity maps. Naturality on 2-cells for its compositor means that for $\psi:\gamma \Rightarrow \eta:g \rightarrow g' \in \mathcal{G}^1_2$, $\psi':\gamma' \Rightarrow \eta':g' \rightarrow g'' \in \mathcal{G}^1_2$ we must have
    \begin{equation}
        \begin{tikzcd}[every label/.append style = {font = \tiny},column sep = 10ex]
            \phi(g) \ar[rr,bend left=60,"{\phi(\gamma' \circ \gamma)}"{name=F}] 
              \ar[rr,bend left = 10,"{\phi(\gamma') \circ \phi(\gamma)}"{name=G},swap]
              \ar[rr,bend right=80,"{\phi(\eta') \circ \phi(\eta)}"{name=H},swap]
             & & \phi(g'') \\ \ar[Rightarrow,from=F,to=G,"{\phi(\gamma,\gamma')}"{swap,pos=0.5},shorten >=1.5pt] \ar[Rightarrow,from=G,to=H,"{\phi(\psi') \circ \phi(\psi)}"{swap,pos=0.35},shorten >=1.5pt] &
        \end{tikzcd} = 
        \begin{tikzcd}[every label/.append style = {font = \tiny},column sep = 10ex]
            \phi(g) \ar[rr,bend left=60,"{\phi(\gamma' \circ \gamma)}"{name=F}] 
              \ar[rr,bend left = 10,"{\phi(\eta' \circ \eta)}"{name=G},swap]
              \ar[rr,bend right=80,"{\phi(\eta') \circ \phi(\eta)}"{name=H},swap]
             & & \phi(g'') \\ \ar[Rightarrow,from=F,to=G,"{\phi(\psi' \circ \psi)}"{swap,pos=0.5},shorten >=1.5pt] \ar[Rightarrow,from=G,to=H,"{\phi(\eta,\eta')}"{swap,pos=0.35},shorten >=1.5pt] &
        \end{tikzcd},
    \end{equation}
    
    and associativity on arrows of $\mathcal{G}^1$ means that for $g \stackrel{\gamma}{\rightarrow} g' \stackrel{\gamma'}{\rightarrow} g'' \stackrel{\gamma''}{\rightarrow} g'''$ in $\mathcal{G}^1$ we must have

    \begin{equation}
        \begin{tikzcd}[every label/.append style = {font = \tiny},column sep = 10ex]
            \phi(g) \ar[rr,bend left=75,"{\phi(\gamma'' \circ \gamma' \circ \gamma)}"{name=F},pos=0.5] 
              \ar[rr,bend left = 10,"{\phi(\gamma''\circ \gamma')  \circ \phi(\gamma)}"{name=G},swap,pos=0.60]
              \ar[rr,bend right=80,"{(\phi(\gamma'') \circ \phi(\gamma')) \circ \phi(\gamma)}"{name=H},swap]
             & & \phi(g''') \\ \ar[Rightarrow,from=F,to=G,"{\phi(\gamma'' \circ \gamma',\gamma)}"{swap,pos=0.55},shorten >=1.5pt] \ar[Rightarrow,from=G,to=H,"{\phi(\gamma'',\gamma') \circ id}"{swap,pos=0.5},shorten >=1.5pt] &
        \end{tikzcd} = 
        \begin{tikzcd}[every label/.append style = {font = \tiny},column sep = 10ex]
            \phi(g) \ar[rr,bend left=75,"{\phi(\gamma'' \circ \gamma' \circ \gamma)}"{name=F}] 
              \ar[rr,bend left = 10,"{\phi(\gamma'') \circ \phi(\gamma' \circ \gamma)}"{name=G},swap,pos=0.6]
              \ar[rr,bend right=80,"{\phi(\gamma'') \circ (\phi(\gamma') \circ \phi(\gamma))}"{name=H},swap]
             & &\phi(g''') \\ \ar[Rightarrow,from=F,to=G,"{\phi(\gamma'', \gamma'\circ \gamma)}"{swap,pos=0.55},shorten >=1.5pt] \ar[Rightarrow,from=G,to=H,"{id \circ \phi(\gamma',\gamma)}"{swap,pos=0.5},shorten >=1.5pt] &
        \end{tikzcd} =: \phi(\gamma,\gamma',\gamma''),
    \end{equation}
    
    Naturality for the multiplicator 2-cells means that for $\psi_i: \gamma_i \Rightarrow \eta_i \in \mathcal{G}_2^1$, $i=1, \,2$,

    \begin{equation}
            \begin{tikzcd}[column sep=11ex]
             \phi(g_1g_2) \ar[r,"{\phi(g_1,g_2)}",{name=U}] \ar[d,"{}"{name=A}] \ar[d,bend right= 100,"{\phi(\gamma_1g_2)}"{name=B},swap] \ar[Rightarrow,from=B,to=A,"{\phi(\psi_1g_2)}"]
             & \phi(g_1)\phi(g_2) \ar[d,"{\phi(\eta_1)\phi(g_2)}"]  \\
             \phi(g_1'g_2) \ar[r,"{\phi(g_1',g_2)}",{name=D},swap] \ar[Rightarrow, ur,"{\phi(\eta_1,g_2)}"]
             & \phi(g_1')\phi(g_2)
            \end{tikzcd} =
            \begin{tikzcd}[column sep=11ex]
             \phi(g_1g_2) \ar[r,"{\phi(g_1,g_2)}",{name=U}] \ar[d,"{\phi(\gamma_1g_2)}",swap] 
             & \phi(g_1)\phi(g_2) \ar[d,"{}"{name=B}] \ar[d,bend left= 100,"{\phi(\eta_1)\phi(g_2)}"{name=A}] \ar[Rightarrow,from=B,to=A,"{\phi(\psi_1)\phi(g_2)}",pos=0.4] \\
             \phi(g_1'g_2) \ar[r,"{\phi(g_1',g_2)}",{name=D},swap] \ar[Rightarrow, ur,"{\phi(\gamma_1,g_2)}"] 
             & \phi(g_1')\phi(g_2)
            \end{tikzcd},
            \end{equation}
    
     \begin{equation}
            \begin{tikzcd}[column sep=11ex]
             \phi(g_1g_2) \ar[r,"{\phi(g_1,g_2)}",{name=U}] \ar[d,"{}"{name=A}] \ar[d,bend right= 100,"{\phi(g_1\gamma_2)}"{name=B},swap] \ar[Rightarrow,from=B,to=A,"{\phi(g_1\psi_2)}"]
             & \phi(g_1)\phi(g_2) \ar[d,"{\phi(g_1)\phi(\eta_2)}"]  \\
             \phi(g_1g_2') \ar[r,"{\phi(g_1,g_2')}",{name=D},swap] \ar[Rightarrow, ur,"{\phi(g_1,\eta_2)}"]
             & \phi(g_1)\phi(g_2')
            \end{tikzcd} =
            \begin{tikzcd}[column sep=11ex]
             \phi(g_1g_2) \ar[r,"{\phi(g_1,g_2)}",{name=U}] \ar[d,"{\phi(g_1\gamma_2)}",swap] 
             & \phi(g_1)\phi(g_2) \ar[d,"{}"{name=B}] \ar[d,bend left= 100,"{\phi(g_1)\phi(\eta_2)}"{name=A}] \ar[Rightarrow,from=B,to=A,"{\phi(g_1)\phi(\psi_2)}",pos=0.4] \\
             \phi(g_1g_2') \ar[r,"{\phi(g_1,g_2')}",{name=D},swap] \ar[Rightarrow, ur,"{\phi(g_1,\gamma_2)}"] 
             & \phi(g_1)\phi(g_2')
            \end{tikzcd},
            \end{equation}
    
    The requirement to respect composition for the multiplicator 2-cells means that for $g_i \stackrel{\gamma_i}{\rightarrow} g_i' \stackrel{\gamma_i'}{\rightarrow} g_i''$ in $\mathcal{G}^1$, $i=1,2$,
    \begin{align}
    \begin{split}
        \begin{tikzcd}[every label/.append style = {font = \tiny},column sep=14ex, ampersand replacement = \&]
             \phi(g_1g_2) \ar[r,"{\phi(g_1,g_2)}",{name=U}] \ar[d,"{}",swap] \ar[dd,bend right=80,looseness = 2,"{}"{name=A},pos=0.5,swap] \ar[dd,bend right=100,looseness = 4.5,"{\phi((\gamma_1'\circ \gamma_1)g_2)}"{name=B},pos=0.5,swap]
             \& \phi(g_1)\phi(g_2)  \ar[d,"{\phi(\gamma_1)\phi(g_2)}"]   \&[-10ex] \\
             \phi(g_1'g_2) \ar[Rightarrow, ur,"{\phi(\gamma_1,g_2)}"] \ar[r,"{\phi(g_1',g_2)}",{name=D}] \ar[d,"{}",swap] \ar[Rightarrow,from=A,to=2-1,"{\phi(\gamma_1g_2,\gamma_1'g_2)}",pos=0.6]
             \ar[Rightarrow,from=B,to=A,"{\phi(R_{g_2}(\gamma_1,\gamma_1'))}"]
             \& \phi(g_1')\phi(g_2) \ar[d,"{\phi(\gamma_1')\phi(g_2)}"]  \& [-10ex] \\
             \phi(g_1''g_2) \ar[Rightarrow, ur,"{\phi(\gamma_1',g_2)}"] \ar[r,"{\phi(g_1'',g_2)}",{name=2},swap] 
             \& \phi(g_1'')\phi(g_2) \& [-10ex]
            \end{tikzcd} = \\
        \begin{tikzcd}[every label/.append style = {font = \tiny},column sep=14ex,row sep = 10ex, ampersand replacement = \&]
             \phi(g_1g_2) \ar[r,"{\phi(g_1,g_2)}"{name=A}] \ar[d,"{\phi((\gamma_1' \circ \gamma_1)g_2)}"{name=X},swap]
             \& \phi(g_1)\phi(g_2)  \ar[d,"{}"{name=A},swap] \ar[d,bend left = 80, looseness = 1.5, "{}"{name=B}] \ar[d,bend left = 100, looseness = 4.5, "{\phi(\gamma_1')\phi(g_2) \circ \phi(\gamma_1)\phi(g_2)}"{name=C}] \ar[Rightarrow, from=A,to=B,"{\phi(\gamma_1,\gamma_1')\phi(g_2)}"] \ar[Rightarrow, from=B,to=C,"{R_{\phi(g_2)}(\phi(\gamma_1),\phi(\gamma_1'))}",pos=0.45]\\
             \phi(g_1''g_2) \ar[r,"{\phi(g_1'',g_2)}",{name=D},swap] \ar[Rightarrow, ur,"{\phi(\gamma_1'\circ \gamma_1,g_2)}"]
             \& \phi(g_1'')\phi(g_2).
            \end{tikzcd} ,
    \end{split}  
    \end{align}
    \begin{align}
    \begin{split}
        \begin{tikzcd}[every label/.append style = {font = \tiny},column sep=14ex, ampersand replacement = \&]
             \phi(g_1g_2) \ar[r,"{\phi(g_1,g_2)}",{name=U}] \ar[d,"{}",swap] \ar[dd,bend right=80,looseness = 2,"{}"{name=A},pos=0.5,swap] \ar[dd,bend right=100,looseness = 4.5,"{\phi(g_1(\gamma_2'\circ \gamma_2))}"{name=B},pos=0.5,swap]
             \& \phi(g_1)\phi(g_2)  \ar[d,"{\phi(g_1)\phi(\gamma_2)}"]   \&[-10ex] \\
             \phi(g_1g_2') \ar[Rightarrow, ur,"{\phi(g_1,\gamma_2)}"] \ar[r,"{\phi(g_1,g_2')}",{name=D}] \ar[d,"{}",swap] \ar[Rightarrow,from=A,to=2-1,"{\phi(g_1\gamma_2,g_1\gamma_2')}",pos=0.6]
             \ar[Rightarrow,from=B,to=A,"{\phi(L_{g_1}(\gamma_2,\gamma_2'))}"]
             \& \phi(g_1)\phi(g_2') \ar[d,"{\phi(g_1)\phi(\gamma_2')}"]  \& [-10ex] \\
             \phi(g_1g_2'') \ar[Rightarrow, ur,"{\phi(g_1,\gamma_2')}"] \ar[r,"{\phi(g_1,g_2'')}",{name=2},swap] 
             \& \phi(g_1)\phi(g_2'') \& [-10ex]
            \end{tikzcd} = \\
        \begin{tikzcd}[every label/.append style = {font = \tiny},column sep=14ex,row sep = 10ex, ampersand replacement = \&]
             \phi(g_1g_2) \ar[r,"{\phi(g_1,g_2)}"{name=A}] \ar[d,"{\phi(g_1(\gamma_2' \circ \gamma_2))}"{name=X},swap]
             \& \phi(g_1)\phi(g_2)  \ar[d,"{}"{name=A},swap] \ar[d,bend left = 80, looseness = 1.5, "{}"{name=B}] \ar[d,bend left = 100, looseness = 4.5, "{\phi(g_1)\phi(\gamma_2') \circ \phi(g_1)\phi(\gamma_2)}"{name=C}] \ar[Rightarrow, from=A,to=B,"{\phi(g_1)\phi(\gamma_2,\gamma_2')}"] \ar[Rightarrow, from=B,to=C,"{L_{\phi(g_1)}(\phi(\gamma_2),\phi(\gamma_2'))}",pos=0.45]\\
             \phi(g_1g_2'') \ar[r,"{\phi(g_1,g_2'')}",{name=D},swap] \ar[Rightarrow, ur,"{\phi(g_1,\gamma_2'\circ \gamma_2)}"]
             \& \phi(g_1)\phi(g_2'')
            \end{tikzcd} .
    \end{split}  
    \end{align}
    and respecting interchange cells means that for $\gamma_i:g_i \rightarrow g_i' \in \mathcal{G}^1_1$, $i=1,2$
    \begin{align}
    \begin{split}
        \begin{tikzcd}[ampersand replacement=\&, column sep = 15ex, row sep = 14ex]
	\& {\phi(g_1g_2)} \& {\phi(g_1'g_2)} \\
	{\phi(g_1)\phi(g_2)} \& {\phi(g_1g_2')} \& {\phi(g_1'g_2')} \\
	{\phi(g_1)\phi(g_2')} \& {\phi(g_1')\phi(g_2')}
	\arrow[from=1-2, to=2-2]
	\arrow[from=2-2, to=2-3]
	\arrow["{\phi(\gamma_1g_2)}", from=1-2, to=1-3]
	\arrow["{\phi(g_1'\gamma_2)}", from=1-3, to=2-3]
	\arrow[from=2-2, to=3-1]
	\arrow["{\phi(\gamma_1)\phi(g_2')}"', from=3-1, to=3-2]
	\arrow["{\phi(g_1',g_2')}", from=2-3, to=3-2]
	\arrow["{\phi(g_1,g_2)}"', from=1-2, to=2-1]
	\arrow["{\phi(g_1)\phi(\gamma_2)}"', from=2-1, to=3-1]
        \arrow[Rightarrow, from=1-3, to=2-2, "{\substack{\phi(g_1\gamma_2,\gamma_1g_2') \circ \\ \phi(I(\gamma_1,\gamma_2)) \circ  \\ \phi(\gamma_1g_2,g_1'\gamma_2)^{-1}}}"']
        \arrow[Rightarrow, from=2-2, to=2-1, "{\phi(g_1,\gamma_2)}"']
        \arrow[Rightarrow, from=2-3, to=3-1, "{\phi(\gamma_1,g_2')}"']
\end{tikzcd} = \\
        \begin{tikzcd}[ampersand replacement=\&, column sep = 15ex, row sep = 14ex]
	\& {\phi(g_1g_2)} \& {\phi(g_1'g_2)} \\
	{\phi(g_1)\phi(g_2)} \& {\phi(g_1')\phi(g_2)} \& {\phi(g_1'g_2')} \\
	{\phi(g_1)\phi(g_2')} \& {\phi(g_1')\phi(g_2')}
	\arrow["{\phi(\gamma_1g_2)}", from=1-2, to=1-3]
	\arrow["{\phi(g_1'\gamma_2)}", from=1-3, to=2-3]
	\arrow["{\phi(\gamma_1)\phi(g_2')}"', from=3-1, to=3-2]
	\arrow["{\phi(g_1',g_2')}", from=2-3, to=3-2]
	\arrow["{\phi(g_1,g_2)}"', from=1-2, to=2-1]
	\arrow["{\phi(g_1)\phi(\gamma_2)}"', from=2-1, to=3-1]
	\arrow[from=1-3, to=2-2]
	\arrow[from=2-1, to=2-2]
	\arrow[from=2-2, to=3-2]
        \arrow[Rightarrow,from=1-3,to=2-1,"{\phi(\gamma_1,g_2)}"']
        \arrow[Rightarrow,from=2-2,to=3-1,"{I(\phi(\gamma_1),\phi(\gamma_2))}"']
        \arrow[Rightarrow,from=2-3,to=2-2,"{\phi(g_1',\gamma_2)}"']
\end{tikzcd}
\end{split}
    \end{align}
        \pagebreak[4]

    Finally, the associator cells being natural on arrows of $\mathcal{G}^1$ means that for $\gamma_i:g_i \rightarrow g_i' \in \mathcal{G}_1^1$, $i=1,\,2,\,3$,
    \begin{align}
    \begin{split}
            \begin{tikzcd}[column sep=16ex, ampersand replacement = \&, row sep = 12ex]
             \& \phi((g_1'g_2)g_3) \ar[dr,"{\phi(\alpha(g_1',g_2,g_3))}"{name=S}] \& \\
             \phi((g_1g_2)g_3) \ar[ur,"{\phi((\gamma_1g_2)g_3)}"] \ar[r,"{\phi(\alpha(g_1,g_2,g_3))}"'{name=Q}] \ar[d,"{\phi(g_1g_2,g_3)}",swap] \ar[Rightarrow,from=S,to=Q,"{\substack{\varphi(\alpha(g_1,g_2,g_3),\gamma_1(g_2g_3)) \circ \\  \phi(\alpha(\gamma_1,g_2,g_3)) \circ \\ \varphi((\gamma_1g_2)g_3,\alpha(g_1',g_2,g_3))^{-1} }}"'{font=\footnotesize}]
             \& \phi(g_1(g_2g_3)) \ar[d,"{}"] \ar[Rightarrow, ddl,"{\phi(g_1,g_2,g_3)}",swap]  \ar[r,"{\phi(\gamma_1(g_2g_3))}"'] \& \phi(g_1'(g_2g_3)) \ar[d,"{\phi(g_1',g_2g_3)}"] \ar[Rightarrow, dl, "{\phi(\gamma_1,g_2g_3)}"]\\
             \phi(g_1g_2)\phi(g_3)  \ar[d,"{\phi(g_1,g_2)\phi(g_3)}",swap]
             \& \phi(g_1)\phi(g_2g_3) \ar[d,"{}"] \ar[r] \& \phi(g_1')\phi(g_2g_3) \ar[d,"{\phi(g_1')\phi(g_2,g_3)}"] \ar[Rightarrow, dl, "{I(\phi(\gamma_1),\phi(g_2,g_3))}"] \\
             (\phi(g_1)\phi(g_2))\phi(g_3) \ar[r,"{\alpha(\phi(g_1),\phi(g_2),\phi(g_3))}"'] \& \phi(g_1)(\phi(g_2)\phi(g_3)) \ar[r,"{\phi(\gamma_1)(\phi(g_2)\phi(g_3))}"'] \& \phi(g_1')(\phi(g_2)\phi(g_3))
            \end{tikzcd} = \\[8ex]
            \begin{tikzcd}[column sep=16ex, ampersand replacement = \&, row sep = 7ex]
             \phi((g_1g_2)g_3) \ar[r,"{\phi((\gamma_1g_2)g_3)}"] \ar[d,"{\phi(g_1g_2,g_3)}"'] \& \phi((g_1'g_2)g_3) \ar[r,"{\phi(\alpha(g_1',g_2,g_3))}",{name=U}] \ar[d,"{}",swap] \ar[Rightarrow,dl,"{\phi(\gamma_1g_2,g_3)}"']
             \& \phi(g_1'(g_2g_3)) \ar[d,"{\phi(g_1',g_2g_3)}"] \ar[Rightarrow, ddl,"{\phi(g_1',g_2,g_3)}",swap] \\
             \phi(g_1g_2)\phi(g_3) \ar[r] \ar[d,"{\phi(g_1,g_2)\phi(g_3)}"'] \& \phi(g_1'g_2)\phi(g_3)  \ar[d,"{}",swap] \ar[Rightarrow,dl,"{\phi(\gamma_1,g_2)\phi(g_3)}"']
             \& \phi(g_1')\phi(g_2g_3) \ar[d,"{\phi(g_1')\phi(g_2,g_3)}"] \\
             (\phi(g_1)\phi(g_2))\phi(g_3) \ar[r,"{(\phi(\gamma_1)\phi(g_2))\phi(g_3)}"'] \ar[dr,"{\alpha(\phi(g_1),\phi(g_2),\phi(g_3))}"'] \& (\phi(g_1')\phi(g_2))\phi(g_3) \ar[r,"{\alpha(\phi(g_1'),\phi(g_2),\phi(g_3))}"] \ar[Rightarrow,d,"{\alpha(\phi(\gamma_1),\phi(g_2),\phi(g_3))}"',pos=0.4] \& \phi(g_1')(\phi(g_2)\phi(g_3)) \\
              \& \phi(g_1)((\phi(g_2)\phi(g_3)) \ar[ur,"{\phi(\gamma_1)(\phi(g_2)\phi(g_3))}"'] \&
            \end{tikzcd},
    \end{split}
    \end{align} \pagebreak[4]

    \begin{align}
    \begin{split}
            \begin{tikzcd}[column sep=16ex, ampersand replacement = \&, row sep = 12ex]
             \& \phi((g_1g_2')g_3) \ar[dr,"{\phi(\alpha(g_1,g_2',g_3))}"{name=S}] \& \\
             \phi((g_1g_2)g_3) \ar[ur,"{\phi((g_1\gamma_2)g_3)}"] \ar[r,"{\phi(\alpha(g_1,g_2,g_3))}"'{name=Q}] \ar[d,"{\phi(g_1g_2,g_3)}",swap] \ar[Rightarrow,from=S,to=Q,"{\substack{\varphi(\alpha(g_1,g_2,g_3),g_1(\gamma_2g_3)) \circ \\  \phi(\alpha(g_1,\gamma_2,g_3)) \circ \\ \varphi((g_1\gamma_2)g_3,\alpha(g_1,g_2',g_3))^{-1} }}"'{font=\footnotesize}]
             \& \phi(g_1(g_2g_3)) \ar[d,"{}"] \ar[Rightarrow, ddl,"{\phi(g_1,g_2,g_3)}",swap]  \ar[r,"{\phi(g_1(\gamma_2g_3))}"'] \& \phi(g_1(g_2'g_3)) \ar[d,"{\phi(g_1,g_2'g_3)}"] \ar[Rightarrow, dl, "{\phi(g_1,\gamma_2g_3)}"]\\
             \phi(g_1g_2)\phi(g_3)  \ar[d,"{\phi(g_1,g_2)\phi(g_3)}",swap]
             \& \phi(g_1)\phi(g_2g_3) \ar[d,"{}"] \ar[r] \& \phi(g_1)\phi(g_2'g_3) \ar[d,"{\phi(g_1)\phi(g_2',g_3)}"] \ar[Rightarrow, dl, "{\phi(g_1)\phi(\gamma_2,g_3)}"] \\
             (\phi(g_1)\phi(g_2))\phi(g_3) \ar[r,"{\alpha(\phi(g_1),\phi(g_2),\phi(g_3))}"'] \& \phi(g_1)(\phi(g_2)\phi(g_3)) \ar[r,"{\phi(g_1)(\phi(\gamma_2)\phi(g_3))}"'] \& \phi(g_1)(\phi(g_2')\phi(g_3))
            \end{tikzcd} = \\[8ex]
            \begin{tikzcd}[column sep=16ex, ampersand replacement = \&, row sep = 7ex]
             \phi((g_1g_2)g_3) \ar[r,"{\phi((g_1\gamma_2)g_3)}"] \ar[d,"{\phi(g_1g_2,g_3)}"'] \& \phi((g_1g_2')g_3) \ar[r,"{\phi(\alpha(g_1,g_2',g_3))}",{name=U}] \ar[d,"{}",swap] \ar[Rightarrow,dl,"{\phi(g_1\gamma_2,g_3)}"']
             \& \phi(g_1(g_2'g_3)) \ar[d,"{\phi(g_1,g_2'g_3)}"] \ar[Rightarrow, ddl,"{\phi(g_1,g_2',g_3)}",swap] \\
             \phi(g_1g_2)\phi(g_3) \ar[r] \ar[d,"{\phi(g_1,g_2)\phi(g_3)}"'] \& \phi(g_1g_2')\phi(g_3)  \ar[d,"{}",swap] \ar[Rightarrow,dl,"{\phi(g_1,\gamma_2)\phi(g_3)}"']
             \& \phi(g_1)\phi(g_2'g_3) \ar[d,"{\phi(g_1)\phi(g_2',g_3)}"] \\
             (\phi(g_1)\phi(g_2))\phi(g_3) \ar[r,"{(\phi(g_1)\phi(\gamma_2))\phi(g_3)}"'] \ar[dr,"{\alpha(\phi(g_1),\phi(g_2),\phi(g_3))}"'] \& (\phi(g_1)\phi(g_2'))\phi(g_3) \ar[r,"{\alpha(\phi(g_1),\phi(g_2'),\phi(g_3))}"] \ar[Rightarrow,d,"{\alpha(\phi(g_1),\phi(\gamma_2),\phi(g_3))}"',pos=0.4] \& \phi(g_1)(\phi(g_2')\phi(g_3)) \\
              \& \phi(g_1)((\phi(g_2)\phi(g_3)) \ar[ur,"{\phi(g_1)(\phi(\gamma_2)\phi(g_3))}"'] \&
            \end{tikzcd},
    \end{split}
    \end{align} \pagebreak[4]

    \begin{align}
    \begin{split}
            \begin{tikzcd}[column sep=16ex, ampersand replacement = \&, row sep = 12ex]
             \& \phi((g_1g_2)g_3') \ar[dr,"{\phi(\alpha(g_1,g_2,g_3'))}"{name=S}] \& \\
             \phi((g_1g_2)g_3) \ar[ur,"{\phi((g_1g_2)\gamma_3)}"] \ar[r,"{\phi(\alpha(g_1,g_2,g_3))}"'{name=Q}] \ar[d,"{\phi(g_1g_2,g_3)}",swap] \ar[Rightarrow,from=S,to=Q,"{\substack{\varphi(\alpha(g_1,g_2,g_3),g_1(g_2\gamma_3)) \circ \\  \phi(\alpha(g_1,g_2,\gamma_3)) \circ \\ \varphi((g_1g_2)\gamma_3,\alpha(g_1,g_2,g_3'))^{-1} }}"'{font=\footnotesize}]
             \& \phi(g_1(g_2g_3)) \ar[d,"{}"] \ar[Rightarrow, ddl,"{\phi(g_1,g_2,g_3)}",swap]  \ar[r,"{\phi(g_1(g_2\gamma_3))}"'] \& \phi(g_1(g_2g_3')) \ar[d,"{\phi(g_1,g_2g_3')}"] \ar[Rightarrow, dl, "{\phi(g_1,g_2\gamma_3)}"]\\
             \phi(g_1g_2)\phi(g_3)  \ar[d,"{\phi(g_1,g_2)\phi(g_3)}",swap]
             \& \phi(g_1)\phi(g_2g_3) \ar[d,"{}"] \ar[r] \& \phi(g_1)\phi(g_2g_3') \ar[d,"{\phi(g_1)\phi(g_2,g_3')}"] \ar[Rightarrow, dl, "{\phi(g_1)\phi(g_2,\gamma_3)}"] \\
             (\phi(g_1)\phi(g_2))\phi(g_3) \ar[r,"{\alpha(\phi(g_1),\phi(g_2),\phi(g_3))}"'] \& \phi(g_1)(\phi(g_2)\phi(g_3)) \ar[r,"{\phi(g_1)(\phi(g_2)\phi(\gamma_3))}"'] \& \phi(g_1)(\phi(g_2)\phi(g_3'))
            \end{tikzcd} = \\[8ex]
            \begin{tikzcd}[column sep=16ex, ampersand replacement = \&, row sep = 7ex]
             \phi((g_1g_2)g_3) \ar[r,"{\phi((g_1g_2)\gamma_3)}"] \ar[d,"{\phi(g_1g_2,g_3)}"'] \& \phi((g_1g_2)g_3') \ar[r,"{\phi(\alpha(g_1,g_2,g_3'))}",{name=U}] \ar[d,"{}",swap] \ar[Rightarrow,dl,"{\phi(g_1g_2,\gamma_3)}"']
             \& \phi(g_1(g_2g_3')) \ar[d,"{\phi(g_1,g_2g_3')}"] \ar[Rightarrow, ddl,"{\phi(g_1,g_2,g_3')}",swap] \\
             \phi(g_1g_2)\phi(g_3) \ar[r] \ar[d,"{\phi(g_1,g_2)\phi(g_3)}"'] \& \phi(g_1g_2)\phi(g_3')  \ar[d,"{}",swap] \ar[Rightarrow,dl,"{I(\phi(g_1,g_2),\phi(\gamma_3))^{-1}}"']
             \& \phi(g_1)\phi(g_2g_3') \ar[d,"{\phi(g_1)\phi(g_2,g_3')}"] \\
             (\phi(g_1)\phi(g_2))\phi(g_3) \ar[r,"{(\phi(g_1)\phi(g_2))\phi(\gamma_3)}"'] \ar[dr,"{\alpha(\phi(g_1),\phi(g_2),\phi(g_3))}"'] \& (\phi(g_1)\phi(g_2))\phi(g_3') \ar[r,"{\alpha(\phi(g_1),\phi(g_2),\phi(g_3'))}"] \ar[Rightarrow,d,"{\alpha(\phi(g_1),\phi(g_2),\phi(\gamma_3))}"',pos=0.4] \& \phi(g_1)(\phi(g_2)\phi(g_3')) \\
              \& \phi(g_1)((\phi(g_2)\phi(g_3)) \ar[ur,"{\phi(g_1)(\phi(g_2)\phi(\gamma_3))}"'] \&
            \end{tikzcd}.
    \end{split}
    \end{align} \pagebreak[4]

    and for them to intertwine the pentagonators means that for $g_i \in \mathcal{G}^1_0$, $i=1, \,...,\,4$,

    \begin{align}
    \begin{split}
    \begin{tikzcd}[ampersand replacement=\&, scale cd=0.83]
	{\phi(((g_1g_2)g_3)g_4)} \& {\phi((g_1g_2)g_3)\phi(g_4)} \& {(\phi(g_1g_2)\phi(g_3))\phi(g_4)} \& {((\phi(g_1)\phi(g_2))\phi(g_3))\phi(g_4)} \\
	{\phi((g_1(g_2g_3))g_4)} \& {\phi((g_1g_2)(g_3g_4))} \& {\phi(g_1g_2)(\phi(g_3)\phi(g_4))} \& {(\phi(g_1)\phi(g_2))(\phi(g_3)\phi(g_4))} \\
	{\phi(g_1((g_2g_3)g_4))} \& {\phi(g_1(g_2(g_3g_4)))} \& {\phi(g_1g_2)\phi(g_3g_4)} \\
	\&\& {(\phi(g_1)\phi(g_2))\phi(g_3g_4)} \& {\phi(g_1)(\phi(g_2)(\phi(g_3)\phi(g_4)))} \\
	{\phi(g_1)\phi((g_2g_3)g_4)} \& {\phi(g_1)\phi(g_2(g_3g_4))} \&\& {\phi(g_1)(\phi(g_2)\phi(g_3g_4))}
	\arrow["{\alpha(\phi(g_1)\phi(g_2),\phi(g_3),\phi(g_4))}"{name=D}, from=1-4, to=2-4]
	\arrow["{\phi(\alpha(g_1,g_2g_3,g_4))}"', from=2-1, to=3-1]
	\arrow["{\alpha(\phi(g_1),\phi(g_2),\phi(g_3)\phi(g_4))}"{name=E}, from=2-4, to=4-4,pos=0.8]
	\arrow["{\phi((g_1g_2)g_3,g_4)}", from=1-1, to=1-2]
	\arrow["{\phi(\alpha(g_1,g_2,g_3)g_4)}"', from=1-1, to=2-1]
	\arrow["{\phi(g_1g_2,g_3)\phi(g_4)}", from=1-2, to=1-3]
	\arrow["{(\phi(g_1,g_2)\phi(g_3))\phi(g_4)}", from=1-3, to=1-4]
	\arrow["{\phi(g_1)\phi(\alpha(g_2,g_3,g_4))}"', from=5-1, to=5-2]
	\arrow["{\phi(g_1)\phi(g_2,g_3g_4)}"', from=5-2, to=5-4]
	\arrow["{\phi(g_1)(\phi(g_2)\phi(g_3,g_4))}"', from=5-4, to=4-4]
	\arrow["{\phi(g_1,(g_2g_3)g_4)}"', from=3-1, to=5-1]
	\arrow[from=3-1, to=3-2]
	\arrow[from=1-1, to=2-2]
	\arrow[from=2-2, to=3-2, "{}"{name=A}]
	\arrow[from=1-3, to=2-3,"{}"{name=C}]
	\arrow[from=2-2, to=3-3]
	\arrow[from=3-3, to=2-3]
	\arrow[from=2-3, to=2-4,"{}"{name=G},pos=1]
	\arrow[from=3-3, to=4-3]
	\arrow[from=4-3, to=5-4]
	\arrow[from=4-3, to=2-4, "{}"{name=F},pos=0.65] 
	\arrow[from=3-2, to=5-2, "{}"{name=B}]
        \arrow[Rightarrow, from =3-1 , to = A, "{\tilde{\phi}(\Sigma(g_1,g_2,g_3,g_4))}"]
        \arrow[Rightarrow, from =5-1 , to = B, "{\phi(g_1,\alpha(g_2,g_3,g_4))^{-1}}",pos=0.7]
        \arrow[Rightarrow, from =2-2 , to = C, "{\phi(g_1g_2,g_3,g_4)}"]
        \arrow[Rightarrow, from =5-2 , to = 4-3, "{\phi(g_1,g_2,g_3g_4)}"]
        \arrow[Rightarrow, from =2-3 , to = D, "{\alpha(\phi(g_1,g_2),\phi(g_3),\phi(g_4))^{-1}}",pos=0.65]
        \arrow[Rightarrow, from =4-3 , to = E, "{\alpha(\phi(g_1),\phi(g_2),\phi(g_3,g_4))^{-1}}",pos=1,shorten=3ex]
        \arrow[Rightarrow, from =F , to = G, "{I(\phi(g_1,g_2),\phi(g_3,g_4))}"]
        \end{tikzcd} = \\[8ex]
        \begin{tikzcd}[ampersand replacement=\&, scale cd=0.85]
	{\phi(((g_1g_2)g_3)g_4)} \& {\phi((g_1g_2)g_3)\phi(g_4)} \& {(\phi(g_1g_2)\phi(g_3))\phi(g_4)} \\
	\& {\phi(g_1(g_2g_3))\phi(g_4)} \& {((\phi(g_1)\phi(g_2))\phi(g_3))\phi(g_4)} \& {(\phi(g_1)\phi(g_2))(\phi(g_3)\phi(g_4))} \\
	\&\& {(\phi(g_1)(\phi(g_2)\phi(g_3)))\phi(g_4)} \\
	{\phi((g_1(g_2g_3))g_4)} \& {(\phi(g_1)\phi(g_2g_3))\phi(g_4)} \& {\phi(g_1)((\phi(g_2)\phi(g_3))\phi(g_4))} \& {\phi(g_1)(\phi(g_2)(\phi(g_3)\phi(g_4)))} \\
	\&\& {\phi(g_1)(\phi(g_2g_3)\phi(g_4))} \& {\phi(g_1)(\phi(g_2)\phi(g_3g_4))} \\
	\& {\phi(g_1((g_2g_3)g_4))} \& {\phi(g_1)\phi((g_2g_3)g_4)} \& {\phi(g_1)\phi(g_2(g_3g_4))}
	\arrow["{\alpha(\phi(g_1)\phi(g_2),\phi(g_3),\phi(g_4))}", from=2-3, to=2-4]
	\arrow[from=2-3, to=3-3,"{}"{name=B}]
	\arrow["{\phi(\alpha(g_1,g_2g_3,g_4))}"{name=D}, from=4-1, to=6-2,swap]
	\arrow[from=3-3, to=4-3]
	\arrow[from=4-3, to=4-4]
	\arrow["{\alpha(\phi(g_1),\phi(g_2),\phi(g_3)\phi(g_4))}"{name=C}, from=2-4, to=4-4]
	\arrow["{\phi((g_1g_2)g_3,g_4)}", from=1-1, to=1-2]
	\arrow["{\phi(\alpha(g_1,g_2,g_3)g_4)}"{name=P}, from=1-1, to=4-1,swap]
	\arrow[from=1-2, to=2-2,"{}"{name=Q}]
	\arrow[from=4-1, to=2-2]
	\arrow[from=2-2, to=4-2,"{}"{name=A}]
	\arrow[from=4-2, to=3-3,"{}"{name=R}]
	\arrow["{\phi(g_1g_2,g_3)\phi(g_4)}", from=1-2, to=1-3]
	\arrow["{(\phi(g_1,g_2)\phi(g_3))\phi(g_4)}", from=1-3, to=2-3]
	\arrow[from=4-2, to=5-3,"{}"{name=E}]
	\arrow[from=5-3, to=4-3]
	\arrow["{\phi(g_1)\phi(\alpha(g_2,g_3,g_4))}"', from=6-3, to=6-4]
	\arrow["{\phi(g_1)\phi(g_2,g_3g_4)}"', from=6-4, to=5-4]
	\arrow["{\phi(g_1)(\phi(g_2)\phi(g_3,g_4))}"', from=5-4, to=4-4]
	\arrow[from=6-3, to=5-3]
	\arrow["{\phi(g_1,(g_2g_3)g_4)}"', from=6-2, to=6-3]
        \arrow[Rightarrow,"{\Sigma(\phi(g_1),\phi(g_2),\phi(g_3),\phi(g_4))}", from=4-3, to=C,shorten=4ex,pos=0.45]
        \arrow[Rightarrow,"{\tilde{\phi}(g_1,g_2,g_3)\phi(g_4)}", from=A, to=B]
        \arrow[Rightarrow,"{\phi(g_1,g_2g_3,g_4)}", from=D, to=E]
        \arrow[Rightarrow,"{\phi(g_1)\tilde{\phi}(g_2,g_3,g_4)}"', from=6-4, to=5-3]
        \arrow[Rightarrow,"{\phi(\alpha(g_1,g_2,g_3),g_4)}", from=P, to=Q,shorten=3ex,pos=0.7]
        \arrow[Rightarrow,"{\alpha(\phi(g_1),\phi(g_2,g_3),\phi(g_4))^{-1}}"{font=\tiny,pos=0.9}, from=E, to=R,pos=0.9,swap]
\end{tikzcd}
        \end{split}
        \end{align}
    where we are using the notation
    \begin{align}
        \begin{split}
            \tilde{\phi}(g_1,g_2,g_3)\phi(g_4) &:= R_{\phi(g_4)}(\phi(g_1g_2,g_3),\phi(g_1,g_2)\phi(g_3),\alpha(\phi(g_1),\phi(g_2),\phi(g_3))) \circ \phi(g_1,g_2,g_3)\phi(g_4) \\
            &\circ R_{\phi(g_4)}(\phi(\alpha(g_1,g_2,g_3)),\phi(g_1,g_2g_3),\phi(g_1)\phi(g_2g_3))^{-1},\\ 
            \phi(g_1)\tilde{\phi}(g_2,g_3,g_4) &:= L_{\phi(g_1)}(\phi(g_2g_3,g_4),\phi(g_2,g_3)\phi(g_4),\alpha(\phi(g_2),\phi(g_3),\phi(g_4))) \circ \phi(g_1)\phi(g_2,g_3,g_4) \\
            &\circ L_{\phi(g_1)}(\phi(\alpha(g_2,g_3,g_4)),\phi(g_2,g_3g_4),\phi(g_2)\phi(g_3g_4))^{-1},\\
            \tilde{\phi}(\Sigma(g_1,g_2,g_3,g_4)) &:= \phi(\alpha(g_1g_2,g_3,g_4),\alpha(g_1,g_2,g_3g_4)) \circ \phi(\Sigma(g_1,g_2,g_3,g_4)) \\
            &\circ \phi(\alpha(g_1,g_2,g_3)g_4,\alpha(g_1,g_2g_3,g_4),g_1\alpha(g_2,g_3,g_4))^{-1}.
        \end{split}
    \end{align}

\pagebreak[4]
        
\section{Constructions and results}\label{sec:results}

In this section we present the constructions of the Lie 3-groups of interest and the proof of \cref{th:mainintro}. For this, we need to introduce the following notation.

\begin{notation}\label{not:low}
    We identify elements in $\Lambda^3 (\mathbb Z^n)^{\ast}$ and $\Lambda^2 (\mathbb Z^n)^{\ast}$ with skew-symmetric trilinear $H: \mathbb Z^n \times \mathbb Z^n \times \mathbb Z^n \rightarrow \mathbb Z$ and bilinear $B: \mathbb Z^n \times \mathbb Z^n \rightarrow \mathbb Z$ maps, respectively. These extend by linearity to give trilinear and bilinear maps on $\mathbb R^n$ which we denote by the same name. For $H \in \Lambda^3 (\mathbb Z^n)^{\ast}$, $B \in \Lambda^2 (\mathbb Z^n)^{\ast}$, we write 
    \begin{equation}
    H^{low}: \mathbb Z^n \times \mathbb Z^n \times \mathbb Z^n \rightarrow \mathbb Z, \quad B^{low}: \mathbb Z^n \times \mathbb Z^n \rightarrow \mathbb Z
    \end{equation}
    for the trilinear and bilinear maps defined on the standard basis $\{e_1,...,e_n\}$ of $\mathbb Z^n$ by
    \begin{equation}
        H^{low}(e_i,e_j,e_k) := \begin{cases}
        H(e_i,e_j,e_k)      & \quad \text{if } \quad i<j<k  \\
        0 & \quad \text{otherwise }
        \end{cases}, \quad  
        B^{low}(e_i,e_j) := \begin{cases}
        B(e_i,e_j)      & \quad \text{if } \quad i<j  \\
        0 & \quad \text{otherwise }.
        \end{cases} 
    \end{equation}
    These satisfy the following relations:
    \begin{align}
        H(k_1,k_2,k_3) &= \sum_{\sigma \in S_3}\operatorname{sgn}(\sigma)H^{low}(\sigma(k_1),\sigma(k_2),\sigma(k_3)),\\
        B(k_1,k_2) &= B^{low}(k_1,k_2) - B^{low}(k_2,k_1),\\
        (\iota_{m}H)^{low}(k_1,k_2) &= H^{low}(m,k_1,k_2)-H^{low}(k_1,m,k_2)+H^{low}(k_1,k_2,m),
    \end{align}
    where for $m \in \mathbb Z^n$ and $H \in \Lambda^3 (\mathbb Z^n)^{\ast}$ we write $\iota_mH \in \Lambda^2 (\mathbb Z^n)^{\ast}$ for the contraction of $H$ with $m$ in the first component.
    \end{notation}

    \subsection{The Lie 3-groups $T_2 \mathbb D_n^{F_1}$ and $T_2 \mathbb D_n^{F_2}$}\label{sec:t2dnf1}
    
    We define the Lie 3-group $T_2\mathbb D_n^{F_1}$ as follows. Its underlying Lie 2-groupoid has 2-cells of the form
    \begin{equation}
        \begin{tikzcd}[ampersand replacement = \& ]
                (v,B,H_k) \ar[r,bend left = 40, "{(k,B_k,A)}"{name=F}] \ar[r,bend right = 40, "{(k,B_k,A+A_k)}"{name=G},swap] \ar[Rightarrow,from=F,to=G,"{(A_k,z)}",swap] \& (v+k,B+B_k,H_k)
            \end{tikzcd},
    \end{equation}
    where $(v,B,H_k) \in \mathbb R^n \times \Lambda^2 (\mathbb R^n)^{\ast} \times  \Lambda^3 (\mathbb Z^n)^{\ast}$, $(k,B_k,A) \in \mathbb Z^n \times \Lambda^2 (\mathbb Z^n)^{\ast} \times (\mathbb R^n)^{\ast}$ and $(A_k,z) \in (\mathbb Z^n)^{\ast} \times U(1)$. These are composed as follows:
    \begin{align*}
        &\begin{tikzcd}[ampersand replacement = \& ]
                (v,B,H_k) \ar[r,bend left = 40, "{(k,B_k,A)}"{name=F}] \ar[r,bend right = 40, "{(k,B_k,A+A_k)}"{name=G},swap] \ar[Rightarrow,from=F,to=G,"{(A_k,z)}",swap,pos=0.2] \& (v+k,B+B_k,H_k) \ar[r,bend left = 40, "{(k',B_k',A')}"{name=F2}] \ar[r,bend right = 40, "{(k',B_k',A'+A_k')}"{name=G2},swap] \ar[Rightarrow,from=F2,to=G2,"{(A_k',z')}",swap,pos=0.2] \& (v+k+k',B+B_k+B_k',H_k)
            \end{tikzcd} \\
            =&\begin{tikzcd}[ampersand replacement = \&,column sep=15ex ]
                (v,B,H_k) \ar[r,bend left = 40, "{(k+k',B_k+B_k',A+A')}"{name=F}] \ar[r,bend right = 40, "{(k+k',B_k+B_k',A+A'+A_k+A_k')}"{name=G},swap] \ar[Rightarrow,from=F,to=G,"{(A_k+A_k',zz')}",swap,pos=0.35] \& (v+k+k',B+B_k+B_k',H_k),
            \end{tikzcd}  
    \end{align*}
    \begin{equation}
        \begin{tikzcd}[ampersand replacement = \& ,column sep = 12ex]
                (v,B,H_k) \ar[r,bend left = 70, "{(k,B_k,A)}"{name=F}] \ar[r,bend left = 10, "{(k,B_k,A+A_k)}"{name=G},swap,pos=0.05]
                \ar[r,bend right = 70,"{(k,B_k,A+A_k+A_k')}"{name=H},swap]
                \ar[Rightarrow,from=F,to=G,"{(A_k,z)}",swap,pos=0.5,shorten = 1ex]
                \ar[Rightarrow,from=G,to=H,"{(A_k',z')}",swap,pos=0.4]
                \& (v+k,B+B_k,H_k) 
            \end{tikzcd} = 
        \begin{tikzcd}[ampersand replacement = \& ,column sep = 12ex]
                (v,B,H_k) \ar[r,bend left = 70, "{(k,B_k,A)}"{name=F}]
                \ar[r,bend right = 70,"{(k,B_k,A+A_k+A_k')}"{name=H},swap]
                \ar[Rightarrow,from=F,to=H,"{(A_k+A_k',zz')}",swap,pos=0.3]
                \& (v+k,B+B_k,H_k) 
            \end{tikzcd}.
    \end{equation}
    The multiplication functors are 
    \begin{align}
        \begin{split}
            L_{(v_1,B_1,H_{k_1})}: T_2\mathbb D_n^{F_1} &\rightarrow T_2\mathbb D_n^{F_1} \\
            \begin{tikzcd}[ampersand replacement = \&,column sep=2.7ex ]
                {\scriptstyle (v_2,B_2,H_{k_2})} \ar[r,bend left = 60, "{(k_2,B_{k_2},A_2)}"{name=F}] \ar[r,bend right = 40, "{(k_2,B_{k_2},A_2+A_{k_2})}"{name=G},swap] \ar[Rightarrow,from=F,to=G,"{(A_{k_2},z_2)}"{swap,pos=0.3,font=\tiny}] \& {\scriptstyle (v_2+k_2,B_2+B_{k_2},H_{k_2}) }
            \end{tikzcd} &\mapsto
            \begin{tikzcd}[ampersand replacement = \&,column sep = -0.5ex ]
                {\scriptstyle (v_1+v_2,B_1+B_2+\iota_{v_2}H_{k_1},H_{k_1}+H_{k_2}) }\ar[r,bend left = 40, "{(k_2,B_{k_2}+\iota_{k_2}H_{k_1},A_2+\delta(H_{k_1},v_2,k_2))}"{name=F},pos=0.48] \ar[r,bend right = 40, "{(k_2,B_{k_2}+\iota_{k_2}H_{k_1},A_2+\delta(H_{k_1},v_2,k_2)+A_{k_2})}"{name=G},swap,pos=0.48] \ar[Rightarrow,from=F,to=G,"{(A_{k_2},z_2)}",swap,pos=0.3] \& {\scriptstyle (v_1+v_2+k_2,B_1+B_2+\iota_{v_2+k_2}H_{k_1}+B_{k_2},H_{k_1}+H_{k_2}) }
            \end{tikzcd} 
        \end{split},
    \end{align}
    where
    \begin{equation}
        \delta(H_{k_1},v_2,k_2) := -H_{k_1}^{low}(\cdot,k_2,v_2) + H_{k_1}^{low}(k_2,\cdot,v_2) + H_{k_1}^{low}(v_2,k_2,\cdot),
    \end{equation}
    \begin{align}
        \begin{split}
            R_{(v_2,B_2,H_{k_2})}: T_2\mathbb D_n^{F_1} &\rightarrow T_2\mathbb D_n^{F_1} \\
            \begin{tikzcd}[ampersand replacement = \& ]
                {\scriptstyle (v_1,B_1,H_{k_1})} \ar[r,bend left = 60, "{(k_1,B_{k_1},A_1)}"{name=F}] \ar[r,bend right = 40, "{(k_1,B_{k_1},A_1+A_{k_1})}"{name=G},swap] \ar[Rightarrow,from=F,to=G,"{(A_{k_1},z_1)}"{swap,pos=0.3,font=\tiny}] \& {\scriptstyle (v_1+k_1,B_1+B_{k_1},H_{k_1})}
            \end{tikzcd} &\mapsto
            \begin{tikzcd}[ampersand replacement = \&,column sep = -0.5ex ]
                {\scriptstyle (v_1+v_2,B_1+B_2+\iota_{v_2}H_{k_1},H_{k_1}+H_{k_2}) }\ar[r,bend left = 40, "{(k_1,B_{k_1},A_1-\iota_{v_2}B_{k_1})}"{name=F},pos=0.48] \ar[r,bend right = 40, "{(k_1,B_{k_1},A_1-\iota_{v_2}B_{k_1}+A_{k_1})}"{name=G},swap,pos=0.48] \ar[phantom,from=F,to=G,"{(A_{k_1},z_1exp(-\iota_{v_2}A_{k_1}))}"{swap,pos=0.3,font=\footnotesize}] \& {\scriptstyle (v_1+k_1+v_2,B_1+B_{k_1}+B_2 + \iota_{v_2}H_{k_1},H_{k_1}+H_{k_2}) }
            \end{tikzcd}
        \end{split}.
    \end{align}
    In the last diagram, the double arrow has been omitted to avoid overloading the diagram. This is also done in other diagrams throughout the text, assuming that double arrows always go from top to bottom. We equip $R_{(v_2,B_2,H_{k_2})}$ with the trivial compositor, while the compositor for $L_{(v_1,B_1,H_{k_1})}$ acting on 
    $$(v_2,B_2,H_{k_2}) \stackrel{(k_2,B_{k_2},A_2)}{\rightarrow} (v_2+k_2,B_2+B_{k_2},H_{k_2}) \stackrel{(k_2',B_{k_2}',A_2')}{\rightarrow} (v_2+k_2+k_2',B_2+B_{k_2}+B_{k_2}',H_{k_2})$$
    is
    \begin{equation}
    \begin{tikzcd}[column sep = 10ex]
            {\scriptstyle (v_1+v_2,B_1+B_2+\iota_{v_2}H_{k_1},H_{k_1}+H_{k_2})} \ar[rr,bend left=60,"{(k_2+k_2',B_{k_2}+B_{k_2}'+\iota_{k_2+k_2'}H_{k_1},A_2+A_2' + \delta(H_1,v_2,k_2+k_2'))}"{name=F},pos=0.53] 
              \ar[rr,bend right=10,"{(k_2+k_2',B_{k_2}+\iota_{k_2}H_{k_1}+B_{k_2}'+\iota_{k_2'}H_{k_1},A_2+A_2'+\delta(H_1,v_2,k_2) + \delta(H_1,v_2+k_2,k_2'))}"{name=G},swap,pos=0.52]
             & & {\scriptstyle (v_1+v_2+k_2+k_2',B_1+B_2+\iota_{v_2+k_2+k_2'}H_{k_1}+B_{k_2}+B_{k_2}',H_{k_1}+H_{k_2})} \\ \ar[phantom,from=F,to=G,"{L_{(v_1,B_1,H_{k_1})}(v_2,k_2,B_{k_2},A_2,k_2',B_{k_2}',A_2')}",shorten >=1.5pt] &
        \end{tikzcd},
    \end{equation}
    where
    \begin{align}
        \begin{split}
            L_{(v_1,B_1,H_{k_1})}(v_2,&k_2,B_{k_2},A_2,k_2',B_{k_2}',A_2') \\
        &= \left(- H_{k_1}^{low}(\cdot,k_2',k_2) + H_{k_1}^{low}(k_2',\cdot,k_2) + H_{k_1}^{low}(k_2,k_2',\cdot), exp(H_{k_1}^{low}(v_2,k_2',k_2)) \right).
        \end{split}
    \end{align}
    The interchange 2-cells are defined for $(v_i,B_i,H_{k_i}) \stackrel{(k_i,B_{k_i},A_i)}{\rightarrow} (v_i+k_i,B_i+B_{k_i},H_{k_i})$, $i=1, \,2$ by
    \begin{equation}
            \begin{tikzcd}[column sep=15ex,row sep = 10ex]
             {\scriptstyle (v_1+v_2,B_1+B_2+\iota_{v_2}H_{k_1},H_{k_1}+H_{k_2})} \ar[r,"{(k_1,B_{k_1},A_1-\iota_{v_2}B_{k_1})}",{name=U}] \ar[d,"{\substack{(k_2,B_{k_2}+\iota_{k_2}H_{k_1},\\ A_2-\delta(H_{k_1},v_2,k_2))}}",swap] 
             & {\scriptstyle (v_1+k_1+v_2,B_1+B_{k_1}+B_2+\iota_{v_2}H_{k_1},H_{k_1}+H_{k_2}) } \ar[d,"{\substack{(k_2,B_{k_2}+\iota_{k_2}H_{k_1},\\ A_2-\delta(H_{k_1},v_2,k_2))}}"] \ar[Rightarrow, dl,"{(-\iota_{k_2}B_{k_1},exp(-B_{k_1}^{low}(v_2,k_2)))}",swap,pos=0.5]\\
             {\scriptstyle (v_1+v_2+k_2,B_1+B_2+\iota_{v_2+k_2}H_{k_1}+B_{k_2},H_{k_1}+H_{k_2}) }\ar[r,"{(k_1,B_{k_1},A_1-\iota_{v_2+k_2}B_{k_1})}",{name=D},swap] 
             & {\scriptstyle (v_1+k_1+v_2+k_2,B_1+B_{k_1}+B_2+\iota_{v_2+k_2}H_{k_1}+B_{k_2},H_{k_1}+H_{k_2})}.
            \end{tikzcd}
            \end{equation}
    For $(v_i,B_i,H_{k_i})$, $i=1, \,2,\,3$, the associator arrows are
    \begin{equation}
    \begin{tikzcd}
        (v_1+v_2+v_3,B_1+B_2+\iota_{v_2}H_{k_1}+B_3 + \iota_{v_3}(H_{k_1}+H_{k_2}),H_{k_1}+H_{k_2}+H_{k_3}) \ar[d,"{(0,0,-H_{k_1}^{low}(\cdot,v_3,v_2) + H_{k_1}^{low}(v_3,\cdot,v_2) + H_{k_1}^{low}(v_2,v_3,\cdot))}"] \\
        (v_1+v_2+v_3,B_1+B_2+B_3+\iota_{v_3}H_{k_2} + \iota_{v_2+v_3}H_{k_1},H_{k_1}+H_{k_2}+H_{k_3}),
    \end{tikzcd}
    \end{equation}
    and the associator 2-cells are defined on $(v_i,B_i,H_{k_i}) \stackrel{(k_i,B_{k_i},A_i)}{\rightarrow} (v_i+k_i,B_i+B_{k_i},H_{k_i})$, $i=1, \,2,\, 3$ by
    \begin{equation}
            \begin{tikzcd}[column sep = 3ex]
            {\substack{(v_1+v_2+v_3,\\ B_1+B_2+B_3+\iota_{v_2+v_3}H_{k_1}+\iota_{v_3}H_{k_2},H_{k_1}+H_{k_2}+H_{k_3})}} \ar[rr,bend left=40,"{(k_1,B_{k_1},A_1-\iota_{v_2}B_{k_1}-\iota_{v_3}B_{k_1} - H_{k_1}^{low}(\cdot,v_3,v_2) + H_{k_1}^{low}(v_3,\cdot,v_2) + H_{k_1}^{low}(v_2,v_3,\cdot))}"{name=F}] 
              \ar[rr,bend right=10,"{(k_1,B_{k_1}, - H_{k_1}^{low}(\cdot,v_2,v_3) + H_{k_1}^{low}(v_3,\cdot,v_2) + H_{k_1}^{low}(v_2,v_3,\cdot) + A_1-\iota_{v_2+v_3}B_{k_1})}"{name=G},swap]
             & & {\substack{(v_1+k_1+v_2+v_3,\\ B_1+B_{k_1}+B_2+B_3+\iota_{v_2+v_3}H_{k_1}+\iota_{v_3}H_{k_2},H_{k_1}+H_{k_2}+H_{k_3})}}\\ \ar[Rightarrow,from=F,to=G,"{(0,exp(-B_{k_1}^{low}(v_2,v_3)))}"{swap,pos=0.5},shorten >=1.5pt] &
            \end{tikzcd},
    \end{equation}

    \begin{equation}
            \begin{tikzcd}[column sep = 1ex]
            {\scriptstyle (v_1+v_2+v_3,B_1+B_2+B_3+\iota_{v_2+v_3}H_{k_1}+\iota_{v_3}H_{k_2},H_{k_1}+H_{k_2}+H_{k_3})} \ar[rr,bend left=70,"{(k_2,B_{k_2}+\iota_{k_2}H_{k_1},A_2 + \delta(H_{k_1},v_2,k_2)-\iota_{v_3}(B_{k_2}+\iota_{k_2}H_{k_1}) - H_{k_1}^{low}(\cdot,v_3,v_2+k_2) + H_{k_1}^{low}(v_3,\cdot,v_2+k_2) + H_{k_1}^{low}(v_2+k_2,v_3,\cdot) )}"{name=F},pos=0.5] 
              \ar[rr,bend right=10,"{(k_2,B_{k_2}+\iota_{k_2}H_1, - H_{k_1}^{low}(\cdot,v_2,v_3) + H_{k_1}^{low}(v_3,\cdot,v_2) + H_{k_1}^{low}(v_2,v_3,\cdot) + A_2-\iota_{v_3}B_{k_2} + \delta(H_{k_1},v_2+v_3,k_2))}"{name=G},swap]
             & & {\scriptstyle (v_1+v_2+v_3,B_1+B_2+B_3+\iota_{v_2+v_3}H_{k_1}+\iota_{v_3}H_{k_2},H_{k_1}+H_{k_2}+H_{k_3})} \\ \ar[phantom,from=F,to=G,"{(0,exp(-H_1^{low}(v_2+k_2,v_3,k_2) +H_1^{low}(v_2,k_2,v_3) ))}"{swap,pos=0.5,font=\footnotesize},shorten >=1.5pt] &
            \end{tikzcd},
    \end{equation}
        
    \begin{equation}
            \begin{tikzcd}[column sep = 1.5ex]
            {\scriptstyle (v_1+v_2+v_3,B_1+B_2+B_3+\iota_{v_2+v_3}H_{k_1}+\iota_{v_3}H_{k_2},H_{k_1}+H_{k_2}+H_{k_3})} \ar[rr,bend left=70,"{(k_3,B_{k_3}+\iota_{k_3}(H_{k_1}+H_{k_2}),A_3 + \delta(H_{k_1}+H_{k_2},v_3,k_3) - H_{k_1}^{low}(\cdot,v_3+k_3,v_2) + H_{k_1}^{low}(v_3+k_3,\cdot,v_2) + H_{k_1}^{low}(v_2,v_3+k_3,\cdot) )}"{name=F},pos=0.5] 
              \ar[rr,bend right=10,"{(k_3,B_{k_3}+\iota_{k_3}H_{k_2}+\iota_{k_3}H_{k_1}, - H_{k_1}^{low}(\cdot,v_2,v_3) + H_{k_1}^{low}(v_3,\cdot,v_2) + H_{k_1}^{low}(v_2,v_3,\cdot) + A_3+ \delta(H_{k_2},v_3,k_3)  +\delta(H_{k_1},v_2+v_3,k_3)}"{name=G},swap]
             & & {\scriptstyle (v_1+v_2+v_3,B_1+B_2+B_3+\iota_{v_2+v_3}H_{k_1}+\iota_{v_3}H_{k_2},H_{k_1}+H_{k_2}+H_{k_3})} \\ \ar[Rightarrow,from=F,to=G,"{(0,exp(-H_1^{low}(v_2,v_3+k_3,k_3)))}"{swap,pos=0.5,font=\footnotesize},shorten = 3ex] &
            \end{tikzcd}.
    \end{equation}
Finally, the pentagonator of $(v_i,B_i,H_{k_i})$, $i= 1, \,...,\,4$ is
    \begin{equation}
            \begin{tikzcd}[column sep = 5ex]
            {\substack{(v_1+v_2+v_3+v_4, \\
            B_1+B_2+B_3+B_4+\iota_{v_2+v_3+v_4}H_{k_1}+\iota_{v_3+v_4}H_{k_2}+\iota_{v_4}H_{k_3}, \\
            H_{k_1}+H_{k_2}+H_{k_3}+H_{k_4})}} \ar[rr,bend left=60,"{\substack{(0,0,- H_{k_1}(\cdot,v_3,v_2) + H_{k_1}(v_3,\cdot,v_2) + H_{k_1}(v_2,v_3,\cdot)  - H_{k_1}(\cdot,v_4,v_2+v_3) + H_{k_1}(v_4,\cdot,v_2+v_3) + H_{k_1}(v_2+v_3,v_4,\cdot)  \\ - H_{k_2}(\cdot,v_4,v_3) + H_{k_2}(v_4,\cdot,v_3) + H_{k_2}(v_3,v_4,\cdot) )}}"{name=F}] 
              \ar[rr,bend right=10,"{(0,0,- (H_{k_1}+H_{k_2})(\cdot,v_4,v_3) + (H_{k_1}+H_{k_2}(v_4,\cdot,v_3) + (H_{k_1}+H_{k_2})(v_3,v_4,\cdot) - H_{k_1}(\cdot,v_3+v_4,v_2) + H_{k_1}(v_3+v_4,\cdot,v_2) + H_{k_1}(v_2,v_3+v_4,\cdot) )}"{name=G},swap]
             & & {\substack{(v_1+v_2+v_3+v_4, \\
             B_1+B_2+B_3+B_4+\iota_{v_2+v_3+v_4}H_{k_1}+\iota_{v_3+v_4}H_{k_2}+\iota_{v_4}H_{k_3}, \\
             H_{k_1}+H_{k_2}+H_{k_3}+H_{k_4})}} \\ \ar[Rightarrow,from=F,to=G,"{(0,exp(-H_{k_1}^{low}(v_2,v_3,v_4)))}"{swap,pos=0.4},shorten >=1.5pt] &
            \end{tikzcd}.
            \end{equation}

    Checking that all axioms are satisfied amounts to tedious but straightforward computations that we include in \cref{sec:welldefined}. The Lie 3-group $T_2 \mathbb D_n^{F_2}$ is defined to be the Lie sub-3-group of $T_2 \mathbb B_n^{F_1}$ obtained by setting $H_k = 0$.

    \subsection{The Lie 3-groups $T_2 \mathbb B_n^{F_1}$ and $T_2 \mathbb B_n^{F_2}$}\label{sec:t2bnf1}
    
    We define the Lie 3-group $T_2\mathbb B_n^{F_1}$ as follows. Its underlying Lie 2-groupoid is 
    \begin{equation}
    T_2\mathbb B_n^{F_1} := \mathbb R^n /\!/ \mathbb Z^n \times \Lambda^3 (\mathbb Z^n)^{\ast} \times B \Lambda^2 (\mathbb Z^n)^{\ast} \times B^2C^{\infty}(\mathbb R^n/\mathbb Z^n,U(1)),
    \end{equation}
    where $\mathbb R^n /\!/ \mathbb Z^n$ denotes the quotient groupoid for the standard action of $\mathbb Z^n$ on $\mathbb R^n$. The multiplication functors are
    \begin{align}
        \begin{split}
            L_{(v_1,H_{k_1})}: T_2\mathbb B_n^{F_1} &\rightarrow T_2\mathbb B_n^{F_1} \\
            \begin{tikzcd}[ampersand replacement = \& ]
                (v_2,H_{k_2}) \ar[r,bend left = 40, "{(k_2,B_{k_2})}"{name=F}] \ar[r,bend right = 40, "{(k_2,B_{k_2})}"{name=G},swap] \ar[Rightarrow,from=F,to=G,"{f_2}",swap] \& (v_2+k_2,H_{k_2})
            \end{tikzcd} &\mapsto
            \begin{tikzcd}[ampersand replacement = \& ]
                (v_1+v_2,H_{k_1}+H_{k_2}) \ar[r,bend left = 40, "{(k_2,B_{k_2}+\iota_{k_2}H_{k_1})}"{name=F}] \ar[r,bend right = 40, "{(k_2,B_{k_2}+\iota_{k_2}H_{k_1})}"{name=G},swap] \ar[Rightarrow,from=F,to=G,"{f_2}",swap] \& (v_1+v_2+k_2,H_{k_1}+H_{k_2})
            \end{tikzcd} 
        \end{split},
    \end{align}
    \begin{align}
        \begin{split}
            R_{(v_2,H_{k_2})}: T_2\mathbb B_n^{F_1} &\rightarrow T_2\mathbb B_n^{F_1} \\
            \begin{tikzcd}[ampersand replacement = \& ]
                (v_1,H_{k_1}) \ar[r,bend left = 40, "{(k_1,B_{k_1})}"{name=F}] \ar[r,bend right = 40, "{(k_1,B_{k_1})}"{name=G},swap] \ar[Rightarrow,from=F,to=G,"{f_1}",swap] \& (v_1+k_1,H_{k_1})
            \end{tikzcd} &\mapsto
            \begin{tikzcd}[ampersand replacement = \& ]
                (v_1+v_2,H_{k_1}+H_{k_2}) \ar[r,bend left = 40, "{(k_1,B_{k_1})}"{name=F}] \ar[r,bend right = 40, "{(k_1,B_{k_1})}"{name=G},swap] \ar[Rightarrow,from=F,to=G,"{v_2^{\ast}f_1}",swap] \& (v_1+k_1+v_2,H_{k_1}+H_{k_2})
            \end{tikzcd} 
        \end{split},
    \end{align}

    where for $v_2 \in \mathbb R^n$ and $f_1 \in C^{\infty}(\mathbb R^n/\mathbb Z^n,U(1))$ we write $v_2^{\ast}f_1 \in C^{\infty}(\mathbb R^n/\mathbb Z^n,U(1))$ for $v_2^{\ast}f_1(\cdot) := f_1(\cdot - v_2)$. We equip $R_{(v_2,H_{k_2})}$ with a trivial compositor, while the compositor for $L_{(v_1,H_{k_1})}$ acting on 
    \begin{equation}
    (v_2,H_{k_2}) \stackrel{(k_2,B_{k_2})}{\rightarrow} (v_2+k_2,H_{k_2}) \stackrel{(k_2',B_{k_2}')}{\rightarrow} (v_2+k_2+k_2',H_{k_2})
    \end{equation}
    is
    \begin{equation}
    \begin{tikzcd}[column sep = 11ex]
            (v_1+v_2,H_{k_1}+H_{k_2}) \ar[rr,bend left=60,"{(k_2+k_2',B_{k_2}+B_{k_2}'+\iota_{k_2+k_2'}H_{k_1})}"{name=F},pos=0.53] 
              \ar[rr,bend right=10,"{(k_2+k_2',B_{k_2}+\iota_{k_2}H_{k_1}+B_{k_2}'+\iota_{k_2'}H_{k_1}))}"{name=G},swap]
             & & (v_1+v_2+k_2+k_2',H_{k_1}+H_{k_2}) \\ \ar[Rightarrow,from=F,to=G,"{L_{(v_1,H_{k_1})}(v_2,k_2,B_{k_2},k_2',B_{k_2}')}"{swap,pos=0.7},shorten >=1.5pt] &
        \end{tikzcd},
    \end{equation}
    where
    \begin{equation}
        L_{(v_1,H_{k_1})}(v_2,k_2,B_{k_2},k_2',B_{k_2}') = exp\left(H_{k_1}^{low}(v_2,k_2',k_2) - H_{k_1}^{low}(\cdot,k_2',k_2) + H_{k_1}^{low}(k_2',\cdot,k_2) + H_{k_1}^{low}(k_2,k_2',\cdot)\right).
    \end{equation}
    The interchange 2-cells are defined for $(v_i,H_{k_i}) \stackrel{(k_i,B_{k_i})}{\rightarrow} (v_i+k_i,H_{k_i})$, $i=1, \,2$ by
    \begin{equation}
            \begin{tikzcd}[column sep=14ex]
             (v_1+v_2,H_{k_1}+H_{k_2}) \ar[r,"{(k_1,B_{k_1})}",{name=U}] \ar[d,"{(k_2,B_{k_2}+\iota_{k_2}H_{k_1})}",swap] 
             & (v_1+k_1+v_2,H_{k_1}+H_{k_2}) \ar[d,"{(k_2,B_{k_2}+\iota_{k_2}H_{k_1})}"] \ar[Rightarrow, dl,"{exp(-\iota_{k_2}B_{k_1}-B_{k_1}^{low}(v_2,k_2))}",sloped,pos=0.65]\\
             (v_1+v_2+k_2,H_{k_1}+H_{k_2}) \ar[r,"{(k_1,B_{k_1})}",{name=D},swap] 
             & (v_1+v_2+k_1+k_2,H_{k_1}+H_{k_2}).
            \end{tikzcd}
            \end{equation}
    The associator arrows are all identities, while the associator 2-cells are defined on $(v_i,H_{k_i}) \stackrel{(k_i,B_{k_i})}{\rightarrow} (v_i+k_i,H_{k_i})$, $i=1, \,2,\, 3$ by
    \begin{equation}
            \begin{tikzcd}[column sep = 6ex]
            (v_1+v_2+v_3,H_{k_1}+H_{k_2}+H_{k_3}) \ar[rr,bend left=40,"{(k_1,B_{k_1})}"{name=F}] 
              \ar[rr,bend right=10,"{(k_1,B_{k_1})}"{name=G},swap]
             & & (v_1+k_1+v_2+v_3,H_{k_1}+H_{k_2}+H_{k_3}) \\ \ar[Rightarrow,from=F,to=G,"{exp(-B_{k_1}^{low}(v_2,v_3))}"{swap,pos=0.5},shorten >=1.5pt] &
            \end{tikzcd},
    \end{equation}

    \begin{equation}
            \begin{tikzcd}[column sep = 6ex]
            (v_1+v_2+v_3,H_{k_1}+H_{k_2}+H_{k_3}) \ar[rr,bend left=70,"{(k_2,B_{k_2}+\iota_{k_2}H_{k_1})}"{name=F},pos=0.5] 
              \ar[rr,bend right=10,"{(k_2,B_{k_2}+\iota_{k_2}H_{k_1})}"{name=G},swap]
             & & (v_1+v_2+k_2+v_3,H_{k_1}+H_{k_2}+H_{k_3}) \\ \ar[phantom,from=F,to=G,"{exp(-H_{k_1}^{low}(v_2+k_2,v_3,k_2) +H_{k_1}^{low}(v_2,k_2,v_3) )}"{swap,pos=0.5,font=\footnotesize},shorten >=1.5pt] &
            \end{tikzcd},
    \end{equation}

    \begin{equation}
            \begin{tikzcd}[column sep = 6ex]
            (v_1+v_2+v_3,H_{k_1}+H_{k_2}+H_{k_3}) \ar[rr,bend left=70,"{(k_3,B_{k_3}+\iota_{k_3}(H_{k_1}+H_{k_2}))}"{name=F},pos=0.55] 
              \ar[rr,bend right=10,"{(k_3,B_{k_3}+\iota_{k_3}H_{k_2}+\iota_{k_3}H_{k_1})}"{name=G},swap]
             & & (v_1+v_2+v_3+k_3,H_{k_1}+H_{k_2}+H_{k_3}) \\ \ar[Rightarrow,from=F,to=G,"{exp(-H_{k_1}^{low}(v_2,v_3+k_3,k_3))}"{swap,pos=0.5,font=\footnotesize},shorten = 3ex] &
            \end{tikzcd}.
    \end{equation}
    Finally, the pentagonator of $(v_i,H_{k_i})$, $i= 1, \,...,\,4$ is
    \begin{equation}
            \begin{tikzcd}[column sep = 5ex]
            (v_1+v_2+v_3+v_4,H_{k_1}+H_{k_2}+H_{k_3}+H_{k_4}) \ar[rr,bend left=30,"{id}"{name=F}] 
              \ar[rr,bend right=10,"{id}"{name=G},swap]
             & & (v_1+v_2+v_3+v_4,H_{k_1}+H_{k_2}+H_{k_3}+H_{k_4}) \\ \ar[Rightarrow,from=F,to=G,"{exp(-H_{k_1}^{low}(v_2,v_3,v_4))}"{swap,pos=0.4},shorten >=1.5pt] &
            \end{tikzcd}.
            \end{equation}    
    Checking that all axioms are satisfied amounts to tedious but straightforward computations that we include in \cref{sec:welldefined}. The Lie 3-group $T_2 \mathbb B_n^{F_2}$ is defined to be the Lie sub-3-group of $T_2 \mathbb B_n^{F_1}$ obtained by setting $H_k = 0$.

    \begin{proposition}\label{prop:t2dnwhet2bn}
    There is a homomorphism of Lie 3-groups $T_2\mathbb B_n^{F_1} \leftarrow T_2\mathbb D_n^{F_1}$ restricting to $T_2\mathbb B_n^{F_2} \leftarrow T_2\mathbb D_n^{F_2}$ and inducing homotopy equivalences in both cases.
    \end{proposition}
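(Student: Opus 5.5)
The plan is to write down the homomorphism $\phi: T_2\mathbb D_n^{F_1} \rightarrow T_2\mathbb B_n^{F_1}$ as a "forgetful" map on the 2-groupoid data, corrected by the exponential of the connection-type component. On objects, set $\phi(v,B,H_k) := (v,H_k)$. On arrows, set $\phi(k,B_k,A) := (k,B_k)$. On 2-cells $(A_k,z): (k,B_k,A) \Rightarrow (k,B_k,A+A_k)$, set $\phi(A_k,z) := f$ with $f(x) := z\,\exp(A_k(x))$. This is a well-defined function on $\mathbb R^n/\mathbb Z^n$ because $A_k \in (\mathbb Z^n)^{\ast}$, with the convention $\exp(t) = e^{2\pi i t}$ used throughout the paper. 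We may need to twist $f$ by the $A$-dependent factor $\exp(A(x))$ so that the assignment is compatible with the $A$-shifts appearing in $L$ and $R$. The target arrows carry no $A$, so the 2-cell assignment must absorb these shifts; I would first test the naive choice and correct it if necessary.

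With this choice, $\phi$ is a strict 2-functor with trivial compositor: vertical and horizontal composition add the $A_k$ and multiply the $z$, and $\exp$ of a sum is a product. For the multiplicator arrows $\phi(g_1,g_2)$ I take identities, since $\phi$ of a product has the same $(v_1+v_2,H_{k_1}+H_{k_2})$ as the product of the images, and the associator arrows of $T_2\mathbb D_n^{F_1}$ are of the form $(0,0,\cdot)$ and map to identities.

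The multiplicator 2-cells are then defined so that the relevant squares commute, taking into account that $\phi\circ L$ and $L\circ\phi$ differ on 2-cells. On the left, $\phi$ sends the 2-cell $(A_{k_2},z_2)$ to $z_2\exp(A_{k_2})$, which is unchanged by $L$, while the $\delta$-shift in the arrow disappears; these squares should hold on the nose. On the right, $R$ sends $(A_{k_1},z_1)$ to $(A_{k_1},z_1\exp(-\iota_{v_2}A_{k_1}))$, and under $\phi$ this becomes $z_1\exp(A_{k_1}(\cdot - v_2)) = v_2^{\ast}(z_1\exp A_{k_1})$. So it matches $R_{v_2}$ in $T_2\mathbb B_n^{F_1}$ exactly, which presumably explains the $\exp(-\iota_{v_2}A_{k_1})$ factor in the definition. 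Comparing the remaining structure (the compositors of $L$, the interchange cells, the associator 2-cells and the pentagonators) term by term, each is the $\phi$-image of the corresponding one: for example, $(-\iota_{k_2}B_{k_1},\exp(-B^{low}_{k_1}(v_2,k_2)))$ maps to $\exp(-\iota_{k_2}B_{k_1}-B_{k_1}^{low}(v_2,k_2))$. I therefore expect all multiplicator and associator 2-cells of $\phi$ to be identities, so that the coherence diagrams of \cref{def:hom3groups} reduce to checking that $\phi$ strictly preserves every piece of structure. Setting $H_k=0$ on both sides gives the restriction to $T_2\mathbb D_n^{F_2}\rightarrow T_2\mathbb B_n^{F_2}$.

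It remains to show that $\phi$ is a homotopy equivalence, i.e.\ that it induces isomorphisms on $\pi_0,\pi_1,\pi_2$ of the underlying 2-groupoids, and is smoothly essentially surjective in a suitable local sense.

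On $\pi_0$, objects of $T_2\mathbb D$ modulo arrows are $\mathbb R^n/\mathbb Z^n\times\Lambda^3$, because $B\in\Lambda^2(\mathbb R^n)^{\ast}$ can be shifted by $\Lambda^2(\mathbb Z^n)^{\ast}$. This needs care: $B$ only changes by integral $B_k$, so the quotient is $\Lambda^2(\mathbb R^n)^{\ast}/\Lambda^2(\mathbb Z^n)^{\ast}$, a torus. Hence $\phi$ is not a bijection on $\pi_0$ as discrete sets, but it is a fibration with contractible fibers up to the torus factor. This suggests that "homotopy equivalence" should be understood as a weak equivalence of the geometric realizations, where the $\mathbb R^n$ and $\Lambda^2(\mathbb R^n)^{\ast}$ factors are contractible.

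So I would compare homotopy groups of the nerves. In $T_2\mathbb D$, the contractible factors $\mathbb R^n$, $\Lambda^2(\mathbb R^n)^{\ast}$ and $(\mathbb R^n)^{\ast}$ are acted on freely by $\mathbb Z^n$, $\Lambda^2(\mathbb Z^n)^{\ast}$ and $(\mathbb Z^n)^{\ast}$. The result is $T^n\times\Lambda^3\times T^{\binom n2}\times$ a torus-type 2-stage part with $U(1)$, which should match $\mathbb R^n/\!/\mathbb Z^n\times\Lambda^3\times B\Lambda^2(\mathbb Z^n)^{\ast}\times B^2C^\infty(T^n,U(1))$ using $C^\infty(T^n,U(1))\simeq U(1)\times H^1(T^n,\mathbb Z)\times$ a contractible part.

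The main obstacle is this last identification. One must check that the map $(A_k,z)\mapsto z\exp(A_k)$ induces a weak equivalence $B((\mathbb Z^n)^{\ast}\times U(1))\rightarrow BC^\infty(T^n,U(1))$; I would prove it via the splitting of $C^\infty(T^n,U(1))$ into the image of this map times the contractible space of null-homotopic functions with value $1$ at $0$. One must also match the $B\Lambda^2(\mathbb Z^n)^{\ast}$ factor with the $\Lambda^2(\mathbb R^n)^{\ast}$-object component, using the long exact sequence of the fibration.
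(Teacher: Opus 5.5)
Your proposal is correct and follows the paper's proof. You use the same forgetful map with $(A_k,z)\mapsto z\exp(A_k(\cdot))$, check by inspection of the formulas that it strictly preserves all structure cells, and deduce the homotopy equivalence from two facts: $\Lambda^2(\mathbb R^n)^{\ast}$ and $(\mathbb R^n)^{\ast}$ are contractible, and $(\mathbb Z^n)^{\ast}\times U(1)\hookrightarrow C^{\infty}(\mathbb R^n/\mathbb Z^n,U(1))$ is a homotopy equivalence. The paper cites Nikolaus--Waldorf for the latter, whereas you prove it directly by splitting off the contractible group of null-homotopic functions with value $1$ at $0$.

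You should drop the hedge about twisting by $\exp(A(x))$. Your own check of $R_{(v_2,H_{k_2})}$ shows the naive choice already works, and such a factor would not even be a function on $\mathbb R^n/\mathbb Z^n$, since $A$ is not integral.
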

    \begin{proof}

    By inspection of the formulas that define both 3-groups, it is clear that there are homomorphisms of Lie 3-groups $T_2\mathbb D_n^{F_1} \rightarrow T_2\mathbb B_n^{F_1}$ obtained simply by forgetting the components $B \in \Lambda^2 (\mathbb R^n)^{\ast}$ and $A \in (\mathbb R^n)^{\ast}$, while embedding $(\mathbb Z^n)^{\ast} \times U(1)\rightarrow C^{\infty}(\mathbb R^n/\mathbb Z^n,U(1))$, $(A_k,z) \mapsto z \cdot \exp(A_k(\cdot))$. This embedding is a homotopy equivalence \cite{Nikolaus:2018qop}, and the spaces $\Lambda^2 (\mathbb R^n)^{\ast}$ and $(\mathbb R^n)^{\ast}$ are contractible. Therefore, $T_2\mathbb D_n^{F_1} \rightarrow T_2\mathbb B_n^{F_1}$ is a homotopy equivalence. The fact that it restricts to $T_2\mathbb D_n^{F_2} \rightarrow T_2\mathbb B_n^{F_2}$ and is also a homotopy equivalence there is straightforward.
    \end{proof}

    \subsection{The 2-category of gerbes over $\mathbb R^n/\mathbb Z^n$}\label{sec:appgerbes}
  
    Consider the 2-category $\mathcal{G}(\mathbb R^n/\mathbb Z^n)$ of gerbes over $\mathbb R^n/\mathbb Z^n$. Note that gerbes can be multiplied with each other by tensor product, and that they can be pulled back by diffeomorphisms of the form $[v]:\mathbb R^n/ \mathbb Z^n \rightarrow \mathbb R^n/\mathbb Z^n$, $[x] \mapsto [x-v]$ for $[v] \in \mathbb R^n/\mathbb Z^n$. This defines a 3-group structure on the 2-groupoid $\mathbb R^n/ \mathbb Z^n \times \mathcal{G}(\mathbb R^n/\mathbb Z^n)$, described by the multiplication pseudofunctors
    \begin{align}
        \begin{split}
            L_{([v_1],\mathcal{L}_1)}: \mathbb R^n/ \mathbb Z^n \ltimes \mathcal{G}(\mathbb R^n/\mathbb Z^n) &\rightarrow \mathbb R^n/ \mathbb Z^n \ltimes \mathcal{G}(\mathbb R^n/\mathbb Z^n) \\
            \begin{tikzcd}[ampersand replacement = \& ]
                ([v_2],\mathcal{L}_2) \ar[r,bend left = 40, "{\gamma_2}"{name=F}] \ar[r,bend right = 40, "{\gamma_2'}"{name=G},swap] \ar[Rightarrow,from=F,to=G,"{\psi_2}",swap] \& ([v_2],\mathcal{L}_2')
            \end{tikzcd} &\mapsto
            \begin{tikzcd}[ampersand replacement = \& ]
                ([v_1+v_2],[v_2]^{\ast}\mathcal{L}_1 \otimes \mathcal{L}_2) \ar[r,bend left = 40, "{id \otimes \gamma_2}"{name=F}] \ar[r,bend right = 40, "{id \otimes \gamma_2'}"{name=G},swap] \ar[Rightarrow,from=F,to=G,"{id \otimes \psi_2}",swap,pos=0.2] \& ([v_1+v_2],[v_2]^{\ast}\mathcal{L}_1 \otimes \mathcal{L}_2')
            \end{tikzcd} 
        \end{split},
    \end{align}
    \begin{align}
        \begin{split}
            R_{([v_2],\mathcal{L}_2)}:  \mathbb R^n/ \mathbb Z^n \ltimes \mathcal{G}(\mathbb R^n/\mathbb Z^n) &\rightarrow \mathbb R^n/ \mathbb Z^n \ltimes \mathcal{G}(\mathbb R^n/\mathbb Z^n) \\
            \begin{tikzcd}[ampersand replacement = \& ]
                ([v_1],\mathcal{L}_1) \ar[r,bend left = 40, "{\gamma_1}"{name=F}] \ar[r,bend right = 40, "{\gamma_1'}"{name=G},swap] \ar[Rightarrow,from=F,to=G,"{\psi_1}",swap] \& ([v_1],\mathcal{L}_1')
            \end{tikzcd} &\mapsto
            \begin{tikzcd}[ampersand replacement = \& ]
                ([v_1+v_2],[v_2]^{\ast}\mathcal{L}_1 \otimes \mathcal{L}_2) \ar[r,bend left = 40, "{[v_2]^{\ast}\gamma_1 \otimes id}"{name=F}] \ar[r,bend right = 40, "{[v_2]^{\ast}\gamma_1' \otimes id}"{name=G},swap] \ar[Rightarrow,from=F,to=G,"{[v_2]^{\ast}\psi_1 \otimes id}",swap,pos=0.2] \& ([v_1+v_2],[v_2]^{\ast}\mathcal{L}_1' \otimes \mathcal{L}_2)
            \end{tikzcd} 
        \end{split}.
    \end{align}
    The compositor, interchange, associator, and pentagonator 2-cells, as well as the associator arrows, are all gerbe isomorphisms or 2-isomorphisms that exist because fibered products of manifolds are canonically isomorphic. For notational simplicity, we regard these as identities, as is usual in the literature.

    \begin{theorem}\label{th:t2bnf1isgerbes}
    The Lie 3-group $T_2\mathbb B_n^{F_1}$ is isomorphic to the 3-group $\mathbb R^n/\mathbb Z^n \ltimes \mathcal{G}(\mathbb R^n/\mathbb Z^n)$.
    \end{theorem}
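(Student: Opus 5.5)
We will construct an explicit homomorphism of Lie 3-groups $\Phi: T_2\mathbb B_n^{F_1} \rightarrow \mathbb R^n/\mathbb Z^n \ltimes \mathcal{G}(\mathbb R^n/\mathbb Z^n)$ in the sense of \cref{def:hom3groups}, and show that it is an equivalence of underlying 2-groupoids. The starting point is a standard family of gerbes on the torus. We fix the acyclic submersion $\mathbb R^n \rightarrow \mathbb R^n/\mathbb Z^n$, so that $Y^{[2]} \cong \mathbb R^n \times \mathbb Z^n$ and $Y^{[3]} \cong \mathbb R^n \times \mathbb Z^n \times \mathbb Z^n$. To each $H \in \Lambda^3(\mathbb Z^n)^\ast$ we assign the bundle gerbe $\mathcal{L}_H$ with trivial line bundle over $Y^{[2]}$ and multiplication $\alpha(x,k,k') = \exp(H^{low}(x,k,k'))$. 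The cocycle condition on $Y^{[4]}$ reduces to trilinearity of $H^{low}$ together with integrality of $H^{low}$ on $\mathbb Z^n$, and the class of $\mathcal{L}_H$ in $H^3(\mathbb R^n/\mathbb Z^n,\mathbb Z) \cong \Lambda^3(\mathbb Z^n)^\ast$ is $H$ by the antisymmetrization relation in \cref{not:low}.

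The functor $\Phi$ is then defined as follows. On objects, $(v,H) \mapsto ([v], \mathcal{L}_H)$. On arrows, $(k,B_k): (v,H) \rightarrow (v+k,H)$ is sent to the isomorphism $\mathcal{L}_H \rightarrow \mathcal{L}_H$ given by the function $\beta_{B_k}(x,m) = \exp(B_k^{low}(x,m))$ on $Y^{[2]}$. The isomorphism condition $\beta_{13}\alpha = \alpha\beta_{23}\beta_{12}$ then holds because $B_k^{low}$ is bilinear and integral. On 2-cells, $f$ is sent to itself. Since $[v+k] = [v]$, sources and targets are preserved. By \eqref{eq:gxcohomology}, this functor is essentially surjective (every class in $H^3$ is hit), full on arrows up to 2-cells ($\pi_0$ of automorphisms is $H^2 \cong \Lambda^2(\mathbb Z^n)^\ast$, hit by $B_k$), and fully faithful on 2-cells. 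The only discrepancy is that $T_2\mathbb B_n^{F_1}$ has object set $\mathbb R^n$ rather than $\mathbb R^n/\mathbb Z^n$. This is harmless because the quotient groupoid $\mathbb R^n/\!/\mathbb Z^n$ is equivalent to the manifold $\mathbb R^n/\mathbb Z^n$, so $\Phi$ is an equivalence, which is the sense in which we read ``isomorphic'' (compare item \ref{it:main1}).

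Next we fix the multiplicative structure. We have $[v_2]^\ast \mathcal{L}_{H_1} \otimes \mathcal{L}_{H_2}$, with multiplication $\exp(H_1^{low}(x-v_2,k,k') + H_2^{low}(x,k,k'))$. The multiplicator arrow $\Phi(g_1g_2) \rightarrow \Phi(g_1)\Phi(g_2)$ is then the gerbe isomorphism with $\beta(x,m) = \exp(-H_1^{low}(v_2,x,m))$, possibly corrected by the terms appearing in $(\iota_{v_2}H)^{low}$. This is essentially the content of the relation $(\iota_m H)^{low} = \dots$ in \cref{not:low}. The left multiplication by $(v_1,H_{k_1})$ shifting $B_{k_2}$ by $\iota_{k_2}H_{k_1}$ matches the fact that tensoring $\beta_{B_{k_2}}$ with the identity of the translated gerbe must be conjugated by this multiplicator, with the discrepancy being exactly the compositor $L_{(v_1,H_{k_1})}(\dots)$. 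Likewise, the right action $v_2^\ast f_1$ matches pullback of 2-cells along $[v_2]$, while pulling back $\beta_{B_{k_1}}$ along $[v_2]$ produces the factor $\exp(-B_{k_1}^{low}(v_2,\cdot))$. That factor accounts for the interchange cell and the first associator 2-cell. We will choose the multiplicator 2-cells and the associator 2-cells $\phi(g_1,g_2,g_3)$ as the functions on $\mathbb R^n$ that absorb these discrepancies. All of them are exponentials of expressions in $H^{low}$ and $B^{low}$ evaluated at $v_i$, $k_i$ and the point $x$.

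The main obstacle is the verification of the coherence equations in \cref{def:hom3groups}, above all the intertwining of the pentagonators, where $\exp(-H_{k_1}^{low}(v_2,v_3,v_4))$ must emerge from the gerbe side, whose pentagonator is the identity. We plan to reduce every axiom to an identity among $U(1)$-valued functions on $\mathbb R^n$. Multilinearity of $H^{low}$ and $B^{low}$ turns each such identity into a polynomial identity in $x, v_i, k_i$. Terms that are integer-valued on integer arguments drop out after exponentiation, which is where care is needed. We will first verify the pentagonator axiom, since the multiplicator cells are uniquely forced by the lower axioms up to this check, and then handle the remaining naturality conditions, which are comparatively mechanical, in the same manner as the computations in \cref{sec:welldefined}.
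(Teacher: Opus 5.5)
There is a genuine gap, and it sits exactly where you call the difference between $\mathbb R^n/\!/\mathbb Z^n$ and $\mathbb R^n/\mathbb Z^n$ harmless. In your model everything lives over the single submersion $\mathbb R^n\rightarrow\mathbb R^n/\mathbb Z^n$. The gerbe $[v_2]^\ast\mathcal L_{H_1}$ is identified, via the representative $v_2$, with the bundle gerbe with cocycle $\exp(H_1^{low}(x-v_2,\cdot,\cdot))$, whose data mod $\mathbb Z$ depend only on $[v_2]$. All 1- and 2-morphisms are functions on $Y^{[2]}$ and $Y$.

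In that framework the multiplicator 2-cell $\phi(g_1,\gamma_2)$ for an arrow $\gamma_2=(k_2,B_{k_2})$ with $k_2\neq 0$ does not exist in general.
\begin{itemize}
\item The multiplicator arrows for $v_2$ and for $v_2+k_2$ are isomorphisms $\mathcal L_{H_1+H_2}\rightarrow\mathcal M$ into the same bundle gerbe, say $\beta_v(x,m)=\exp(H_1^{low}(v,x,m))$. With your conventions it is this $+$ sign that satisfies the isomorphism condition, not $\exp(-H_1^{low}(v_2,x,m))$.
\item The two composites around the square then differ by $\exp(H_1^{low}(k_2,x,m)\pm(\iota_{k_2}H_1)^{low}(x,m))$. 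A 2-cell exists only if this has the form $f(x+m)/f(x)$. For $\exp(Q(x,m))$ with $Q$ bilinear, that forces $Q$ to be symmetric (differentiate in $x$ and integrate over the torus).
\item For $n=3$, $H_1^{low}(a,b,c)=a_1b_2c_3$ and $k_2=e_2$, the discrepancy is $\exp(\mp x_1m_3)$. This is not symmetric, so the condition fails for either sign convention in $L_{(v_1,H_{k_1})}$.
\item Replacing $\beta_v$ by $\beta_v\cdot c_v$, with $c_v$ a smooth family of automorphisms of $\mathcal L_{H_1+H_2}$, does not help. The class of $c_v$ in $\Lambda^2(\mathbb Z^n)^\ast$ is locally constant in $v$, hence the same at $v_2$ and $v_2+k_2$.
\end{itemize}
So your heuristic cannot be realized: the shift by $\iota_{k_2}H_{k_1}$ does not come from conjugating with the multiplicator, with the compositor absorbing the rest. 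These isomorphisms are commuting functions, so conjugation does nothing. The discrepancy is a nonzero class of 1-morphisms, not a 2-cell.

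The missing idea is that the deck transformation $y\mapsto y+k$ is a strict automorphism of $\mathcal L_H$ that is in general not 2-isomorphic to the identity. In the example with $k=e_2$, a natural transformation to the identity would need $g(y+m)-g(y)\equiv\pm y_1m_3$ mod $\mathbb Z$, which is impossible for the same symmetry reason. Hence identifying $[v]^\ast\mathcal L_H$ with your standard model via $v$ or via $v+k$ differs by a nontrivial automorphism. This is exactly what the objects $v\in\mathbb R^n$ (rather than $[v]$) and the term $\iota_{k_2}H_{k_1}$ in $T_2\mathbb B_n^{F_1}$ record.

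The paper keeps track of this. Its $\Phi_{v,H}$ acts by $x\mapsto x-v$ on objects, so $\Phi_{v,H}$ and $\Phi_{v+k,H}$ differ by a deck transformation. The multiplicator 2-cell is built from $\Psi_{v,k,H}:\Phi_{v+k,H}\Rightarrow\Phi_{v,H}\circ L^{\iota_kH}$, whose components $(k,\exp(\Psi_{v,k,H}(x)))$ are arrows with nonzero lattice shift rather than functions on $\mathbb R^n$.

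To repair your plan, you must admit as morphisms in the target smooth functors over $\mathbb R^n/\mathbb Z^n$ that move objects, and natural transformations with shifted components. The cells with unshifted components should then go through essentially as you anticipate. These are the paper's $\rho_{v,B}$ and $\chi_{v_1,v_2,H}$, and the latter is where the pentagonator comes from.
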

    \begin{proof}
    Recall first that \eqref{eq:gxcohomology} describes the 2-category of gerbes over a space $X$. In particular, for $X = \mathbb R^n/\mathbb Z^n$ we obtain 
    \begin{equation}\label{eq:gerb2cat}
        \mathcal{G}(\mathbb R^n/ \mathbb Z^n) \cong \Lambda^3 (\mathbb Z^n)^{\ast} \times B \Lambda^2 (\mathbb Z^n)^{\ast} \times B^2C^{\infty}(\mathbb R^n/\mathbb Z^n,U(1)) =: \mathcal{G}_{red}(\mathbb R^n/ \mathbb Z^n).
    \end{equation}
    Note that $T_2\mathbb B_n^{F_1}$ is precisely a 3-group structure on the 2-category $\mathbb R^n/\mathbb Z^n \times \mathcal{G}_{red}(\mathbb R^n/\mathbb Z^n)$. We prove the by constructing an isomorphism between this 3-group structure and that on $\mathbb R^n/\mathbb Z^n \ltimes \mathcal{G}(\mathbb R^n/\mathbb Z^n)$.

    First, we construct an explicit functor \eqref{eq:gerb2cat} or, equivalently, a functor $\phi: T_2\mathbb B_n^{F_1} \rightarrow \mathbb R^n/\mathbb Z^n \ltimes \mathcal{G}(\mathbb R^n/\mathbb Z^n)$. For $H \in \Lambda^3 (\mathbb Z^n)^{\ast}$ we let $\mathcal{L}^H$ be the gerbe with $\mathcal{L}^H_0 = \mathbb R^n$, $\mathcal{L}^H_1 = \mathbb R^n \times \mathbb Z^n \times U(1)$, composition
    \begin{equation}
        \begin{tikzcd}
            x \ar[r,bend left=40,"{(k_1,z_1)}"] & x+ k_1 \ar[r,bend left=40,"{(k_2,z_2)}"] &x+ k_1+k_2
        \end{tikzcd} =
        \begin{tikzcd}[column sep = 20ex]
            x \ar[r,bend left=40,"{(k_1+k_2,z_1z_2exp(H^{low}(x,k_1,k_2))}"] & x+ k_1+k_2,
        \end{tikzcd}
    \end{equation}
    and projection $\mathcal{L}^H \rightarrow \mathbb R^n/\mathbb Z^n$, $x \mapsto [x]$. For $B \in \Lambda^2 (\mathbb Z^n)^{\ast}$ we define a functor
    \begin{align}
    \begin{split}
        L^B:\mathcal{L}^H &\rightarrow \mathcal{L}^H \\
        \begin{tikzcd}[ampersand replacement = \&]
            x \ar[r,bend left=40,"{(k,z)}"] \& x+ k
        \end{tikzcd} &\mapsto \begin{tikzcd}[ampersand replacement = \&]
            x \ar[r,bend left=40,"{(k,zexp(B^{low}(x,k))}",pos=0.55] \& x+ k.
        \end{tikzcd}
    \end{split}
    \end{align}
    For $f \in C^{\infty}(\mathbb R^n/\mathbb Z^n,U(1))$ we define the natural transformation 
    \begin{align}
    \begin{split}
        f:L^B &\Rightarrow L^B \\
        x &\mapsto \begin{tikzcd}[ampersand replacement = \&] x \ar[r,bend left=40,"{(0,f(x))}"] \& x \end{tikzcd}.
    \end{split}
    \end{align}
    This concludes the description of $\phi$, which preserves vertical and horizontal composition strictly.

    We proceed now to describe the action of $\mathbb R^n/\mathbb Z^n$ on $\mathcal{G}_{red}(\mathbb R^n/ \mathbb Z^n)$ in terms of $\phi$, which will let us define the multiplicator arrows and 2-cells of $\phi$. For $[v] \in \mathbb R^n/\mathbb Z^n$ we note that the gerbe $[v]^{\ast}\mathcal{L}^H$ is defined by the same formulas as $\mathcal{L}^H$ except for the projection to $\mathbb R^n/\mathbb Z^n$, which changes to $x \mapsto [x-v]$. For an explicit representative $v \in \mathbb R^n$ we define the isomorphism
    \begin{align}
    \begin{split}
        \Phi_{v,H}: \mathcal{L}^H &\rightarrow [v]^{\ast}\mathcal{L}^H\\
        \begin{tikzcd}[ampersand replacement = \&]
            x \ar[r,bend left=40,"{(k,z)}"] \& x+ k
        \end{tikzcd} &\mapsto \begin{tikzcd}[ampersand replacement = \&]
            x-v \ar[r,bend left=40,"{(k,zexp(H^{low}(v,x,k))}",pos=0.55] \& x-v+ k
        \end{tikzcd}.
    \end{split}
    \end{align}
    This indeed preserves composition because
    \begin{equation}
        H^{low}(v,x,k_1) + H^{low}(v,x+k_1,k_2) + H^{low}(x-v,k_1,k_2) = H^{low}(x,k_1,k_2) + H^{low}(v,x,k_1+k_2).
    \end{equation}
    Thus, we may define the multiplicator arrows of $\phi$ by
    \begin{equation}
        \phi((v_1,H_1),(v_2,H_2)): \mathcal{L}^{H_1+H_2} \stackrel{\Phi_{v_2,H_1} \otimes id}{\rightarrow} [v_2]^{\ast}\mathcal{L}^{H_1} \otimes \mathcal{L}^{H_2}.
    \end{equation}
    For different representatives $v$, $v+k$, we define a natural transformation 
    \begin{align}
    \begin{split}
        \Psi_{v,k,H}:\Phi_{v+k,H} &\Rightarrow \Phi_{v,H} \circ L^{\iota_kH} \\
        x &\mapsto \begin{tikzcd}[ampersand replacement = \&] x-v-k \ar[r,bend left=40,"{(k,exp(\Psi_{v,k,H}(x)))}"] \& x-v \end{tikzcd},
    \end{split}
    \end{align}
    where $\Psi_{v,k,H}(x) := H^{low}(v+k,x,k) - H^{low}(v,k,x)$. This is indeed natural because
    \begin{align}
    \begin{split}
        H^{low}(v+k,x,k') + &\Psi_{v,k,H}(x+k') + H^{low}(x-v-k,k',k) \\
        &=  \Psi_{v,k,H}(x) + (\iota_kH)^{low}(x,k') + H^{low}(v,x,k') + H^{low}(x-v-k,k,k').
    \end{split}
    \end{align}
    We note then that
    \begin{align}
        \Phi_{v+k+k',H} \stackrel{\Psi_{v+k,k',H}}{\Rightarrow} \Phi_{v+k,H} \circ L^{\iota_{k'}H} \stackrel{\Psi_{v,k,H} \circ id_{L^{\iota_{k'}H}}}{\Rightarrow} \Phi_{v,H} \circ L^{\iota_{k+k'}H}
    \end{align}
    equals
    \begin{equation}\label{eq:relation1}
        \Psi_{v,k+k',H} \cdot exp(-H^{low}(v,k',k) + H^{low}(\cdot,k',k) - H^{low}(k',\cdot,k) - H^{low}(k,k',\cdot)).
    \end{equation}
    Next we define natural transformations
    \begin{equation}
            \begin{tikzcd}[column sep=14ex]
             \mathcal{L}^H \ar[r,"{L^B}",{name=U}] \ar[d,"{\Phi_{v,H}}",swap] 
             & \mathcal{L}^H \ar[d,"{\Phi_{v,H}}"] \ar[Rightarrow, dl,"{\rho_{v,B}}",swap]\\
             \left[v\right]^{\ast}\mathcal{L}^H \ar[r,"{[v]^{\ast}L^B}",{name=D},swap] 
             & \left[v\right]^{\ast}\mathcal{L}^H
            \end{tikzcd}
    \end{equation}
    given by
    \begin{align}
    \begin{split}
        \rho_{v,B}: \Phi_{v,H} \circ L^{B} &\Rightarrow [v]^{\ast}L^B \circ \Phi_{v,H} \\
        x &\mapsto \begin{tikzcd}[ampersand replacement = \&] x-v \ar[r,bend left=40,"{(0,exp(-B^{low}(v,x)))}"] \& x-v \end{tikzcd},
    \end{split}
    \end{align}
    which are indeed natural because
    \begin{equation}
        B^{low}(x,k) + H^{low}(v,x,k) - B^{low}(v,x+k) = -B^{low}(v,x) + H^{low}(v,x,k)+B^{low}(x-v,k).
    \end{equation}
    The dependence of $\rho_{v,B}$ on the representative $v$ is computed by observing
    \begin{equation}\label{eq:relation2}
        \begin{tikzcd}[column sep=4ex]
              & \mathcal{L}^H \ar[r,"{L^B}",{name=U}] \ar[dd,"{\Phi_{v+k,H}}"{name=i},pos=0.2] \ar[dl,"{L^{\iota_kH}}",swap]
             & \mathcal{L}^H \ar[dd,"{\Phi_{v+k,H}}"] \ar[Rightarrow, ddl,"{\rho_{v+k,B}}",pos=0.6]  & \\
             \mathcal{L}^H \ar[dr,"{\Phi_{v,H}}",swap] & & \\
             & \left[v\right]^{\ast}\mathcal{L}^H \ar[r,"{[v]^{\ast}L^B}"{name=S},swap,pos=0.5] \ar[Rightarrow,from=i,to=2-1,"{\Psi_{v,k,H}}",shorten=1ex,pos=0.75]
             & \left[v\right]^{\ast}\mathcal{L}^H 
            \end{tikzcd} =
        \begin{tikzcd}[column sep=5ex,row sep = 7ex]
               \mathcal{L}^H \ar[r,"{L^B}",{name=U}] \ar[d,"{L^{\iota_kH}}",swap]
             & \mathcal{L}^H \ar[d,"{L^{\iota_kH}}",swap] \ar[dd,bend left=60,"{\Phi_{v+k,H}}"{name=j},pos=0.85,shorten=1ex] & \\
             \mathcal{L}^H \ar[d,"{\Phi_{v,H}}"{name=i},pos=0.2,swap] \ar[r,"{L^B}"] & \mathcal{L}^H \ar[d,"{\Phi_{v,H}}",swap,pos=0.7] \ar[Rightarrow, dl,"{\rho_{v,B}}",swap,pos=0.6] & \\
             \left[v\right]^{\ast}\mathcal{L}^H \ar[r,"{[v]^{\ast}L^B}"{name=S},swap,pos=0.5] \ar[Rightarrow,from=j,to=2-2,"{\Psi_{v,k,H}}",swap,pos=1]
             & \left[v\right]^{\ast}\mathcal{L}^H 
            \end{tikzcd} exp(-\iota_kB-B^{low}(v,k)).
    \end{equation}
    The multiplicator 2-cells of $\phi$ are then defined by
    \begin{equation}
            \begin{tikzcd}[column sep=14ex]
             \mathcal{L}^{H_1+H_2} \ar[r,"{\Phi_{v_2,H_1} \otimes id}",{name=U}] \ar[d,"{L^{B_1} \otimes id}",swap] 
             & v_2^{\ast}\mathcal{L}^{H_1} \otimes \mathcal{L}^{H_2} \ar[d,"{v_2^{\ast}L^{B_1} \otimes id}"] \\
             \mathcal{L}^{H_1+H_2}  \ar[r,"{\Phi_{v_2,H_1} \otimes id}",{name=D},swap] \ar[Rightarrow, ur,"{\rho_{v_2,B_1}}"] 
             & v_2^{\ast}\mathcal{L}^{H_1} \otimes \mathcal{L}^{H_2}
            \end{tikzcd}, \quad
            \begin{tikzcd}[column sep=14ex]
             \mathcal{L}^{H_1+H_2} \ar[r,"{\Phi_{v_2,H_1} \otimes id}",{name=U}] \ar[d,"{id \otimes L^{-\iota_{k_2}H_1 + B_2}}",swap] 
             & v_2^{\ast}\mathcal{L}^{H_1} \otimes \mathcal{L}^{H_2} \ar[d,"{id \otimes L^{B_2}}"] \\
             \mathcal{L}^{H_1+H_2}  \ar[r,"{\Phi_{v_2+k_2,H_1} \otimes id}",{name=D},swap] \ar[Rightarrow, ur,"{\Psi_{v_2,k_2,H_1}^{-1}}"] 
             & v_2^{\ast}\mathcal{L}^{H_1} \otimes \mathcal{L}^{H_2}
            \end{tikzcd}
            \end{equation}
    Their naturality is immediate, their preservation of composition follows from \eqref{eq:relation1}, and their preservation of interchange cells follows from \eqref{eq:relation2}.

    Next we study the behaviour of $\Phi_{v,H}$ with respect to addition, which will let us define the associator 2-cells of $\phi$. It is clear that $\Phi_{v,H_1+H_2} = \Phi_{v,H_1} \circ \Phi_{v,H_2}$. Now, given $v_1, \,v_2 \in \mathbb R^n$, we define a natural transformation
    \begin{align}
    \begin{split}
        \chi_{v_1,v_2,H}:v_1^{\ast}\Phi_{v_2,H} \circ \Phi_{v_1,H} &\Rightarrow \Phi_{v_1+v_2,H} \\
        x &\mapsto \begin{tikzcd}[ampersand replacement = \&] x-v_1-v_2 \ar[r,bend left=40,"{(0,exp(H^{low}(v_2,v_1,x)))}"] \& x-v_1-v_2 \end{tikzcd},
    \end{split}
    \end{align}
    which is indeed natural because
    \begin{equation}
    H^{low}(v_1,x,k) + H^{low}(v_2,x-v_1,k) + H^{low}(v_2,v_1,x+k) = H^{low}(v_2,v_1,x) + H^{low}(v_1+v_2,x,k).
    \end{equation}
    We note the following relations, where we omit pull-backs by translations from the notation for clarity.
    \begin{equation}\label{eq:relation5}
         \begin{tikzcd}[column sep = 5ex, row sep = 13ex]
            \mathcal{L}^H \ar[r,"{L^B}"] \ar[dr,bend right = 40,"{\Phi_{v_1+v_2,H}}"{name=T},swap] & \mathcal{L}^H \ar[r,"{\Phi_{v_1,H}}"{name=S}]  \ar[rr,bend right = 50,"{}"{name=U},swap] \ar[Rightarrow,from=1-3,to=U,"{\chi_{v_1,v_2,H}}"{name=E},pos=0.25] &\mathcal{L}^H \ar[r,"{\Phi_{v_2,H}}"] &\mathcal{L}^H \\
            & \mathcal{L}^H  \ar[urr,bend right = 40,"{L^B}"{name=B},swap] & & \ar[Rightarrow,from=U,to=2-2,"{\rho_{v_1+v_2,B}}",swap,pos=0.4]
        \end{tikzcd} = 
        \begin{tikzcd}[column sep = 5ex, row sep = 6ex]
            \mathcal{L}^H \ar[ddrr,bend right=60,"{\Phi_{v_1+v_2,H}}"{name=p},swap] \ar[r,"{L^B}"{name=A},pos=0.55] \ar[dr,bend right = 20,"{\Phi_{v_1,H}}"{name=L, font = \tiny}, pos=0.50] & \mathcal{L}^H \ar[r,"{\Phi_{v_1,H}}"] &\mathcal{L}^H \ar[r,"{\Phi_{v_2,H}}"{name=f}] & \mathcal{L}^H \\
             & \mathcal{L}^H  \ar[ur,bend right = 50,"{L^B}"{name=B,font=\tiny},pos=0.4] & \ar[Rightarrow,from=1-2,to=2-2,shorten=1ex,"{\rho_{v_1,B}}",pos=0.4] \\
            & & \mathcal{L}^H \ar[uur, bend right= 60, "{L^{B}}"{name=Y},swap] \ar[from=2-2,to=3-3,"{\Phi_{v_2,H}}",pos=0.5] \ar[Rightarrow,from=2-2,to=p,"{\chi_{v_1,v_2,H}}",shorten=1.5ex] \ar[Rightarrow,from=f,to=3-3,"{\rho_{v_2,B}}",shorten=2ex ]& 
        \end{tikzcd} exp(-B^{low}(v_2,v_1)),
    \end{equation}
    \begin{align}\label{eq:relation4}
    \begin{split}
        \begin{tikzcd}[column sep = 4ex, row sep = 6ex, ampersand replacement = \&]
            \mathcal{L}^H \ar[r,"{\Phi_{v_1,H}}"{name=S}] \ar[ddr,bend right = 30,"{L^{\iota_{k_2}H}}"{name=T},swap]  \&\mathcal{L}^H  \ar[rr,"{\Phi_{v_2+k_2,H}}"{name=A},pos=0.55] \ar[dr,bend right = 20,"{L^{\iota_{k_2H}}}"{name=L, font = \tiny}, pos=0.70] \& \&\mathcal{L}^H \\
            \& \& \mathcal{L}^H  \ar[ur,bend right = 50,"{\Phi_{v_2,H}}"{name=B,font=\tiny},pos=0.4] \& \ar[Rightarrow,from=A,to=2-3,shorten=1ex,"{\Psi_{v_2,k_2,H}}",swap,pos=0.5] \ar[Rightarrow, from=1-2, to=T,"{\rho_{v_1,\iota_{k_2}H}^{-1}}",shorten=2.5ex,pos=0.35] \\
            \& \mathcal{L}^H \ar[ur,bend left=20,"{\Phi_{v_1,H}}",pos=0.2] \ar[uurr, bend right= 60, "{\Phi_{v_1+v_2,H}}"{name=Y},swap,pos=0.4] \ar[Rightarrow,from=2-3,to=Y,"{\chi_{v_1,v_2,H}}",swap,shorten=2ex] \& 
        \end{tikzcd} =
        \begin{tikzcd}[column sep = 8ex, row sep = 9ex, ampersand replacement = \&]
            \mathcal{L}^H \ar[r,"{\Phi_{v_1,H}}"{name=S}] \ar[dr,bend right = 40,"{L^{\iota_{k_2}H}}"{name=T,font=\tiny},swap,pos=0.9] \ar[rr,bend right = 50,"{}"{name=U},swap] \ar[Rightarrow,from=1-2,to=U,"{\chi_{v_1,v_2+k_2,H}}"{name=E},pos=0.25] \& \mathcal{L}^H \ar[r,"{\Phi_{v_2+k_2,H}}"] \& \mathcal{L}^H \\
            \& \mathcal{L}^H  \ar[ur,bend right = 50,"{\Phi_{v_1+v_2,H}}"{name=B,font=\tiny},swap,pos=0.1] \& \ar[Rightarrow,from=U,to=2-2,"{\Psi_{v_1+v_2,k_2,H}}",pos=0.4]
        \end{tikzcd} \alpha(v_1,v_2,k_2,H),
    \end{split}
    \end{align}
    where $\alpha(v_1,v_2,k_2,H) := exp(-H^{low}(v_2+k_2,v_1,k_2) + H^{low}(v_2,k_2,v_1))$,
    \begin{equation}\label{eq:relation3}
        \begin{tikzcd}[column sep = 8ex, row sep = 13ex]
            \mathcal{L}^H \ar[r,"{\Phi_{v_1+k_1,H}}"{name=S}] \ar[dr,bend right = 30,"{L^{\iota_{k_1}H}}"{name=T},swap] \ar[Rightarrow,from=1-1,to=2-2,shorten=2ex,"{\Psi_{v_1,k_1,H}}",pos=0.4] &\mathcal{L}^H \ar[r,"{\Phi_{v_2,H}}"] &\mathcal{L}^H \\
            & \mathcal{L}^H \ar[u,"{\Phi_{v_1,H}}"{name=A},swap,pos=0.2] \ar[ur,bend right = 30,"{\Phi_{v_1+v_2,H}}"{name=B},swap] & \ar[Rightarrow,from=1-2,to=B,shorten=2ex,"{\chi_{v_1,v_2,H}}",pos=0.3]
        \end{tikzcd} = 
        \begin{tikzcd}[column sep = 8ex, row sep = 13ex]
            \mathcal{L}^H \ar[r,"{\Phi_{v_1+k_1,H}}"{name=S}] \ar[dr,bend right = 40,"{L^{\iota_{k_1}H}}"{name=T},swap] \ar[rr,bend right = 50,"{}"{name=U},swap] \ar[Rightarrow,from=1-2,to=U,"{\chi_{v_1+k_1,v_2,H}}"{name=E},pos=0.25] &\mathcal{L}^H \ar[r,"{\Phi_{v_2,H}}"] &\mathcal{L}^H \\
            & \mathcal{L}^H  \ar[ur,bend right = 40,"{\Phi_{v_1+v_2,H}}"{name=B},swap] & \ar[Rightarrow,from=U,to=2-2,"{\Psi_{v_1+v_2,k_1,H}}",pos=0.4]
        \end{tikzcd} exp(-H^{low}(v_2,v_1+k_1,k_1)).
    \end{equation}
    \begin{equation}\label{eq:relation6}
         \begin{tikzcd}[column sep = 5ex, row sep = 13ex]
            \mathcal{L}^H \ar[r,"{\Phi_{v_1,H}}"] \ar[rrr,bend right = 80,"{\Phi_{v_1+v_2+v_3,H}}"{name=T},swap] & \mathcal{L}^H \ar[r,"{\Phi_{v_2,H}}"{name=S}]  \ar[rr,bend right = 50,"{}"{name=U},swap] \ar[Rightarrow,from=1-3,to=U,"{\chi_{v_2,v_3,H}}"{name=E},pos=0.25] &\mathcal{L}^H \ar[r,"{\Phi_{v_3,H}}"] &\mathcal{L}^H  \ar[Rightarrow,from=U,to=T,"{\chi_{v_1,v_2+v_3,H}}",swap,pos=0.4]
        \end{tikzcd} = 
        \begin{tikzcd}[column sep = 6ex, row sep = 13ex]
            \mathcal{L}^H \ar[r,"{\Phi_{v_1,H}}"{name=S}] \ar[rr,bend right = 50,"{}"{name=U},swap] \ar[Rightarrow,from=1-2,to=U,"{\chi_{v_1,v_2,H}}"{name=E},pos=0.25] \ar[rrr,bend right = 80,"{\Phi_{v_1+v_2+v_3,H}}"{name=T},swap] &\mathcal{L}^H \ar[r,"{\Phi_{v_2,H}}"] &\mathcal{L}^H  \ar[r,"{\Phi_{v_3,H}}"] &\mathcal{L}^H \ar[Rightarrow,from=1-3,to=T,"{\chi_{v_1+v_2,v_3,H}}",shorten=1ex,pos=0.35]
        \end{tikzcd} exp(-H^{low}(v_3,v_2,v_1))
    \end{equation}
    Then we define the associator 2-cells as follows.
    \begin{equation}
            \begin{tikzcd}[column sep=18ex]
             \mathcal{L}^{H_1+H_2+H_3} \ar[r,"{id}",{name=U}] \ar[d,"{\Phi_{v_3,H_1+H_2}}",swap] 
             & \mathcal{L}^{H_1+H_2+H_3} \ar[d,"{v_3^{\ast}\Phi_{v_2,H_1}}"] \ar[Rightarrow, ddl,"{\chi_{v_3,v_2,H_1}^{-1}}",swap] \\
             v_3^{\ast}\mathcal{L}^{H_1+H_2} \otimes \mathcal{L}^{H_3} \ar[d,"{\Phi_{v_2+v_3,H_1}}",swap]
             & (v_2+v_3)^{\ast}\mathcal{L}^{H_1} \mathcal{L}^{H_2+H_3} \ar[d,"{\Phi_{v_3,H_2}}"] \\
             (v_2+v_3)^{\ast}\mathcal{L}^{H_1} \otimes v_3^{\ast}\mathcal{L}^{H_1} \otimes \mathcal{L}^{H_3}  \ar[r,"{id}",swap] & (v_2+v_3)^{\ast}\mathcal{L}^{H_1} \otimes v_3^{\ast}\mathcal{L}^{H_1} \otimes \mathcal{L}^{H_3}
            \end{tikzcd}
        \end{equation}
    One can then check that \eqref{eq:relation5}, \eqref{eq:relation4} and \eqref{eq:relation3} yield naturality of the associator 2-cells, while \eqref{eq:relation6} shows they intertwine the pentagonator 2-cells.
    \end{proof}

    \begin{remark}
        It also follows directly from the proof of \cref{th:t2bnf1isgerbes} that $T_2 \mathbb B_n^{F_2}$ is the full sub-3-group of $\mathbb R^n/\mathbb Z^n \ltimes \mathcal{G}(\mathbb R^n/\mathbb Z^n)$ with objects $\mathbb R^n/\mathbb Z^n \times \{1\}$, where $1$ denotes the trivial gerbe over $\mathbb R^n/\mathbb Z^n$.
    \end{remark}

    \subsection{Dimensional reduction to T-duality}\label{sec:dimred}

    We recall the Lie 2-groups controlling T-duality in \cite{Nikolaus:2018qop}. First, $T\mathbb D_n^{F_1}$ has arrows of the form
    \begin{equation}
        \begin{tikzcd}
            (v,A,B_k) \ar[r,bend left = 20, "{(k,A_k,[x])}"] &(v+k,A+A_k,B_k),
        \end{tikzcd}
    \end{equation}
    where $(v,A) \in \mathbb R^n \oplus (\mathbb R^n)^{\ast}$, $B_k \in \Lambda^2 (\mathbb Z^n)^{\ast}$, $(k,A_k) \in \mathbb Z^n \oplus (\mathbb Z^n)^{\ast}$, $[x] \in \mathbb R/ \mathbb Z$. These are composed as follows
    \begin{equation}
        \begin{tikzcd}[every label/.append style = {font = \footnotesize},column sep = 2ex]
            {\scriptstyle  (v,A,B_k)} \ar[r,bend left = 20, "{(k,A_k,[x])}"] & {\scriptstyle (v+k,A+A_k,B_k)} \ar[r,bend left = 20, "{(k',A_k',[x'])}"] & {\scriptstyle (v+k+k',A+A_k+A_k',B_k)}
        \end{tikzcd} = 
        \begin{tikzcd}[every label/.append style = {font = \footnotesize},column sep = 2ex]
            {\scriptstyle (v,A,B_k)} \ar[r,bend left = 20, "{(k+k',A_k+A_k',[x+x'])}"] & {\scriptstyle (v+k+k',A+A_k+A_k',B_k)}.
        \end{tikzcd}
    \end{equation}
    The multiplication functor is
    \begin{align}
    \begin{split}\label{eq:mtdnf1}
        m(&(v_1,A_1,B_{k_1},k_1,[x_1]),(v_2,A_2,B_{k_2},k_2,[x_2])) \\
        &= (v_1+v_2,A_1+A_2+\iota_{v_2}B_{k_1},B_{k_1}+B_{k_2},k_1+k_2,A_{k_1}+A_{k_2} + \iota_{k_2}B_{k_1}, [x_1+x_2-\iota_{v_2}A_{k_1} + B_{k_1}^{low}(v_2,k_2)])
    \end{split}
    \end{align}
    and the associator is
    \begin{align}
    \begin{split}
        \alpha(v_1,A_1,&B_{k_1},v_2,A_2,B_{k_2},v_3,A_3,B_{k_3}) \\
        &= (v_1+v_2+v_3,A_1+A_2+A_3+\iota_{v_2}B_{k_1} + \iota_{v_3}(B_{k_1}+B_{k_2}),B_{k_1} + B_{k_2} + B_{k_3},0,[B_{k_1}^{low}(v_2,v_3)]).
    \end{split}
    \end{align}
    On the other hand, $T\mathbb B_n^{F_1}$ has arrows of the form
    \begin{equation}
        \begin{tikzcd}
            (v,B_k) \ar[r,bend left = 20, "{(k,A_k,[x])}"] &(v+k,B_k),
        \end{tikzcd}
    \end{equation}
    where $v \in \mathbb R^n$, $B_k \in \Lambda^2 (\mathbb Z^n)^{\ast}$, $(k,A_k) \in \mathbb Z^n \oplus (\mathbb Z^n)^{\ast}$, $[x] \in \mathbb R/ \mathbb Z$. These are composed as follows
    \begin{equation}
        \begin{tikzcd}[every label/.append style = {font = \footnotesize},column sep = 2ex]
            {\scriptstyle  (v,B_k)} \ar[r,bend left = 20, "{(k,A_k,[x])}"] & {\scriptstyle (v+k,B_k)} \ar[r,bend left = 20, "{(k',A_k',[x'])}"] & {\scriptstyle (v+k+k',B_k)}
        \end{tikzcd} = 
        \begin{tikzcd}[every label/.append style = {font = \footnotesize},column sep = 2ex]
            {\scriptstyle (v,B_k)} \ar[r,bend left = 20, "{(k+k',A_k+A_k',[x+x'])}"] & {\scriptstyle (v+k+k',B_k)}.
        \end{tikzcd}
    \end{equation}
    The multiplication functor is
    \begin{align}
    \begin{split}\label{eq:mtbnf1}
        m(&(v_1,B_{k_1},k_1,[x_1]),(v_2,B_{k_2},k_2,[x_2])) \\
        &= (v_1+v_2,B_{k_1}+B_{k_2},k_1+k_2,A_{k_1}+A_{k_2} + \iota_{k_2}B_{k_1}, [x_1+x_2-\iota_{v_2}A_{k_1} + B_{k_1}^{low}(v_2,k_2)])
    \end{split}
    \end{align}
    and the associator is
    \begin{align}
    \begin{split}
        \alpha(v_1,&B_{k_1},v_2,B_{k_2},v_3,B_{k_3}) = (v_1+v_2+v_3,B_{k_1} + B_{k_2} + B_{k_3},0,[B_{k_1}^{low}(v_2,v_3)]).
    \end{split}
    \end{align}
    The Lie 2-groups $T \mathbb B_n^{F_2}$ and $T \mathbb D_n^{F_2}$ are the Lie sub-2-groups of $T \mathbb B_n^{F_1}$ and $T \mathbb D_n^{F_1}$ obtained by setting $B_k=0$.

    \begin{theorem}\label{th:dimredf1}
        There are injective homomorphisms of Lie 3-groups
        \begin{equation}
\begin{tikzcd}
	& {T \mathbb B_n^{F_1}} & {T \mathbb D_n^{F_1}} \\
	{T_2\mathbb B_{n+1}^{F_1}} & {T_2\mathbb D_{n+1}^{F_1}}
	\arrow[from=1-2, to=2-1]
	\arrow[from=1-3, to=2-2]
\end{tikzcd}
        \end{equation}
        restricting to their $F_2$ sub-3-groups, and such that the diagram \eqref{eq:tdualspan} commutes.
    \end{theorem}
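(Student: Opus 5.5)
Write points of $\mathbb R^{n+1}$ as $(v,t)$ with $v\in\mathbb R^n$, $t\in\mathbb R$, and similarly $(k,l)\in\mathbb Z^{n+1}$. The plan is to embed the $n$-dimensional Lie 2-groups into the $(n+1)$-dimensional Lie 3-groups as their ``$t=0$ slices''. A Lie 2-group is regarded as a Lie 3-group with only identity 2-cells. The extra circle direction $e_{n+1}$ is then used to encode the $U(1)$-valued and $(\mathbb Z^n)^\ast$-valued data of the 2-group in the 2-form and 3-form components.

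Concretely, for $T\mathbb B_n^{F_1}\to T_2\mathbb B_{n+1}^{F_1}$ I would define the object map
\[
(v,B_k)\mapsto \bigl((v,0),\,H_{B_k}\bigr),\qquad H_{B_k}:=B_k\wedge e_{n+1}^\ast ,
\]
so that $H^{low}_{B_k}(e_i,e_j,e_{n+1})=B_k(e_i,e_j)$ for $i<j\le n$. The arrow map would be
\[
(k,A_k,[x])\mapsto \bigl((k,0),\,A_k\wedge e_{n+1}^\ast\bigr),
\]
and the exponential $\exp(x)$ of the remaining $[x]$ would be recorded as a constant 2-cell component. Since the arrows of $T\mathbb B_n^{F_1}$ carry $[x]$, which has no 1-arrow slot, the alternative is to place $x$ in the $B^2C^\infty$ part via a pseudofunctor whose compositor is trivial. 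I expect the cleaner choice to be the following: arrows $(k,A_k,[x])$ of the 2-group map to pairs (1-arrow $(k,A_k\wedge e^\ast_{n+1})$, with $[x]$ absorbed into the 2-cell data). Since $T\mathbb B_n^{F_1}$ has no nontrivial 2-cells, the natural option is to take the multiplicator arrows to be identities and the multiplicator 2-cells to be $\exp([x])$-type constants.

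Injectivity is then immediate on objects and arrows, because $B_k\mapsto B_k\wedge e^\ast_{n+1}$ and $A_k\mapsto A_k\wedge e^\ast_{n+1}$ are injective.

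The key check is that the 3-group structure of $T_2\mathbb B_{n+1}^{F_1}$, restricted to the image, reproduces \eqref{eq:mtbnf1}. The steps are as follows.
\begin{enumerate}
\item The contraction $\iota_{(k_2,0)}H_{B_{k_1}}=(\iota_{k_2}B_{k_1})\wedge e^\ast_{n+1}$ reproduces the term $A_{k_1}+A_{k_2}+\iota_{k_2}B_{k_1}$.
\item The interchange cell $\exp(-\iota_{k_2}B_{k_1}-B^{low}_{k_1}(v_2,k_2))$, evaluated with the contraction against the $e_{n+1}$-direction, reproduces the phase $-\iota_{v_2}A_{k_1}+B^{low}_{k_1}(v_2,k_2)$.
\item The associator 2-cell $\exp(-B^{low}(v_2,v_3))$ and the pentagonator $\exp(-H^{low}_{k_1}(v_2,v_3,v_4))$ vanish or restrict correctly. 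Since every $v_i$ lies in the $t=0$ slice and $H_{B_k}$ needs an $e_{n+1}$-slot, $H^{low}_{k_1}(v_2,v_3,v_4)=0$, so the pentagonator is trivial, as required for a 2-group.
\item The associator $[B^{low}_{k_1}(v_2,v_3)]$ of $T\mathbb B_n^{F_1}$ must match the associator 2-cell or arrow of $T_2\mathbb B_{n+1}^{F_1}$. I expect this matching to be where a sign or lower-triangular convention has to be adjusted, possibly by ordering $e_{n+1}$ first so that the $H^{low}$ conventions agree with $B^{low}$.
\end{enumerate}

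The same formulas, extended with $v\mapsto (v,0)$, $A\mapsto A\wedge e_{n+1}^\ast$ on the $\Lambda^2(\mathbb R^{n+1})^\ast$ component and $B_k\mapsto B_k\wedge e^\ast_{n+1}$, would define $T\mathbb D_n^{F_1}\to T_2\mathbb D_{n+1}^{F_1}$. Its compatibility with the $\delta(H_{k_1},v_2,k_2)$ terms and with the associator arrows $-H^{low}(\cdot,v_3,v_2)+\dots$ reduces to the same contraction identities. Setting $B_k=0$ forces $H_k=0$, so both maps restrict to the $F_2$ sub-3-groups. Commutativity of the square \eqref{eq:tdualspan} with the forgetful map of \cref{prop:t2dnwhet2bn} holds because both composites forget $v$-independent continuous data and send $(A_k,[x])$ to $\exp([x])\exp(A_k(\cdot))$.

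The main obstacle will be the bookkeeping of the $U(1)$ phases. The $[x]$ component has to land consistently in the 2-cell level, and the lower-triangular conventions $B^{low}$ versus $H^{low}$ have to be matched through the choice of position of the extra basis vector. I would settle this first by a direct comparison of the interchange and associator formulas before verifying the remaining axioms.
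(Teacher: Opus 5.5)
\textbf{Verdict: there is a genuine gap.} Your object map and the 2-form part of your arrow map agree with the paper: $B_k\mapsto B_k\wedge de_{n+1}$, $A_k\mapsto A_k\wedge de_{n+1}$, using $\iota_{k_2}H_{k_1}=(\iota_{k_2}B_{k_1})\wedge de_{n+1}$. You are also right that the pentagonator and associator 2-cells restrict to identities. The gap is the $\mathbb R/\mathbb Z$-component $[x]$ of arrows, which is where the actual content of the proof lies.

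\textbf{Why your treatment of $[x]$ fails.} A pseudofunctor sends arrows to arrows. Multiplicator 2-cells are indexed by an arrow together with an object, so they cannot ``store'' a parameter of a single arrow. Under your map (call it $\phi$), $[x]$ is simply lost, so the injectivity you call immediate fails on arrows. The structure maps do not match either.
\begin{itemize}
\item In $T_2\mathbb D_{n+1}^{F_1}$, $R_{g_2}$ shifts the 1-form of an arrow by $-\iota_{v_2}B_{k_1}=-A_{k_1}(v_2)\,de_{n+1}$, a real amount. But 2-cells only shift that 1-form by integral 1-forms. So if, as in your formulas, the arrow 1-form is left at zero and the multiplicator arrows are identities, there is no 2-cell at all between $\phi(\gamma_1g_2)$ and $\phi(\gamma_1)\phi(g_2)$.
\item In $T_2\mathbb B_{n+1}^{F_1}$, the interchange cell restricted to your image is $\exp(-A_{k_1}(k_2)\,de_{n+1}(\cdot))$, since $B^{low}_{k_1}(v_2,k_2)=0$ on the slice. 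With trivial compositor and constant multiplicator cells $\exp([x])$, intertwining the (trivial) interchange of the 2-group would force this non-constant function to be constant. That fails whenever $A_{k_1}(k_2)\neq 0$.
\end{itemize}
Your step~2 conflates this interchange cell with the phase $-\iota_{v_2}A_{k_1}+B^{low}_{k_1}(v_2,k_2)$. That phase is arrow-level data in $R_{g_2}(\gamma_1)$ and $L_{g_1}(\gamma_2)$, not an interchange cell.

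\textbf{The missing idea: resolve the circle.} Contrary to your remark, $[x]$ does have a 1-arrow slot in $T_2\mathbb D_{n+1}^{F_1}$, namely the real 1-form $A\in(\mathbb R^{n+1})^\ast$ on arrows. Since $\mathbb R\to\mathbb R/\mathbb Z$ has no continuous section, the paper proceeds as follows.
\begin{enumerate}
\item Replace $T\mathbb D_n^{F_1}$ by the equivalent 2-groupoid $T\mathbb D_n^{F_1,res}$. Its arrows are labelled by $x\in\mathbb R$, and its 2-cells $x\Rightarrow x+x_k$ by $x_k\in\mathbb Z$.
\item Lifting the multiplication from $\mathbb R/\mathbb Z$ to $\mathbb R$ breaks functoriality and interchange by integers. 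These failures form an induced Lie 3-group structure: compositor $B^{low}_{k_1}(k_2,k_2')$, interchange $-\iota_{k_2}A_{k_1}$, and associator arrows $(0,0,B^{low}_{k_1}(v_2,v_3))$.
\item This 3-group is literally the sub-3-group of $T_2\mathbb D_{n+1}^{F_1}$ on objects $(v,A\wedge de_{n+1},B_k\wedge de_{n+1})$, arrows $(k,A_k\wedge de_{n+1},x\,de_{n+1})$ and 2-cells $(x_k\,de_{n+1},0)$. Injectivity is then just this inclusion.
\end{enumerate}
Under this identification, the phases $-\iota_{v_2}A_{k_1}$ and $B^{low}_{k_1}(v_2,k_2)$ come from $-\iota_{v_2}B_{k_1}$ in $R$ and from $\delta(H_{k_1},v_2,k_2)$ in $L$.

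\textbf{Ordering of $e_{n+1}$.} It must be the \emph{last} basis vector, as in the paper. Only then do $\delta(H_{k_1},v_2,k_2)$, the compositor and the associator arrows reduce to $B^{low}_{k_1}(v_2,k_2)\,de_{n+1}$, $B^{low}_{k_1}(k_2,k_2')\,de_{n+1}$ and $B^{low}_{k_1}(v_2,v_3)\,de_{n+1}$. Ordering it first, as you suggest might be needed, reverses the arguments of $B^{low}$. For example it yields $-B^{low}_{k_1}(v_3,v_2)$, which does not agree with $B^{low}_{k_1}(v_2,v_3)$ even modulo $\mathbb Z$.
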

    \begin{proof}
        First, we resolve the $\mathbb R/\mathbb Z$ component of $T\mathbb D_{n}^{F_1}$ by considering the equivalent 2-groupoid $T\mathbb D_{n}^{F_1,res}$ with 2-cells of the form
        \begin{equation}
        \begin{tikzcd}[ampersand replacement = \& ]
                (v,A,B_k) \ar[r,bend left = 40, "{(k,A_k,x)}"{name=F}] \ar[r,bend right = 40, "{(k,A_k,x+x_k)}"{name=G},swap] \ar[Rightarrow,from=F,to=G,"{x_k}",swap] \& (v+k,A+A_k,B_k)
            \end{tikzcd},
    \end{equation}
    where $(v,A,B_k) \in \mathbb R^n \times (\mathbb R^n)^{\ast} \times  \Lambda^2 (\mathbb Z^n)^{\ast}$, $(k,A_k,x) \in \mathbb Z^n \times (\mathbb Z^n)^{\ast} \times \mathbb R$, and $x_k \in \mathbb Z$, composed in the obvious way. 
    
    Now we describe the 3-group structure that the Lie 2-group structure of $T\mathbb D_{n}^{F_1}$ induces on $T\mathbb D_{n}^{F_1,res}$. The fact that \eqref{eq:mtdnf1} respects composition relies crucially on the fact that arrows are labeled by $\mathbb R/\mathbb Z$ and not just $\mathbb R$; the failure of $m$ to preserve composition when lifting the formulas from $\mathbb R/\mathbb Z$ to $\mathbb R$ determines a compositor and an interchange cell for the induced product on $T\mathbb D_{n}^{F_1,res}$. The resulting Lie 3-group structure is summarized as follows. The multiplication functors are
    \begin{align}
        \begin{split}
            L_{(v_1,A_1,B_{k_1})}: T\mathbb D_n^{F_1,res} &\rightarrow T\mathbb D_n^{F_1,res} \\
            \begin{tikzcd}[ampersand replacement = \&,column sep=1ex ]
                {\scriptstyle (v_2,A_2,B_{k_2})} \ar[r,bend left = 60, "{(k_2,A_{k_2},x_2)}"{name=F}] \ar[r,bend right = 40, "{(k_2,B_{k_2},x_2+x_{k_2})}"{name=G},swap] \ar[Rightarrow,from=F,to=G,"{x_{k_2}}"{swap,pos=0.3,font=\tiny}] \& {\scriptstyle (v_2+k_2,A_2+A_{k_2},B_{k_2}) }
            \end{tikzcd} &\mapsto
            \begin{tikzcd}[ampersand replacement = \&,column sep = 0.5ex ]
                {\scriptstyle (v_1+v_2,A_1+A_2+\iota_{v_2}B_{k_1},B_{k_1}+B_{k_2}) }\ar[r,bend left = 40, "{(k_2,A_{k_2}+\iota_{k_2}B_{k_1},x_2+B_{k_1}^{low}(v_2,k_2))}"{name=F},pos=0.48] \ar[r,bend right = 40, "{(k_2,A_{k_2}+\iota_{k_2}B_{k_1},x_2+B_{k_1}^{low}(v_2,k_2)+x_{k_2})}"{name=G},swap,pos=0.48] \ar[Rightarrow,from=F,to=G,"{x_{k_2}}",swap,pos=0.3] \& {\scriptstyle (v_1+v_2+k_2,A_1+A_2+\iota_{v_2+k_2}B_{k_1}+A_{k_2},B_{k_1}+B_{k_2}) }
            \end{tikzcd} 
        \end{split},
    \end{align}
    \begin{align}
        \begin{split}
            R_{(v_2,A_2,B_{k_2})}: T\mathbb D_n^{F_1,res} &\rightarrow T\mathbb D_n^{F_1,res} \\
            \begin{tikzcd}[ampersand replacement = \&,column sep=1ex  ]
                {\scriptstyle (v_1,A_1,B_{k_1})} \ar[r,bend left = 60, "{(k_1,A_{k_1},x_1)}"{name=F}] \ar[r,bend right = 40, "{(k_1,A_{k_1},x_1+x_{k_1})}"{name=G},swap] \ar[Rightarrow,from=F,to=G,"{x_{k_1}}"{swap,pos=0.3,font=\tiny}] \& {\scriptstyle (v_1+k_1,A_1+A_{k_1},B_{k_1})}
            \end{tikzcd} &\mapsto
            \begin{tikzcd}[ampersand replacement = \&,column sep = 2ex ]
                {\scriptstyle (v_1+v_2,A_1+A_2+\iota_{v_2}B_{k_1},B_{k_1}+B_{k_2}) }\ar[r,bend left = 40, "{(k_1,A_{k_1},x_1-\iota_{v_2}A_{k_1})}"{name=F},pos=0.48] \ar[r,bend right = 40, "{(k_1,A_{k_1},x_1-\iota_{v_2}A_{k_1}+x_{k_1})}"{name=G},swap,pos=0.48] \ar[Rightarrow,from=F,to=G,"{x_{k_1}}"{swap,pos=0.3,font=\footnotesize}] \& {\scriptstyle (v_1+k_1+v_2,A_1+A_{k_1}+A_2 + \iota_{v_2}B_{k_1},B_{k_1}+B_{k_2}) }
            \end{tikzcd}
        \end{split}.
    \end{align}
    While $R_{(v_2,A_2,B_{k_2})}$ has trivial compositor, the compositor for $L_{(v_1,A_1,B_{k_1})}$ acting on 
    $$(v_2,A_2,B_{k_2}) \stackrel{(k_2,A_{k_2},x_2)}{\rightarrow} (v_2+k_2,A_2+A_{k_2},B_{k_2}) \stackrel{(k_2',A_{k_2}',x_2')}{\rightarrow} (v_2+k_2+k_2',A_2+A_{k_2}+A_{k_2}',B_{k_2})$$
    is
    \begin{equation}
    \begin{tikzcd}[column sep = 10ex]
            {\scriptstyle (v_1+v_2,A_1+A_2+\iota_{v_2}B_{k_1},B_{k_1}+B_{k_2})} \ar[rr,bend left=40,"{(k_2+k_2',A_{k_2}+A_{k_2}'+\iota_{k_2+k_2'}B_{k_1},x_2+x_2' + B^{low}_{k_1}(v_2,k_2+k_2'))}"{name=F},pos=0.5] 
              \ar[rr,bend right=10,"{(k_2+k_2',A_{k_2}+\iota_{k_2}B_{k_1}+A_{k_2}'+\iota_{k_2'}B_{k_1},x_2+x_2'+ B^{low}_{k_1}(v_2,k_2) +B^{low}_{k_1}(v_2+k_2,k_2') )}"{name=G},swap,pos=0.5]
             & & {\scriptstyle (v_1+v_2+k_2+k_2',A_1+A_2+\iota_{v_2+k_2+k_2'}B_{k_1}+A_{k_2}+A_{k_2}',B_{k_1}+B_{k_2})} \\ \ar[Rightarrow,from=F,to=G,"{B_{k_1}^{low}(k_2,k_2')}",shorten >=1.5pt,pos=0.4] &
        \end{tikzcd},
    \end{equation}
    The interchange 2-cells are defined for $(v_i,A_i,B_{k_i}) \stackrel{(k_i,A_{k_i},x_i)}{\rightarrow} (v_i+k_i,A_i+A_{k_i},B_{k_i})$, $i=1, \,2$ by 
    \begin{equation}
            \begin{tikzcd}[column sep=15ex,row sep = 10ex]
             {\scriptstyle (v_1+v_2,A_1+A_2+\iota_{v_2}B_{k_1},B_{k_1}+B_{k_2})} \ar[r,"{(k_1,A_{k_1},x_1-\iota_{v_2}A_{k_1})}",{name=U}] \ar[d,"{\substack{(k_2,A_{k_2}+\iota_{k_2}B_{k_1},\\ x_2+B^{low}_{k_1}(v_2,k_2))}}",swap] 
             & {\scriptstyle (v_1+k_1+v_2,A_1+A_{k_1}+A_2+\iota_{v_2}B_{k_1},B_{k_1}+B_{k_2}) } \ar[d,"{\substack{(k_2,A_{k_2}+\iota_{k_2}B_{k_1},\\ x_2+B^{low}_{k_1}(v_2,k_2))}}"] \ar[Rightarrow, dl,"{-\iota_{k_2}A_{k_1}}",swap,pos=0.5]\\
             {\scriptstyle (v_1+v_2+k_2,A_1+A_2+\iota_{v_2+k_2}B_{k_1}+A_{k_2},B_{k_1}+B_{k_2}) }\ar[r,"{(k_1,A_{k_1},x_1-\iota_{v_2+k_2}A_{k_1})}",{name=D},swap] 
             & {\scriptstyle (v_1+k_1+v_2+k_2,A_1+A_{k_1}+A_2+\iota_{v_2+k_2}B_{k_1}+A_{k_2},B_{k_1}+B_{k_2})}.
            \end{tikzcd}
            \end{equation}
    For $(v_i,A_i,B_{k_i})$, $i=1, \,2,\,3$, the associator arrows are
    \begin{equation}
    \begin{tikzcd}
        (v_1+v_2+v_3,A_1+A_2+\iota_{v_2}B_{k_1}+A_3 + \iota_{v_3}(B_{k_1}+B_{k_2}),B_{k_1}+B_{k_2}+B_{k_3}) \ar[d,"{(0,0,B^{low}_{k_1}(v_2,v_3))}"] \\
        (v_1+v_2+v_3,A_1+A_2+A_3+\iota_{v_3}B_{k_2} + \iota_{v_2+v_3}B_{k_1},B_{k_1}+B_{k_2}+B_{k_3})
    \end{tikzcd}
    \end{equation}
    and all other structure 2-cells are trivial. Now by comparison with \cref{sec:t2dnf1} we see that the Lie 3-group $T\mathbb D_n^{F_1,res}$ presented here agrees precisely with the Lie sub-3-group of $T_2\mathbb D_{n+1}^{F_1}$ generated by objects of the form $(v,A \wedge de_{n+1},B_k \wedge de_{n+1})$ with $(v,A,B_k) \in \mathbb R^n \times (\mathbb R^n)^{\ast} \times  \Lambda^2 (\mathbb Z^n)^{\ast}$, arrows of the form $(k,A_k \wedge de_{n+1}, x\,de_{n+1})$ with $(k,A_k,x) \in \mathbb Z^n \times (\mathbb Z^n)^{\ast} \times \mathbb R$, and 2-cells of the form $(x_kde_{n+1},0)$ with $x_k \in \mathbb Z$. Here $e_{n+1}$ is the last basis vector in $\mathbb R^{n+1}$ (according to the order chosen in \cref{not:low}) and we are embedding $\mathbb R^n \rightarrow \mathbb R^{n+1}$ setting $e_{n+1}=0$. This yields the homomorphism $T \mathbb D_n^{F_1} \rightarrow T_2 \mathbb D_{n+1}^{F_1}$; the homomorphism $T \mathbb B_n^{F_1} \rightarrow T_2 \mathbb B_{n+1}^{F_1}$ is similarly constructed.
    \end{proof}

\section{Conclusion and Outlook}\label{sec:conclusion}

The main contributions of this paper are the explicit presentation of the 2-category of gerbes over $\mathbb R^n/\mathbb Z^n$ in terms of representatives, tracing how these behave with respect to tensor product and pull-back by translations (Section \cref{sec:appgerbes}), and axiomatizing the corresponding algebraic structure as a 3-group (Sections \ref{sec:lie3}, \ref{sec:t2bnf1}). The homotopy equivalence (Section \ref{sec:t2dnf1}) and its dimensional reduction to T-duality (Section \ref{sec:dimred}) suggest the existence of a duality for higher-dimensional field theories with underlying non-homeomorphic 2-gerbes over torus fibrations. However, this remains outside of the scope of this paper, and will be explored in future work \cite{Gagliardo:2026inprep}.

\section*{Acknowledgements}

This paper is part of a larger project with Gianni Gagliardo and Christian S\"amann, which started during a stay at Heriot-Watt University. Discussions with them and support from Heriot-Watt University have therefore been very helpful for the results presented here. Part of the work was carried out during affiliation of the author with Instituto de Ciencias Matem\'aticas, and under financial support of Agencia Estatal de Investigaci\'on through grant PRE2019-089916.

\appendix

    \section{Checking that $T_2\mathbb B_n^{F_1}$ and $T_2\mathbb D_n^{F_1}$ are well-defined}\label{sec:welldefined}

    We include here the computations that show that the Lie 3-groups $T_2\mathbb B_n^{F_1}$ and $T_2\mathbb D_n^{F_1}$ from \cref{sec:t2bnf1} and \cref{sec:t2dnf1} are well-defined. All the equations of this section must be understood mod $\mathbb Z$. 
    
    We start with $T_2\mathbb B_n^{F_1}$. It is immediate to see that the multiplication functors $L_{(v_1,H_1)}$ and $R_{(v_2,H_2)}$ preserve the source, target, identity, and vertical composition maps. Naturality of the compositor, interchange, and associator 2-cells is also straightforward. Associativity of the compositors follows from
    \begin{align*}
        &H_1^{low}(v_2,k_2',k_2) - H_1^{low}(\cdot,k_2',k_2) + H_1^{low}(k_2',\cdot,k_2) + H_1^{low}(k_2,k_2',\cdot) \\
        & + H_1^{low}(v_2,k_2'',k_2+k_2') - H_1^{low}(\cdot,k_2'',k_2+k_2') + H_1^{low}(k_2'',\cdot,k_2+k_2') + H_1^{low}(k_2+k_2',k_2'',\cdot) \\
        &\quad = H_1^{low}(v_2,k_2'+k_2'',k_2) - H_1^{low}(\cdot,k_2'+k_2'',k_2) + H_1^{low}(k_2'+k_2'',\cdot,k_2) + H_1^{low}(k_2,k_2'+k_2'',\cdot)\\
        &+ H_1^{low}(v_2+k_2,k_2'',k_2') - H_1^{low}(\cdot,k_2'',k_2') + H_1^{low}(k_2'',\cdot,k_2') + H_1^{low}(k_2',k_2'',\cdot).
    \end{align*}
    Interchange cells respect composition of arrows because
    \begin{align*}
        -\iota_{k_2}B_1 - B_1^{low}(v_2,k_2)  -\iota_{k_2'}B_1 - B_1^{low}(v_2+k_2,k_2') &= -\iota_{k_2+k_2'}B_1 - B_1^{low}(v_2,k_2+k_2'),\\
        -\iota_{k_2}B_1 - B_1^{low}(v_2,k_2) -\iota_{k_2}B_1' - (B_1')^{low}(v_2,k_2) &= -\iota_{k_2}(B_1+B_1') - (B_1+B_1')^{low}(v_2,k_2).
    \end{align*}
    Associator cells respect composition of arrows because
    \begin{align*}
        &-B_1^{low}(v_2,v_3) - (B_1')^{low}(v_2,v_3) = -(B_1+B_1')^{low}(v_2,v_3),\\
        &-H_1^{low}(v_2+k_2,v_3,k_2) + H_1^{low}(v_2,k_2,v_3) -H_1^{low}(v_2+k_2+k_2',v_3,k_2') + H_1^{low}(v_2+k_2+k_2',k_2',v_3)\\
        &+ v_3^{\ast}(H_1^{low}(v_2,k_2',k_2) - H_1^{low}(\cdot,k_2',k_2) + H_1^{low}(k_2',\cdot,k_2) + H_1^{low}(k_2,k_2',\cdot))\\
        &= -H_1^{low}(v_2+k_2+k_2',v_3,k_2+k_2') + H_1^{low}(v_2,k_2+k_2',v_3)\\
        &+ H_1^{low}(v_2+v_3,k_2',k_2) - H_1^{low}(\cdot,k_2',k_2) + H_1^{low}(k_2',\cdot,k_2) + H_1^{low}(k_2,k_2',\cdot),\\
        &-H_1^{low}(v_2,v_3+k_3,k_3) - H_1^{low}(v_2,v_3+k_3+k_3',k_3')\\
        &+ (H_1+H_2)^{low}(v_3,k_3',k_3) - (H_1+H_2)^{low}(\cdot,k_3',k_3) + (H_1+H_2)^{low}(k_3',\cdot,k_3) + (H_1+H_2)^{low}(k_3,k_3',\cdot)\\
        &= -H_1^{low}(v_2,v_3+k_3+k_3',k_3+k_3') + H_2^{low}(v_3,k_3',k_3) - H_2^{low}(\cdot,k_3',k_3) + H_2^{low}(k_3',\cdot,k_3) + H_2^{low}(k_3,k_3',\cdot)\\
        &+ H_1^{low}(v_2+v_3,k_3',k_3) - H_1^{low}(\cdot,k_3',k_3) + H_1^{low}(k_3',\cdot,k_3) + H_1^{low}(k_3,k_3',\cdot).
    \end{align*}
    Associator cells respect interchange cells because
    \begin{align*}
        &v_3^{\ast}(-\iota_{k_2}B_1-B_1^{low}(v_2,k_2)) - (-\iota_{k_2}B_1-B_1^{low}(v_2+v_3,k_2)) = -B_1^{low}(v_2,v_3) + B_1^{low}(v_2+k_2,v_3),\\
        &-\iota_{k_3}(B_2+\iota_{k_2}H_1) - (B_2+\iota_{k_2}H_1)^{low}(v_3,k_3) - (-\iota_{k_3}B_2 - B_2^{low}(v_3,k_3)) \\
        &- (H_1^{low}(v_2+v_3,k_2,k_3) - H_1^{low}(\cdot,k_2,k_3) + H_1^{low}(k_2,\cdot,k_3) + H_1^{low}(k_3,k_2,\cdot))\\
        &+ (H_1^{low}(v_2+v_3,k_3,k_2) - H_1^{low}(\cdot,k_3,k_2) + H_1^{low}(k_3,\cdot,k_2) + H_1^{low}(k_2,k_3,\cdot)) \\
        &= -H_1^{low}(v_2+k_2,v_3,k_2) + H_1^{low}(v_2,k_2,v_3) - (-H_1^{low}(v_2+k_2,v_3+k_3,k_2) + H_1^{low}(v_2,k_2,v_3+k_3))\\
        &-H_1^{low}(v_2+k_2,v_3+k_3,k_3) + H_1^{low}(v_2,v_3+k_3,k_3),\\
        &-\iota_{k_3}B_1-B_1^{low}(v_3,k_3) - (-\iota_{k_3}B_1 - B_1^{low}(v_2+v_3,k_3)) = -B_1^{low}(v_2,v_3) + B_1^{low}(v_2,v_3+k_3).
    \end{align*}
    Pentagonator cells are natural because
    \begin{align*}
        &-B_1^{low}(v_2,v_3) - B_1^{low}(v_2+v_3,v_4) = - B_1^{low}(v_3,v_4) - B_1^{low}(v_2,v_3+v_4),\\
        &-H_1^{low}(v_2+k_2,v_3,k_2) + H_1^{low}(v_2,k_2,v_3) -H_1^{low}(v_2+k_2 + v_3,v_4,k_2) + H_1^{low}(v_2+v_3,k_2,v_4) \\
        &- B_2^{low}(v_3,v_4) - H_1^{low}(v_2+k_2,v_3,v_4)\\
        &= - (B_2+\iota_{k_2}H_1)^{low}(v_3,v_4) - H_1^{low}(v_2+k_2,v_3+v_4,k_2) + H_1^{low}(v_2,k_2,v_3+v_4)-H_1^{low}(v_2,v_3,v_4),\\
        &-H_1^{low}(v_2,v_3+k_3,k_3) - H_1^{low}(v_2+v_3+k_3,v_4,k_3) + H_1^{low}(v_2+v_3,k_3,v_4) - H_2^{low}(v_3+k_3,v_4,k_3)\\
        &+ H_2^{low}(v_3,k_3,v_4)  - H_1^{low}(v_2,v_3+k_3,v_4)\\
        &= -(H_1+H_2)^{low}(v_3+k_3,v_4,k_3) + (H_1+H_2)^{low}(v_3,k_3,v_4) - H_1^{low}(v_2,v_3+k_3+v_4,k_3) - H_1^{low}(v_2,v_3,v_4),\\
        &-H_1^{low}(v_2+v_3,v_4+k_4,k_4) - H_2^{low}(v_3,v_4+k_4,k_4) - H_1^{low}(v_2,v_3,v_4+k_4) \\
        &= - (H_1+H_2)^{low}(v_3,v_4+k_4,k_4) - H_1^{low}(v_2,v_3+v_4+k_4,k_4) - H_1^{low}(v_2,v_3,v_4).
    \end{align*}
    The pentagonator satisfies its cocycle condition because
    \begin{align*}
        H_1^{low}(v_2,v_3,v_4) + &H_1^{low}(v_2,v_3+v_4,v_5) + (H_1+H_2)^{low}(v_3,v_4,v_5) \\
        &= H_1^{low}(v_2,v_3,v_4+v_5) + H_1^{low}(v_2+v_3,v_4,v_5) + H_2^{low}(v_3,v_4,v_5).
    \end{align*}

    This concludes the computations that show that $T_2\mathbb B_n^{F_1}$ is well-defined. It also follows that $T_2\mathbb D_n^{F_1}$ is well-defined. This is because all the identities we have to check are equations between 2-cells, and in fact only the element in $U(1)$ has to agree on both sides, as the element in $(\mathbb Z^n)^{\ast}$ is determined by the source and target of the 2-cell. But since the formulas for the $U(1)$ element of the 2-cells defining $T_2\mathbb D_n^{F_1}$ coincide with those that define $T_2\mathbb B_n^{F_1}$, the same computations prove that $T_2\mathbb D_n^{F_1}$ is well-defined. Alternatively, one can also check that all the extra 2-cells appearing in the axioms of a 3-group when there are non-trivial associator arrows vanish in this case because the associator arrows have $k=0$, $B_k=0$.

\bibliographystyle{latexeu}
\bibliography{bigone}

\end{document}